\DeclareMathAlphabet{\mathbbb}{U}{bbold}{m}{n}
\DeclareMathAlphabet{\mathmanual}{U}{manfnt}{m}{n}
\newtheorem{thm}{Theorem}[section] \newtheorem{prop}[thm]{Proposition}
\newtheorem{lem}[thm]{Lemma} \newtheorem{cor}[thm]{Corollary}
\theoremstyle{definition} \newtheorem{dfn}[thm]{Definition}
\newtheorem{rem}[thm]{Remark} \newtheorem{ex}[thm]{Exercise}
\newtheorem{exam}[thm]{Example}
\newcommand{\calA}{\mathcal{A}} 
 \newcommand{\I}{\mathbbb{1}}
\newcommand{\Lef}{\mathbb{L}} \newcommand{\Z}{\mathbb{Z}} 
\newcommand{\calM}{\mathcal{M}} 
\newcommand{\calB}{\mathcal{B}} \newcommand{\bfD}{\mathbf{D}}
\newcommand{\barI}{\bar{\mathbbb{1}}} \newcommand{\bfC}{\mathbf{C}}
\newcommand{\qtc}{$\Q$-linear tensor category}
\newcommand{\qtcs}{$\Q$-linear tensor categories}
\newcommand{\qtf}{$\Q$-linear tensor functor}
\newcommand{\Sim}{\Sigma}
\newcommand{\reg}{\mathcal{O}}
\newcommand{\dleqn}{d_{\leq n}}
\newcommand{\calD}{\mathcal{D}}
\newcommand{\Spec}{\mathop{\mathrm{Spec}}}
\newcommand{\cone}{\mathop{\mathrm{cone}}}
\newcommand{\End}{\mathop{\mathrm{End}}}
\newcommand{\Ker}{\mathop{\mathrm{Ker}}}
\newcommand{\Img}{\mathop{\mathrm{Im}}}
\newcommand{\DM}{\mathbf{DM}} 
\newcommand{\A}{\mathbb{A}} \newcommand{\Proj}{\mathbb{P}}
\newcommand{\Q}{\mathbb{Q}}
 \newcommand{\D}{\mathbf{D}}
\newcommand{\tens}{\otimes} \newcommand{\isom}{\cong}
\DeclareFontFamily{U}{manual}{} 
\DeclareFontShape{U}{manual}{m}{n}{ <->  manfnt }{}
\begin{document}

\title{Schur functors and motives}

\author{Carlo Mazza\footnote{Partially supported
by INdAM ``Borse Di Studio Per L'Estero''}\\  
Universit\'e Paris 7 - Math\'ematiques, case 7012\\
\'Equipe Topologie et G\'eom\'etrie Alg\'ebriques\\
2 place Jussieu, 75251 Paris CEDEX 05\\
\texttt{mazza@math.jussieu.fr}}

\date{October 20th, 2004}

\maketitle

\begin{abstract}
In this article we study the class of Schur-finite motives, that is,
motives which are annihilated by a Schur functor. We compare this
notion to a similar one due to Kimura. In particular, we show that the
motive of any curve is Kimura-finite.  This last result has also been
obtained by V. Guletski\u{\i}. We conclude with an example by O'Sullivan
of a non Kimura-finite motive which is Schur-finite.
\end{abstract}

If $\lambda$ is a partition of $n$, the Schur functor $S_\lambda$
sends a motive $X$ to the direct summand of $X^{\tens n}$
determined by $\lambda$.  We say that $X$ is
Schur-finite if it is annihilated by some Schur functor.  This
definition is due to Deligne (\cite{delschur}) who related
Schur-finiteness to super-Tannankian categories. In this paper we
study Schur-finite objects in several categories, including motives.

Kimura and O'Sullivan have independently defined a stronger notion which we
will call Kimura-finiteness. Kimura showed in \cite{kimura} that if a
motive $M$ is Kimura-finite, then any endomorphism of $M$ is either
nilpotent or detected by cohomology. Kimura-finiteness
has been examined further by Guletski\u{\i} and Pedrini (see \cite{gp}
and \cite{gp2}) and by Andr\'e and Kahn (see \cite{yveskahn}).

In the first part of this paper we define Schur-finiteness and study
its properties in the setting of a \qtc. In particular, we investigate
its behavior with respect to tensor functors and triangles in the
derived category of an abelian category with tensor.  In the second
part we apply this formalism to the category of classical motives and
to Voevodsky's category $\mathbf{DM}^{eff,-}_{Nis}(k,\Q)$.  We
conclude by showing that the motives of all curves are Kimura-finite.
This last result has also been obtained by Guletski\u{\i} in
\cite{gul}. Using a result by O'Sullivan, we produce a motive which is
Schur-finite but not Kimura-finite.

The author would like to thank Chuck Weibel for everything he did. Much
credit is due to Pierre Deligne, who introduced the definition. We are
also grateful to Claudio Pedrini, Luca Barbieri Viale, and Bruno Kahn
for precious conversations and their comments. The author would like also to thank
the Instituto de Matem\'aticas of UNAM in Morelia, Mexico for hospitality while this
manuscript was prepared.

\section{Definitions and basic properties}

In this paper, $\calA$ will always be a \qtc, in the following sense. 

\begin{dfn}
We say that a symmetric monoidal category $\calA$ is a \textbf{{\qtc}} if it
satisfies all of the following:
\begin{enumerate}
\item $\calA$ is additive, pseudo-abelian, and $\Q$-linear,
\item $\tens$ is $\Q$-bilinear.
\end{enumerate}

Let $F:\calA\to \calB$ be a functor between two \qtcs. We say that $F$
is a \textbf{{\qtf}} if it is $\Q$-linear and it respects the symmetric
monoidal structures.
\end{dfn}

Recall that for every partition $\lambda$ of $n$ there is an idempotent
$c_\lambda\in \Q[\Sim_n]$ called the Young symmetrizer. If $\Sim_n$ acts
on an object $A$ of $\calA$, then there is an algebra map $\Q[\Sim_n]\to
\End( A)$. We will often confuse the element of $\Q[\Sim_n]$ with the
induced endomorphism of a representation.  Since
$c_\lambda^2=c_\lambda$ and $\calA$ is pseudo-abelian, $c_\lambda(A)$
is a direct summand of $A$.

\begin{dfn}
Let $\calA$ be a \qtc. The symmetric group $\Sim_n$ acts on $X^{\tens
  n}$ for every $X$. For every partition $\lambda$ of $n>0$, we define
$S_\lambda(X)=c_\lambda(X^{\tens n})$.  This assignment makes
$S_\lambda(-)$ into a functor, which we call the \textbf{Schur
  functor} of $\lambda$. In particular, we define
$Sym^n(X)=S_{(n)}(X)$ and $\wedge^n X=S_{(1,\ldots,1)}(X)$.
\end{dfn}

The following definitions are directly inspired by \cite{delschur}
and \cite{kimura}.

\begin{dfn}
An object $X$ of $\calA$ is called \textbf{Schur-finite} if there
is an integer $n$ and a partition $\lambda$ of $n$ such that $X$
is annihilated by the Schur functor of $\lambda$, i.e.,
$S_\lambda(X)=0$. It follows from \ref{sfgen} below
that $S_\mu(X)=0$ for all $\lambda\subseteq \mu$.

An object $X$ of $\calA$ is called \textbf{even} (respectively, \textbf{odd})
if there is an $n$ so that $\Lambda^n X=0$ (respectively, $Sym^n X=0$).
An object $X$ is called \textbf{Kimura-finite} if there is a
decomposition $X=X_+\oplus X_-$ such that $X_+$ is even and $X_-$ 
is odd. 

We will say that the category $\calA$ is Schur-finite (respectively, Kimura-finite)
if all objects of $\calA$ are Schur-finite (respectively, Kimura-finite).
\end{dfn}

Most of the basic results about Schur functors can be proved in the
setting of \qtcs. Let $H$ be a subgroup of $G$.
Then for any irreducible representation $W_i$ of $H$
and $V_j$ of $G$, we define $[res(V_j):W_i]$ to be the multiplicity
of $W_j$ inside the restriction $res(V_j)$ of $V_j$ to $H$.
In particular $res(V_j)=\oplus [res(V_j):W_i]W_i$. By Frobenius reciprocity,
the coefficient of $W_i$ is the same as $[Ind_H^G(W_i):V_j]$. 

Let $n_i$ be integers so that
$n_1+\ldots+n_r=n$ and consider $\Sim_{n_1}\times\ldots\times \Sim_{n_r}\subseteq
\Sim_n$. Let $\mu_i$ be a partition of $n_i$, and let $V_{\mu_i}$
be the corresponding irreducible representation. For every $\lambda$ partition of $n$,
and let $V_\lambda$ be the corresponding representation.
Then we define $[\lambda:\mu_1,\ldots,\mu_r]=[res (V_\lambda):
V_{\mu_1}\tens \ldots \tens V_{\mu_r}]=[Ind (V_{\mu_1}\tens \ldots \tens V_{\mu_r}):V_\lambda]$.

\begin{prop}\label{sfgen}
(See \cite[1.6-1.8]{delschur}.) Let $X$ and $Y$ be objects of $\calA$, then:
\begin{enumerate}
\item $S_\mu(X)\tens S_\nu(X)\isom \oplus [\lambda{:}\mu,\nu]\;S_\lambda(X)$,
where the sum is taken over all partitions $\lambda$ of $n=|\mu|+\nu|$;
\item if $S_\lambda(X)=0$, then $S_\mu(X)=0$ for all $\lambda\subseteq \mu$;
\item $S_\lambda(X\oplus Y)\isom \oplus 
[\lambda{:}\mu,\nu]\left(S_\mu(X)\tens S_\nu(Y)\right)$ where $|\mu|+|\nu|=|\lambda|$;
\item $S_\lambda(X\tens Y)\isom \oplus
[V_\mu\tens V_\nu{:}V_\lambda]\left(S_\mu(X)\tens S_\nu(Y)\right)$
where $|\mu|=|\nu|=|\lambda|$.
\end{enumerate}
\end{prop}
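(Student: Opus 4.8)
The plan is to reduce all four assertions, following Deligne, to character theory of the symmetric groups over $\Q$, checking only that his arguments never leave the setting of a \qtc{}. The structural ingredient I need is an \emph{isotypic decomposition}. Let a finite group $G$ act on an object $A$ of $\calA$, i.e.\ let there be given an algebra homomorphism $\Q[G]\to\End(A)$. Since $\Q[G]$ is split semisimple --- automatic for $G$ a product of symmetric groups, by Maschke's theorem and the fact that Specht modules remain irreducible over $\Q$ --- we have $\Q[G]\isom\prod_i M_{d_i}(\Q)$; the block idempotents $e_i$ and a choice of primitive idempotent $c_i\in e_i\Q[G]$ in each block (the Young symmetrizers when $G=\Sim_n$, with $V_i$ the corresponding irreducible) are realised, by pseudo-abelianness, as direct summands of $A$. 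Putting $M^G_{V_i}(A):=c_i(A)$ --- the stand-in for the multiplicity space $\Hom_G(V_i,A)$, which need not exist as $\calA$ is only pseudo-abelian --- one gets a decomposition, natural in the $G$-object $A$,
\[
A\ \isom\ \bigoplus_{V}\ V\tens_\Q M^G_{V}(A),
\]
summed over irreducible $\Q$-representations $V$ of $G$, where $V\tens_\Q M$ is shorthand for the direct sum of $\dim_\Q V$ copies of $M$ with $G$ acting through its representation on $V$; re-applying $c_i$ shows the $M^G_V(A)$ are determined by $A$ up to isomorphism. Decomposing $res(V)=\bigoplus_W[res(V){:}W]\,W$ over irreducibles $W$ of a subgroup $H\leq G$, and likewise $Ind_H^G(W)=\bigoplus_V[Ind_H^G(W){:}V]\,V$, and reading off summand by summand then gives
\[
M^H_W(res(A))\isom\bigoplus_V[res(V){:}W]\,M^G_V(A),
\qquad
M^G_V(Ind_H^G(B))\isom\bigoplus_W[Ind_H^G(W){:}V]\,M^H_W(B),
\]
the latter using the identity $Ind_H^G(W\tens_\Q M)\isom(Ind_H^G W)\tens_\Q M$, immediate from the definitions.

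Granting this, each part is bookkeeping. The symmetry isomorphisms of $\tens$ make $X^{\tens n}$ a $\Sim_n$-object, and by definition $S_\lambda(X)=c_\lambda(X^{\tens n})=M^{\Sim_n}_{V_\lambda}(X^{\tens n})$, so $X^{\tens n}\isom\bigoplus_{|\lambda|=n}V_\lambda\tens_\Q S_\lambda(X)$. For (1), with $n=|\mu|+|\nu|$ one has $S_\mu(X)\tens S_\nu(X)=(c_\mu\tens c_\nu)(X^{\tens n})=M^{\Sim_{|\mu|}\times\Sim_{|\nu|}}_{V_\mu\tens V_\nu}(res(X^{\tens n}))$, which by $res(V_\lambda)=\bigoplus_{\mu,\nu}[\lambda{:}\mu,\nu]\,V_\mu\tens V_\nu$ and the restriction compatibility equals $\bigoplus_\lambda[\lambda{:}\mu,\nu]\,S_\lambda(X)$. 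For (3), grouping the summands of $(X\oplus Y)^{\tens n}$ according to the number $m$ of tensor slots equal to $X$ yields a $\Sim_n$-equivariant isomorphism $(X\oplus Y)^{\tens n}\isom\bigoplus_{m=0}^{n}Ind^{\Sim_n}_{\Sim_m\times\Sim_{n-m}}\big(X^{\tens m}\tens Y^{\tens(n-m)}\big)$, to which one applies $c_\lambda$, the induction compatibility, and the decompositions of the two factors. For (4), $(X\tens Y)^{\tens n}\isom X^{\tens n}\tens Y^{\tens n}$ with the \emph{diagonal} $\Sim_n$-action, and substituting the isotypic decompositions of the two factors turns $c_\lambda$ into the internal multiplicity $[V_\mu\tens V_\nu{:}V_\lambda]$. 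For (2), $S_\lambda(X)\tens X^{\tens r}=c_\lambda(X^{\tens(|\lambda|+r)})$ with $c_\lambda$ acting through $\Sim_{|\lambda|}\subseteq\Sim_{|\lambda|+r}$; restricting the isotypic decomposition of $X^{\tens(|\lambda|+r)}$ to $\Sim_{|\lambda|}$, the coefficient of $V_\lambda$ in $res(V_\mu)$ is the number of standard Young tableaux of skew shape $\mu/\lambda$, which is positive exactly when $\lambda\subseteq\mu$; hence $S_\lambda(X)=0$ forces $S_\mu(X)=0$ for every $\mu\supseteq\lambda$ (let $r$ vary).

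The only genuinely non-formal point is the isotypic decomposition, and even there the content is just Maschke's theorem plus the Karoubian hypothesis; the care needed is in \emph{functoriality} --- producing the decomposition and the objects $M^G_V(A)$ from the fixed idempotents $c_i,e_i\in\Q[G]$, so that ``$res$'' and ``$Ind$'' on $G$-objects translate into genuine direct-sum decompositions in $\calA$ whose pieces match the character identities above. Everything downstream --- the branching rule, Frobenius reciprocity, the tableau count for skew shapes, and keeping track of which Young subgroup of $\Sim_n$ acts on which tensor factors --- is classical and independent of $\calA$. I therefore expect the write-up to be short: state the decomposition, note uniqueness of the multiplicity objects, and transcribe the four computations of Deligne \cite[1.6--1.8]{delschur} into this language.
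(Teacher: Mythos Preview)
The paper does not supply its own proof of this proposition: it is stated with the parenthetical ``(See \cite[1.6--1.8]{delschur})'' and no argument, and the text moves directly to the corollary. Your proposal is a correct and faithful reconstruction of Deligne's argument from that reference --- the isotypic decomposition via the split semisimplicity of $\Q[\Sim_n]$, the restriction/induction compatibilities, and the four bookkeeping steps --- so it is exactly the approach the paper is deferring to, only spelled out rather than cited.
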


\begin{cor}\label{kfsf}
Kimura-finiteness and Schur-finiteness are closed under direct sums and
tensor products. Moreover, every Kimura-finite object is Schur-finite.
\end{cor}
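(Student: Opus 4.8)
The plan is to deduce everything from Proposition~\ref{sfgen}. It is convenient to rephrase Schur-finiteness in terms of hooks. For a partition $\lambda$ write $\lambda\in H(a,b)$ when $\lambda_{a+1}\le b$ (the $(a,b)$-hook), and for $X\in\calA$ set $\partial(X)=\{\lambda\vdash n,\ n>0 : S_\lambda(X)\ne 0\}$. By \ref{sfgen}(2), $\partial(X)$ is a lower set for inclusion of partitions, and a lower set omits the rectangle $(q^p)$ exactly when it is contained in $H(p-1,q-1)$. Hence $X$ is Schur-finite if and only if $\partial(X)\subseteq H(a,b)$ for some $a,b\ge 0$: given $S_\lambda(X)=0$ take $p>\ell(\lambda)$ and $q>\lambda_1$, so that $\lambda\subseteq(q^p)$ and $S_{(q^p)}(X)=0$ by \ref{sfgen}(2); conversely $S_{((b+1)^{a+1})}(X)=0$ once $\partial(X)\subseteq H(a,b)$. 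So it suffices to bound $\partial(X\oplus Y)$ and $\partial(X\tens Y)$ by a hook whenever $\partial(X)$ and $\partial(Y)$ are.

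Closure under $\oplus$ is then a formal counting argument. Enlarging as above we may assume $S_{(q^p)}(X)=S_{(q^p)}(Y)=0$. Fix a large integer $N$ and apply \ref{sfgen}(3) with $\lambda=(N^N)$: $S_{(N^N)}(X\oplus Y)\isom\bigoplus[(N^N){:}\mu,\nu]\,S_\mu(X)\tens S_\nu(Y)$, the sum over $|\mu|+|\nu|=N^2$. A summand can be non-zero only if $[(N^N){:}\mu,\nu]\ne0$, which forces $\mu,\nu\subseteq(N^N)$, and $S_\mu(X),S_\nu(Y)\ne 0$, which by \ref{sfgen}(2) forces $(q^p)\not\subseteq\mu$ and $(q^p)\not\subseteq\nu$, i.e.\ $\mu_p,\nu_p\le q-1$. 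A partition contained in $(N^N)$ with its $p$-th part $\le q-1$ has size at most $(p-1)N+(N-p+1)(q-1)$, so $|\mu|+|\nu|\le 2(p-1)N+2(N-p+1)(q-1)$, which is $<N^2$ as soon as $N$ is large. Hence no summand survives, $S_{(N^N)}(X\oplus Y)=0$, and $X\oplus Y$ is Schur-finite.

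Closure under $\tens$ is the substantive point, and I expect it to be the main obstacle: the estimate above fails because in \ref{sfgen}(4) the summation partitions $\mu,\nu$ range over all partitions of $|\lambda|$ with no containment constraint, so their sizes are not controlled. I would instead transport the problem to the category of $\Q$-super vector spaces. Enlarge so that $\partial(X)\subseteq H(c,d)$ and $\partial(Y)\subseteq H(e,f)$. By \ref{sfgen}(4), a partition $\lambda$ lies in $\partial(X\tens Y)$ only if the Kronecker coefficient $[V_\mu\tens V_\nu{:}V_\lambda]$ is non-zero for some $\mu\in\partial(X)\subseteq H(c,d)$ and $\nu\in\partial(Y)\subseteq H(e,f)$. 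So it suffices to prove the numerical statement: if $[V_\mu\tens V_\nu{:}V_\lambda]\ne 0$, $\mu\in H(c,d)$ and $\nu\in H(e,f)$, then $\lambda\in H(ce+df,\,cf+de)$. For this, run \ref{sfgen}(4) in super vector spaces with $U=\Q^{c|d}$ and $U'=\Q^{e|f}$: the hypotheses give $S_\mu(U)\ne 0$ and $S_\nu(U')\ne 0$, hence $S_\mu(U)\tens S_\nu(U')\ne 0$, whence $S_\lambda(U\tens U')\ne 0$; since $U\tens U'=\Q^{(ce+df)\,|\,(cf+de)}$, the Berele--Regev vanishing criterion ($S_\lambda(\Q^{a|b})=0$ iff $\lambda\notin H(a,b)$; see \cite{delschur}) forces $\lambda\in H(ce+df,cf+de)$. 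Thus $\partial(X\tens Y)\subseteq H(ce+df,cf+de)$. (One could alternatively note that $X\tens Y$ is a direct summand of $(X\oplus Y)^{\tens 2}$, reducing to Schur-finiteness of $Z^{\tens 2}$ for $Z$ Schur-finite via \ref{sfgen}(3) and the fact that summands of Schur-finite objects are Schur-finite; but this reduction does not appear easier, and I expect the Berele--Regev input, or some equivalent piece of representation theory, to be unavoidable.)

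For Kimura-finiteness: that a direct sum of even (resp.\ odd) objects is even (resp.\ odd), and that even${}\tens{}$even and odd${}\tens{}$odd are even while even${}\tens{}$odd is odd, each follow by an elementary pigeonhole argument from the expansions of $\wedge^N$ and $Sym^N$ of a sum or product supplied by \ref{sfgen}(3)--(4); for instance $\wedge^N(A\tens B)\isom\bigoplus_{\nu\vdash N}S_{\nu'}(A)\tens S_\nu(B)$, which vanishes for $N>ab$ when $\wedge^{a+1}(A)=\wedge^{b+1}(B)=0$, since a non-zero summand would require $\nu_1\le a$ and $\ell(\nu)\le b$. Decomposing $X\oplus Y$ and $X\tens Y$ into even and odd parts then gives closure of Kimura-finiteness under $\oplus$ and $\tens$. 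Finally, every Kimura-finite object is Schur-finite: if $X=X_+\oplus X_-$ with $X_+$ even and $X_-$ odd, then $X_+$ and $X_-$ are Schur-finite (being annihilated by $S_{(1^{a+1})}$ and $S_{(b+1)}$ respectively), hence so is $X$ by the $\oplus$-case already treated. Throughout, the only genuinely non-formal ingredient is the Berele--Regev criterion used in the boxed numerical claim; everything else is bookkeeping with Proposition~\ref{sfgen} (the degenerate case $\I=0$, where $\calA$ is trivial, being immediate).
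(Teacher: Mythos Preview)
Your argument is correct. The paper's own proof is a one-line citation: closure of both notions under $\oplus$ and $\tens$ ``by \ref{sfgen}'' (with an additional pointer to \cite[5.11]{kimura} for the Kimura-finite case), followed by the implication Kimura $\Rightarrow$ Schur via closure under $\oplus$, exactly as you conclude. What you have done is unpack what ``by \ref{sfgen}'' actually entails: for $\oplus$ a clean pigeonhole with large rectangular $\lambda$, and for $\tens$ the genuinely non-formal step of bounding the Kronecker support via the Berele--Regev vanishing criterion in $Vect^\pm_\Q$. You are right that this last ingredient (or something equivalent) is unavoidable, and the paper simply delegates it to Deligne's \cite{delschur}, where the same closure statements are proved with essentially the same super-vector-space input. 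So your route agrees with the paper's in outline but makes explicit the one piece of representation theory the paper leaves inside the citation; the explicit hook bound $\partial(X\tens Y)\subseteq H(ce+df,\,cf+de)$ you extract is a small bonus not stated in the paper.
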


\begin{proof}
Kimura-finiteness and Schur-finiteness are preserved by $\oplus$ and $\tens$
by \ref{sfgen}. For Kimura-finiteness, this was already proven in \cite[5.11]{kimura}.

Now every Kimura-finite $X$ is a direct sum of two
Schur-finite objects. Since Schur-finiteness is closed under direct sums, $X$
is Schur-finite as well.
\end{proof}

Recall that if $\calA$ is a \qtc, then so is the category
$\calA^\pm$ of super-objects of $\calA$.

\begin{lem}\label{apm}
If $\calA$ is Schur-finite (respectively, Kimura-finite),
then the category $\calA^\pm$ is also Schur-finite (respectively, Kimura-finite).
\end{lem}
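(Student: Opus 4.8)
The plan is to work with the concrete model of $\calA^{\pm}$: an object is a $\Z/2$-graded object of $\calA$, so it splits canonically as $X=X_+\oplus X_-$ with $X_+$ (resp.\ $X_-$) an object of $\calA$ placed in degree $0$ (resp.\ degree $1$), and the symmetry constraint is the one of $\calA$ twisted by the Koszul sign rule. Write $\iota\colon\calA\to\calA^{\pm}$ for the fully faithful {\qtf} placing an object in degree $0$, and for $A$ in $\calA$ let $A^-=\iota(A)\tens\I^-$ be the same object put in odd degree, where $\I^-$ is the unit in odd degree (an invertible object with $\I^-\tens\I^-\isom\I$). Since a $\Q$-linear additive functor between pseudo-abelian categories preserves splittings of idempotents, and $\iota$ preserves $\tens$ and the symmetry, $\iota$ commutes with every Schur functor: $S_\lambda^{\calA^{\pm}}(\iota A)\isom\iota\bigl(S_\lambda^{\calA}(A)\bigr)$.

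The one computation I need is the effect of a Schur functor on an object in odd degree. On $(A^-)^{\tens n}$ every transposition of $\Sim_n$ acts as $-1$ times the corresponding operator on $A^{\tens n}$ coming from the symmetry of $\calA$, so an arbitrary $\sigma\in\Sim_n$ acts as $\mathrm{sgn}(\sigma)$ times its action on $A^{\tens n}$. Hence the Young symmetrizer $c_\lambda$ acts on $(A^-)^{\tens n}$ exactly as the image of $c_\lambda$ under the sign automorphism $\sigma\mapsto\mathrm{sgn}(\sigma)\,\sigma$ of $\Q[\Sim_n]$ acts on $A^{\tens n}$; that image is conjugate in $\Q[\Sim_n]$ to the Young symmetrizer $c_{\lambda^t}$ of the conjugate partition, hence cuts out an isomorphic direct summand. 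Thus $S_\lambda^{\calA^{\pm}}(A^-)\isom S_{\lambda^t}^{\calA}(A)$ (placed in degree $|\lambda|\bmod 2$); in particular $Sym^n$ and $\wedge^n$ are interchanged on odd objects. I expect this transposition identity — or really just the fact that we only care about whether a direct summand vanishes — to be the only point that is not purely formal.

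Granting this, the Schur-finite case is immediate: for $X=X_+\oplus X_-$ in $\calA^{\pm}$, choose partitions $\lambda,\mu$ with $S_\lambda^{\calA}(X_+)=0$ and $S_\mu^{\calA}(\underline{X_-})=0$, where $\underline{X_-}$ is the underlying object of $\calA$; then $S_\lambda^{\calA^{\pm}}(X_+)=0$ and $S_{\mu^t}^{\calA^{\pm}}(X_-)=0$, so $X$ is a direct sum of two Schur-finite objects of $\calA^{\pm}$, hence Schur-finite by \ref{kfsf}. For the Kimura-finite case, write $X_+=\iota(A)$ and $X_-=B^-$, and take Kimura decompositions $A=A_+\oplus A_-$, $B=B_+\oplus B_-$ in $\calA$. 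Then $\iota(A_+)$ and $(B_-)^-$ are even in $\calA^{\pm}$ while $\iota(A_-)$ and $(B_+)^-$ are odd in $\calA^{\pm}$ — the $Sym/\wedge$ swap of the previous paragraph is precisely what makes the odd half of $B$ contribute an even object and vice versa. Regrouping, $X=\bigl(\iota(A_+)\oplus(B_-)^-\bigr)\oplus\bigl(\iota(A_-)\oplus(B_+)^-\bigr)$ exhibits $X$ as even $\oplus$ odd, using that a direct sum of two even (resp.\ two odd) objects is again even (resp.\ odd), which follows from the Cauchy-type formula \ref{sfgen}(3) together with \ref{sfgen}(2). This completes the proof.
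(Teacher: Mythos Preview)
Your argument is correct and follows essentially the same route as the paper's proof: both reduce to the decomposition $(V,W)=(V,0)\oplus(0,W)$, observe that $S_\lambda$ on the even slot agrees with $S_\lambda$ in $\calA$ while on the odd slot it becomes $S_{\lambda'}$ (the paper's notation for the transpose), and then conclude by closure of Schur-/Kimura-finiteness under direct sums (\ref{kfsf}). Your write-up is simply more explicit about why the transposition occurs (the sign automorphism of $\Q[\Sim_n]$) and about the four-term regrouping in the Kimura case, which the paper leaves implicit.
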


\begin{proof}
An object of $\calA^\pm$ is a pair $(V,W)$. Let us write $\barI$ for
$(0,\I)$. If $\lambda$ is a partition of $n$, then
$S_\lambda(V,0)=(S_\lambda V,0)$. However $S_\lambda(0,V)=
\bar\I^{n}\tens(S_{\lambda'}V,0)$ where $\lambda'$ is the transpose of
$\lambda$. But Schur-finiteness (respectively, Kimura-finiteness) is
closed under direct sums by \ref{kfsf}, and therefore we have the
statement.
\end{proof}

\begin{exam}\label{svs}
The category $Vect^\pm_k$ of finite-dimensional super-vector spaces
over a field $k$ of characteristic zero is a \qtc.
By \ref{apm}, every object is Kimura-finite. 
In fact, every vector space is (non-canonically)
isomorphic to $\I^p\oplus \bar\I^q$, 
and $S_\lambda(\I^p\oplus \bar\I^q)=0$ if and only if
the partition $\lambda$ contains the rectangle with $p+1$ rows and 
$q+1$ columns (see \cite[1.9]{delschur}).
\end{exam}

\begin{ex}\label{vb}
Let $X$ be a scheme and let $\mathbf{Vb}_X$ be the category of vector
bundles over $X$. Each vector bundle is even, so $(\mathbf{Vb}_X)^\pm$
is Kimura-finite by \ref{apm}.
\end{ex}

Before we proceed we need a technical lemma.

\begin{lem}\label{tech}
Let $\calA$ be an abelian $\Q$-linear category, and consider a short exact sequence
\[ 0\to A\to B\to C\to 0\]
of $\Sim_n$-equivariant maps. Then we have a short exact sequence
\[ 0\to c_\lambda(A)\to c_\lambda(B)\to c_\lambda(C)\to 0.\]
\end{lem}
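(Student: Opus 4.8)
The plan is to exploit the fact that $c_\lambda$ is an idempotent in the group algebra $\Q[\Sim_n]$, and that applying an idempotent endomorphism is an exact operation in any $\Q$-linear abelian category. Concretely, since $\calA$ is abelian and $\Q$-linear, for any object $M$ carrying a $\Sim_n$-action the element $c_\lambda$ induces an endomorphism of $M$ with $c_\lambda^2 = c_\lambda$, so $M$ splits canonically as $M \isom c_\lambda(M) \oplus (1-c_\lambda)(M)$, where $c_\lambda(M) = \Img(c_\lambda \colon M \to M)$ and this image exists because $\calA$ is abelian. The key point is that this decomposition is \emph{natural} in $M$ with respect to $\Sim_n$-equivariant maps: if $f\colon M \to N$ commutes with the $\Sim_n$-actions, then $f \circ c_\lambda = c_\lambda \circ f$, so $f$ respects the two summands and restricts to $c_\lambda(f)\colon c_\lambda(M)\to c_\lambda(N)$.

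First I would record that the given short exact sequence $0\to A\to B\to C\to 0$ splits, as a sequence of $\Q[\Sim_n]$-modules in $\calA$, into the direct sum of the three sequences obtained by applying $c_\lambda$ and $1-c_\lambda$; more precisely, the functor $M \mapsto c_\lambda(M)$ is a direct summand of the identity functor on the category of $\Sim_n$-objects in $\calA$. Next I would invoke the elementary fact that any direct summand of an exact functor is exact: additive functors preserve finite direct sums, so if $\mathrm{id} \isom (c_\lambda(-)) \oplus ((1-c_\lambda)(-))$ as functors and $\mathrm{id}$ is exact, then applying $c_\lambda(-)$ to $0\to A\to B\to C\to 0$ and to its complementary pieces, and comparing with the exactness of the total sequence, forces $0\to c_\lambda(A)\to c_\lambda(B)\to c_\lambda(C)\to 0$ to be exact. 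Equivalently and more hands-on: $c_\lambda(A)$ is a subobject of $A$ hence of $B$, the map $c_\lambda(A)\to c_\lambda(B)$ is mono; the composite $c_\lambda(B)\to c_\lambda(C)$ is epi because $B\to C$ is epi and $c_\lambda$ is a projector compatible with it; and exactness in the middle follows by a diagram chase using that $\Ker(B\to C) = A$ together with the projector splitting, or simply from the snake lemma applied to the three-by-three diagram comparing the original sequence, its $c_\lambda$-part, and its $(1-c_\lambda)$-part.

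I expect the only genuine subtlety — rather than a real obstacle — to be bookkeeping: one must make sure the morphisms $A\to B$ and $B\to C$ are $\Sim_n$-equivariant (which is hypothesized) so that $c_\lambda$ acts functorially, and one must be careful that "direct summand of an exact sequence" is interpreted correctly, i.e. that the decomposition into $c_\lambda$-part and $(1-c_\lambda)$-part is compatible across the whole sequence. Since the group algebra $\Q[\Sim_n]$ is semisimple and $c_\lambda$ is central up to conjugacy (or just: it is a genuine idempotent), there is no characteristic obstruction; the hypothesis that $\calA$ is $\Q$-linear and abelian is exactly what makes both the idempotent split and the splitting functorial. No delicate limit or finiteness argument is needed, so the proof should be short.
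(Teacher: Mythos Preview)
Your proposal is correct and is essentially the same as the paper's proof: the paper simply observes that the Young symmetrizer is an idempotent subfunctor of the identity and then says a diagram chase yields the result. You have spelled out in detail exactly that argument (direct summand of the identity functor is exact), so there is nothing to add.
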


\begin{proof}
Since the Young symmetrizer is an idempotent subfunctor
of the identity, a diagram chase yields the result. 
\end{proof}

\begin{lem}\label{ab}
(Cf. \cite[1.19]{delschur}.) Let $\calA$ be an abelian {\qtc} where the tensor
is right exact. Suppose we have a short exact sequence
$0\to A\to B\to C\to 0$, and that $B$ is Schur-finite (respectively
even, respectively odd). Then $C$ is Schur-finite (respectively even,
respectively odd). Moreover, if $A$ and $B$ are flat objects with respect 
to the tensor product, then $A$ is Schur-finite (respectively even, respectively odd).
\end{lem}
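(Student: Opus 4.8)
The plan is to reduce everything to the behaviour of the Young symmetrizer under tensoring with the short exact sequence, exactly as in the classical Künneth-type arguments. For the first assertion, suppose $B$ is Schur-finite, say $S_\lambda(B)=0$ for $\lambda$ a partition of $n$. Tensoring the sequence $0\to A\to B\to C\to 0$ with itself $n$ times and using that $\tens$ is right exact, I obtain that $B^{\tens n}\to C^{\tens n}$ is an epimorphism of $\Sim_n$-equivariant objects (the $\Sim_n$-action on $C^{\tens n}$ being the natural one, and the map being $\Sim_n$-equivariant since it is built from the projection $B\to C$ symmetrically). Applying $c_\lambda$ and invoking Lemma \ref{tech} — or rather the right-exact half of it, which is all that is available here since $\calA$-valued tensor is only assumed right exact — gives a surjection $S_\lambda(B)\twoheadrightarrow S_\lambda(C)$, hence $S_\lambda(C)=0$. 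The same computation with $\lambda=(1,\dots,1)$ handles the ``even'' case and with $\lambda=(n)$ the ``odd'' case.

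For the second assertion, assume in addition that $A$ and $B$ are flat for $\tens$; I want to conclude $S_\lambda(A)=0$ from $S_\lambda(B)=0$. Flatness of $B$ makes $B^{\tens n}$ flat, and then the $n$-fold tensor power of the inclusion $A\hookrightarrow B$ is a monomorphism: indeed $A^{\tens n}\to B\tens A^{\tens n-1}\to B^{\tens 2}\tens A^{\tens n-2}\to\cdots\to B^{\tens n}$ is a composite of monomorphisms, each step being the tensor of a monomorphism with a flat object (using flatness of $A$ for the intermediate factors $A^{\tens j}$ and of $B$ for the factors $B^{\tens i}$). This inclusion $A^{\tens n}\hookrightarrow B^{\tens n}$ is $\Sim_n$-equivariant, so by Lemma \ref{tech} (now the left-exact half) we get an injection $S_\lambda(A)=c_\lambda(A^{\tens n})\hookrightarrow c_\lambda(B^{\tens n})=S_\lambda(B)=0$, whence $S_\lambda(A)=0$; and again the even and odd cases are the special partitions.

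The main obstacle I anticipate is bookkeeping rather than conceptual: Lemma \ref{tech} as stated assumes a short exact sequence $0\to A'\to B'\to C'\to 0$, but in the first part I only have right exactness of $\tens$ and hence only a right-exact sequence $B^{\tens n}\to C^{\tens n}\to 0$ with no control over the kernel, while in the second part I only have a left-exact sequence $0\to A^{\tens n}\to B^{\tens n}$. So I should first check that the diagram-chase proof of Lemma \ref{tech} in fact only uses the relevant half in each case — since $c_\lambda$ is an idempotent subfunctor of the identity on the $\Sim_n$-equivariant category, it preserves epimorphisms and monomorphisms separately — and then I can apply the appropriate half. A secondary point to verify carefully is that the $n$-fold tensor power of a surjection (resp. injection) is $\Sim_n$-equivariant with respect to the place-permutation action; this is immediate from naturality of the symmetry isomorphisms but should be noted. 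Once these verifications are in place the argument is as above.
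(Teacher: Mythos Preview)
Your proposal is correct and follows essentially the same approach as the paper's own proof, which is the terse one-liner: ``By \ref{tech}, each $c_\lambda(B^{\tens n})\to c_\lambda(C^{\tens n})$ is onto, and, if $A$ and $B$ are flat, each $c_\lambda(A^{\tens n})\to c_\lambda(B^{\tens n})$ is into.'' You have simply filled in the details the paper leaves implicit---in particular, your observation that Lemma~\ref{tech} only needs the relevant half of exactness (since $c_\lambda$ is an idempotent summand of the identity and hence preserves monos and epis separately), and your step-by-step justification that $A^{\tens n}\hookrightarrow B^{\tens n}$ under the flatness hypotheses, are exactly the points the paper suppresses.
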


\begin{proof}
By \ref{tech}, each $c_\lambda(B^{\tens n})\to c_\lambda(C^{\tens n})$ is onto, and,
if $A$ and $B$ are flat, each $c_\lambda(A^{\tens n})\to
c_\lambda(B^{\tens n})$ is into.
\end{proof}

The presence of a {\qtf} between two {\qtcs} creates relations between
Schur-finite objects.

\begin{lem}\label{fssf}
Let $F:\calA\to \calB$ be a \qtf. 
If an object $X$ of $\calA$ is Schur-finite, so is $F(X)$. If
$F$ is also faithful, then the converse holds, i.e., if $F(X)$ is
Schur-finite, then so is $X$.
\end{lem}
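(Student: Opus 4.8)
The plan is to use the explicit definition of the Schur functor: for a partition $\lambda$ of $n$, we have $S_\lambda(X) = c_\lambda(X^{\otimes n})$, where $c_\lambda$ is a Young symmetrizer and the symmetric group acts via permutation of tensor factors. Since $F$ is a $\Q$-linear tensor functor, it commutes with all the relevant structure, so the first implication should be essentially formal.

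For the forward direction, I would first observe that $F$ respects the tensor structure, so there is a natural isomorphism $F(X^{\otimes n}) \cong F(X)^{\otimes n}$, and this isomorphism is $\Sigma_n$-equivariant (this is part of what it means for $F$ to respect the symmetric monoidal structure). Next, because $F$ is $\Q$-linear, it sends the idempotent endomorphism $c_\lambda$ of $X^{\otimes n}$ — which, recall, is the image of the element $c_\lambda \in \Q[\Sigma_n]$ under the algebra map $\Q[\Sigma_n] \to \End(X^{\otimes n})$ — to the corresponding idempotent on $F(X)^{\otimes n}$. Since $F$, being additive, carries direct summands to direct summands (and carries a summand cut out by an idempotent $e$ to the summand cut out by $F(e)$), we get $F(S_\lambda(X)) \cong S_\lambda(F(X))$. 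Hence if $S_\lambda(X) = 0$ then $S_\lambda(F(X)) \cong F(0) = 0$, so $F(X)$ is Schur-finite.

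For the converse, assume $F$ is faithful and $F(X)$ is Schur-finite, say $S_\lambda(F(X)) = 0$ for some $\lambda$. By the isomorphism just established, $F(S_\lambda(X)) = 0$. Now $S_\lambda(X)$ is a direct summand of $X^{\otimes n}$, so in particular it is an object of $\calA$, and a faithful additive functor reflects zero objects: if $F(Z) = 0$ then $\id_Z$ is sent to $0$, so $\id_Z = 0$ by faithfulness, forcing $Z = 0$. Applying this to $Z = S_\lambda(X)$ gives $S_\lambda(X) = 0$, so $X$ is Schur-finite with the same partition $\lambda$.

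The only point requiring a little care — and the closest thing to an obstacle — is the $\Sigma_n$-equivariance of the comparison isomorphism $F(X^{\otimes n}) \cong F(X)^{\otimes n}$ and the compatibility of $F$ with passing to the summand cut out by $c_\lambda$; both are consequences of coherence for symmetric monoidal functors together with $\Q$-linearity, but they are what make the statement more than a triviality. Everything else is bookkeeping. I would state the equivariant comparison isomorphism explicitly and then simply note that applying $F$ to the idempotent $c_\lambda$ and using pseudo-abelianness of $\calB$ yields $F(c_\lambda(X^{\otimes n})) \cong c_\lambda(F(X)^{\otimes n})$, whence both halves of the lemma follow.
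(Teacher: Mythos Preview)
Your proposal is correct and follows exactly the paper's approach: the paper's proof is the single sentence ``The result follows from the fact that $F(S_\lambda(X))=S_\lambda(F(X))$ for all objects $X$ of $\calA$,'' and your argument is simply a careful unpacking of why that identity holds and why faithfulness lets you reflect the vanishing back to $\calA$.
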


\begin{proof}
The result follows from the fact that  
$F(S_\lambda(X))=S_\lambda(F(X))$ for all objects $X$ of $\calA$.
\end{proof}

\begin{exam}\label{dgalg}
Lemma \ref{fssf} fails for Kimura-finiteness. Let $\calA$ be the
category of graded modules over the graded algebra $A=k[\epsilon]$,
where $\epsilon^2=0$. The forgetful functor $F:\calA \to Vect^\pm_k$ sends
$A$ to $\I\oplus \bar \I$. Hence $A$ is neither even nor odd, but 
it is also indecomposable and therefore it is not Kimura-finite,
even though $F(A)$ is Kimura-finite.
However, $A$ is Schur-finite as
$S_{(2,2)}(A)=0$ (by \ref{svs} and \ref{fssf}).
\end{exam}

\begin{exam}\label{sfnotkfbis}
Let $\calA$ be the category of finitely generated $R$-modules, where
$R$ is a commutative $\Q$-algebra. Then $\calA$ is a \qtc, and by \ref{ab} all objects are
even. Now consider the category of bounded chain
complexes of finitely generated $R$-modules. We have a forgetful
functor from $\mathbf{Ch}^b(\calA)$ to the category $\calA^\pm$ of
super-objects of $\calA$. This forgetful functor is a faithful \qtf.
Since $\calA^\pm$ is Schur-finite by \ref{apm}, we
have that $\mathbf{Ch}^b(\calA)$ is Schur-finite by \ref{fssf}.
It need not be Kimura-finite; see \ref{sfnotkf}.
\end{exam}

\begin{rem}\label{homschur}
Let $\calM_h$ be the category of $\Q$-linear motives modulo
homological equivalence, for a fixed Weil cohomology $H$. By the
K\"unneth formula, the cohomology yields a faithful {\qtf}
$H:\calM_h\to Vect_\Q^\pm$. Since $Vect_\Q^\pm$ is
Schur-finite by \ref{svs}, $\calM_h$ is Schur-finite by \ref{fssf}.
Let $\calM^\pm_h$ be the subcategory of motives of $\calM_h$
for which the odd and the even part of the
decomposition of the diagonal are algebraic.
Y.  Andr\'e and B. Kahn in \cite[9.2.1c+B.2]{yveskahn}, and
independently P. O'Sullivan, proved that $\calM^\pm_h$ is Kimura-finite.

Let $\calM_n$ be the category of $\Q$-linear motives modulo numerical
equivalence.  Since we have a {\qtf} from $\calM_h$ to $\calM_n$ and
$\calM_h$ is Schur-finite, $\calM_n$ is Schur-finite by \ref{fssf}.

Kimura (and O'Sullivan independently) conjectured in \cite[7.1]{kimura} that the category $\calM_r$
of $\Q$-linear motives modulo rational equivalence is Kimura-finite.
This conjecture combined with \ref{fssf} implies that $\calM_h$ is
Kimura-finite.
\end{rem}

\begin{thm}
The category $\calM_n$ is super-Tannakian, i.e.,
there exists a field $K$ of characteristic zero and a faithful fibre functor
from $\calM_n$ to $Vect^\pm_K$.
\end{thm}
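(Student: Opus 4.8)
The plan is to deduce this from Deligne's intrinsic characterization of super-Tannakian categories (\cite{delschur}): a tensor category over a field $k$ of characteristic zero --- that is, a $k$-linear abelian rigid symmetric monoidal category in which $\tens$ is exact and $k$-bilinear, every object has finite length, and $\End(\I)=k$ --- admits a faithful fibre functor to $Vect^\pm_K$ for some extension $K$ of $k$ if and only if every one of its objects is Schur-finite. The only substantive input, namely that $\calM_n$ is Schur-finite, is already in hand from \ref{homschur}; so the task reduces to checking that $\calM_n$ is a tensor category over $\Q$ in this sense.

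First I would record the structural facts about $\calM_n$. The one that is not formal is that $\calM_n$ is abelian, in fact semisimple: this is Jannsen's theorem. From semisimplicity, every short exact sequence in $\calM_n$ splits, so the bilinear functor $\tens$ is automatically exact; and since the Hom groups of $\calM_n$ are finite-dimensional over $\Q$ --- being quotients of the finite-dimensional groups of correspondences modulo homological equivalence --- endomorphism rings are finite-dimensional and every object is a finite direct sum of simple objects, hence of finite length. The unit $\I=h(\Spec k)$ satisfies $\End(\I)=\Q$. Finally $\calM_n$ is rigid: the dual of $h(X)$ for $X$ smooth projective is a Tate twist of $h(X)$, with evaluation and coevaluation provided by the class of the diagonal read in the two directions (Poincar\'e duality), and rigidity descends to direct summands. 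As is customary here, $\calM_n$ carries the commutativity constraint twisted by the K\"unneth sign, which is exactly what makes the cohomological realization of \ref{homschur} a {\qtf} and hence makes the Schur-finiteness argument there applicable.

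With the hypotheses in place I would invoke \ref{homschur}: every object of $\calM_n$ is killed by some Schur functor. Deligne's criterion then yields a field $K$ of characteristic zero together with a faithful exact {\qtf} $\omega\colon\calM_n\to Vect^\pm_K$, i.e., a faithful fibre functor, which is the assertion that $\calM_n$ is super-Tannakian.

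The main obstacle is not the final citation but the verification that $\calM_n$ is the right kind of category for it: the argument rests entirely on Jannsen's semisimplicity theorem, which is what turns the pseudo-abelian category $\calM_n$ into an abelian one with exact tensor and finite-length objects. A secondary point needing care is the choice of symmetry: with the naive commutativity constraint the realization to super-vector spaces is not monoidal, so one works throughout with the K\"unneth-modified constraint, under which both the structural facts above and the Schur-finiteness of \ref{homschur} still hold.
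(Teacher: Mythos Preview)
Your proposal is correct and takes the same route as the paper: verify via Jannsen's semisimplicity and rigidity that $\calM_n$ meets the hypotheses of Deligne's criterion \cite[2.1]{delschur}, then invoke the Schur-finiteness established in \ref{homschur}. The paper's version is terser and adds two small remarks you have folded into your statement of Deligne's theorem --- that one passes to a field from the commutative ring Deligne produces, and that faithfulness is automatic from rigidity \cite[0.9]{delschur} --- but the argument is the same.
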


\begin{proof}
The category $\calM_n$ is abelian and semi-simple.
This in particular implies that the tensor product is exact.  This
category is also rigid by \cite[p. 232]{jannsenbanff}. Therefore the
result comes from \ref{homschur}, \cite[2.1]{delschur}, and 
the fact that every commutative ring maps to a field. The
faithfulness comes automatically from the rigidity (see
\cite[0.9]{delschur}).
\end{proof}

\section{Abelian \qtcs}

In this section we assume that the {\qtc} $\calA$ is abelian and study
how Schur-finiteness behaves with respect to extensions.

The following construction is adapted from \cite[1.19]{delschur} and
will be useful to prove that Schur-finiteness is closed under
extension of flat objects. 

Let $\calA$ be an abelian tensor category and let
$X$ be the extension
\[ 0\rTo P\rTo X \rTo Q\rTo 0\]
where $P$ and $Q$ are flat objects. Then we define a
$\Sim_n$-equivariant filtration of $X^{\tens n}$ as follows.  The
filtration $F_i(X^{\tens n})$ will be the subobject generated by all
$n$-fold tensor products where $n-i$ factors are copies of $P$ and the
remaining $i$ are copies of $X$. To make this precise, we establish
some notations.

\begin{dfn}\label{dfnfiltration}
Let $X$ be the extension
\[ 0\rTo P\rTo X \rTo Q\rTo 0.\]
For every pair of numbers $i$ and $j$ so that $i+j=n$,
we define
\[T_{j,i}(P,X)=Ind_{\Sim_j\times \Sim_i}^{\Sim_n}(P^{\tens j}\tens X^{\tens i}).\]
The $\Sim_j\times \Sim_i$-equivariant maps $P^{\tens j}\tens X^{\tens
  i}\to X^{\tens n}$ induce $\Sim_n$-equivariant maps
$f_i:T_{j,i}(P,X)\to X^{\tens n}$. We define
\[F_i(X^{\tens n})=\Img(f_i)= T_{j,i}(P,X)/(\Ker f_i).\]
In particular $F_0(X^{\tens n})=f_0(T_{n,0}(P,X))=P^{\tens n}$,
$F_n(X^{\tens n})=f_n(T_{0,n}(P,X))=X^{\tens n}$ and 
\[ T_{n-1,1}(P,X)=
(P^{\tens n-1}\tens X) \oplus  (P^{\tens n-2}\tens X\tens P)\oplus \ldots
\oplus (X\tens P^{\tens n-1}).\]
\end{dfn}

Since the maps $f_i$ are
$\Sim_n$-equivariant, so are the $F_i(X^{\tens n})$.  Since the map $P^{j+1}\tens
X^{i-1}\to X^{\tens n}$ factors through $P^j\tens X^i$, then 
the $\Sim_n$-equivariant map $f_{i-1}$
factors through $f_i$, and hence $F_{i-1}(X^{\tens n})=\Img(f_{i-1})\subseteq
\Img(f_i)=F_i(X^{\tens n})$.  Therefore the $F_i(X^{\tens n})$ form a $\Sim_n$-equivariant
filtration of $X^{\tens n}$.






\begin{prop}\label{filtration}
Let $X$ be an extension of two flat objects $P$ and $Q$ and let
$F_i(X^{\tens n})=\Img(f_i)$ as in \ref{dfnfiltration}. Then $F_i/F_{i-1}\isom
T_{n-i,i}(P,Q)$.
\end{prop}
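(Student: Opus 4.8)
The plan is to prove the isomorphism $F_i/F_{i-1}\isom T_{n-i,i}(P,Q)$ by reducing everything to the case where the extension splits. The key observation is that $F_i(X^{\tens n})=\Img(f_i)$, where $f_i\colon T_{n-i,i}(P,X)\to X^{\tens n}$, and that $T_{n-i,i}(P,X)=Ind_{\Sim_{n-i}\times\Sim_i}^{\Sim_n}(P^{\tens(n-i)}\tens X^{\tens i})$ carries the extension $0\to P\to X\to Q\to 0$ in each of the last $i$ tensor slots. So I would first tensor the short exact sequence with $P$ repeatedly: since $P$ and $Q$ (hence $X$) are flat, each $P^{\tens a}\tens(-)\tens X^{\tens b}$ applied to $0\to P\to X\to Q\to 0$ remains short exact. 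This lets me build, slot by slot, a filtration of $P^{\tens(n-i)}\tens X^{\tens i}$ whose associated graded pieces are the various $P^{\tens(n-i)}\tens(\text{mix of }P\text{'s and }Q\text{'s in the last }i\text{ slots})$, with the top quotient being $P^{\tens(n-i)}\tens Q^{\tens i}$.

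Next I would induct on $i$. The map $f_{i-1}$ factors through $f_i$ (as already observed in the text), so $F_{i-1}\subseteq F_i$ and $F_i/F_{i-1}$ is the cokernel of $F_{i-1}\hookrightarrow F_i$. Concretely, $F_{i-1}=\Img(f_{i-1})$ is generated by those $n$-fold tensors with at least $n-i+1$ copies of $P$, i.e.\ the sub of $X^{\tens n}$ coming from all ways of placing an extra $P$ among the $i$ ``free'' $X$-slots. Using flatness, $T_{n-i,i}(P,X)$ has a $\Sim_n$-equivariant subobject mapping onto $F_{i-1}$, namely the induced module on $P^{\tens(n-i+1)}\tens X^{\tens(i-1)}$ summed over slot choices; the quotient of $T_{n-i,i}(P,X)$ by this subobject is $Ind_{\Sim_{n-i}\times\Sim_i}^{\Sim_n}(P^{\tens(n-i)}\tens Q^{\tens i})=T_{n-i,i}(P,Q)$. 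The content is then that $f_i$ descends to an \emph{isomorphism} from this quotient onto $F_i/F_{i-1}$: surjectivity is clear since $f_i$ is surjective onto $F_i$, and injectivity follows because $\Ker f_i$ is contained in the preimage of $F_{i-1}$, which in turn is exactly the subobject we quotiented by. This last containment is where flatness is really used — one checks it on the associated graded of the slot-filtration above, where everything is a direct sum of terms of the form $P^{\tens*}\tens(\text{lower }X\text{-degree})$, all of which land in $F_{i-1}$.

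I expect the main obstacle to be the bookkeeping around the non-injectivity of $f_i$: $T_{n-i,i}(P,X)$ genuinely surjects onto $F_i$ with kernel, so one cannot simply say $F_i\isom T_{n-i,i}(P,X)$ and quotient. The careful point is to identify $\Ker f_i$ precisely enough to see that, modulo $F_{i-1}$, it agrees with the kernel of the natural surjection $T_{n-i,i}(P,X)\twoheadrightarrow T_{n-i,i}(P,Q)$. I would handle this by working with the slot-by-slot filtration and invoking flatness to keep all the relevant sequences short exact, so that taking associated graded commutes with the $Ind$ and with the idempotents; the general abelian-category diagram chase (in the style of Lemma~\ref{tech}) then closes the argument. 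A remark: this is exactly the categorical analogue of the classical fact that $\mathrm{Sym}^n$ or $\wedge^n$ of an extension has a filtration with graded pieces $\mathrm{Sym}^{n-i}P\tens\mathrm{Sym}^i Q$ (resp.\ $\wedge^{n-i}P\tens\wedge^i Q$), and the proof structure mirrors \cite[1.19]{delschur}.
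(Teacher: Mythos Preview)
Your approach is genuinely different from the paper's, and the crux of your argument---the injectivity of the induced map $T_{n-i,i}(P,Q)\to F_i/F_{i-1}$---is not convincingly established. You correctly produce the surjection by factoring $f_i$ through the quotient $T_{n-i,i}(P,X)\twoheadrightarrow T_{n-i,i}(P,Q)$, and you correctly note that $f_i$ carries the kernel $K_i$ of that quotient onto $F_{i-1}$. But the sentence ``$\Ker f_i$ is contained in the preimage of $F_{i-1}$, which in turn is exactly the subobject we quotiented by'' hides the whole content: of course $\Ker f_i\subseteq f_i^{-1}(F_{i-1})$; what you actually need is the reverse inclusion $f_i^{-1}(F_{i-1})\subseteq K_i$, and your slot-filtration on $T_{n-i,i}(P,X)$ only tells you that the \emph{lower} graded pieces land in $F_{i-1}$, not that nothing from the top piece $T_{n-i,i}(P,Q)$ dies in $F_i/F_{i-1}$. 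The induction you set up is on $i$ with $n$ fixed, and I do not see how that induction hypothesis feeds into the injectivity for the next $i$.

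The paper sidesteps this entirely by inducting on $n$ rather than on $i$. It observes that the filtration $F_*(X^{\tens n})$ is the tensor product of the filtrations $F_*(X^{\tens(n-1)})$ and $F_*(X)$, and then invokes the standard fact (Bourbaki, \emph{Alg\`ebre commutative}, III.2, Ex.~6) that for flat filtered objects the associated graded of a tensor product is the tensor product of the associated gradeds. Since $gr_*(X)$ is just $P$ in degree $0$ and $Q$ in degree $1$, one reads off
\[
gr_i(X^{\tens n})\isom gr_i(X^{\tens(n-1)})\tens P\ \oplus\ gr_{i-1}(X^{\tens(n-1)})\tens Q,
\]
and the inductive hypothesis identifies the right-hand side with $T_{n-i,i}(P,Q)$. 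This packages exactly the injectivity you were missing into a single citation, and it is the step where flatness is genuinely consumed. If you want to salvage your direct approach, you will essentially end up reproving that Bourbaki exercise inside your slot-filtration argument; it is cleaner to switch the induction variable to $n$ from the start.
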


\begin{proof}
We are going to proceed by induction on $n=i+j$. For $n=1$ it is clear.
Let us suppose that the statement is true for $n-1$ and
consider the filtration $F_*(X^{n-1})$ on $X^{n-1}$
and the given filtration $F_*(X)$ on $X$. The tensor product of the
two filtrations yields the filtration $F_*(X^{\tens n})$ on $X^{\tens n}$.
Set $gr_i(X^{\tens n})=F_i(X^{\tens n})/F_{i-1}(X^{\tens n})$. Since $X$ is flat,
$gr_*(X^{i-1})\tens gr_*(X)=gr_*(F_*(X^{n-1}))\tens gr_*(F_*(X))\isom gr_*(F_*(X^{\tens n}))
=gr_*(X^{\tens n})$ by \cite[Ex. III.2.6]{bourbaki}.
But $gr_i(X)$ is $P$ when $i=0$, $Q$ when $i=1$, and $0$ otherwise. Therefore
\[ gr_i(X^{\tens n})\isom \left(gr_i(X^{n-1})\tens P\right) \oplus 
\left(gr_{i-1}(X^{n-1})\tens Q\right)\]
\[= \left(T_{n-1-i,i}(P,Q)\tens P\right) \oplus 
\left(T_{n-i,i-1}(P,Q)\tens Q\right)=T_{i,j}(P,Q).\qedhere\]
\end{proof}

And now to the theorem we advertised before.

\begin{thm}\label{abext}
(Cf. \cite[1.19]{delschur}.) Let $\calA$ be an abelian \qtc. Then any 
extension of Schur-finite flat objects is Schur-finite.
\end{thm}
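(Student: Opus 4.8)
The plan is to exploit the $\Sim_n$-equivariant filtration $F_\bullet(X^{\tens n})$ of Definition~\ref{dfnfiltration} together with its graded pieces computed in Proposition~\ref{filtration}. Suppose $0\to P\to X\to Q\to 0$ is an extension of flat Schur-finite objects. Choose partitions so that $S_\alpha(P)=0$ and $S_\beta(Q)=0$; by \ref{sfgen}(2) we may replace $\alpha,\beta$ by a common large rectangle $\mu$, so that $S_\mu(P)=S_\mu(Q)=0$. The claim will be that a sufficiently large rectangle (or more precisely some explicit partition $\lambda$ depending on $\mu$) annihilates $X$. First I would apply $c_\lambda$ to the filtration $F_\bullet(X^{\tens n})$: by Lemma~\ref{tech}, since each inclusion $F_{i-1}\hookrightarrow F_i$ is a map of $\Sim_n$-equivariant objects with flat graded quotient $T_{n-i,i}(P,Q)$ (flatness holds because $P,Q$ are flat and $\mathrm{Ind}$ is built from tensor products), $c_\lambda$ preserves exactness of $0\to F_{i-1}\to F_i\to F_i/F_{i-1}\to 0$. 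Hence $c_\lambda(X^{\tens n})$ has a filtration whose graded pieces are the $c_\lambda\bigl(T_{n-i,i}(P,Q)\bigr)$, and it therefore suffices to show each of these vanishes for a good choice of $\lambda$.

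Next I would analyze $c_\lambda\bigl(T_{n-i,i}(P,Q)\bigr)$ representation-theoretically. We have $T_{n-i,i}(P,Q)=\mathrm{Ind}_{\Sim_{n-i}\times\Sim_i}^{\Sim_n}(P^{\tens n-i}\tens Q^{\tens i})$, and decomposing $P^{\tens n-i}$ and $Q^{\tens i}$ into isotypic pieces for $\Sim_{n-i}$ and $\Sim_i$ respectively gives, via \ref{sfgen}(1) applied iteratively (or directly via the branching/Littlewood--Richardson formula encoded in $[\lambda{:}\mu,\nu]$), that $c_\lambda\bigl(T_{n-i,i}(P,Q)\bigr)$ is a direct sum of terms $[\lambda{:}\sigma,\tau]\,S_\sigma(P)\tens S_\tau(Q)$ with $|\sigma|=n-i$, $|\tau|=i$. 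So the graded piece vanishes as soon as, for every pair $(\sigma,\tau)$ with $[\lambda{:}\sigma,\tau]\neq 0$, either $\mu\subseteq\sigma$ (forcing $S_\sigma(P)=0$) or $\mu\subseteq\tau$ (forcing $S_\tau(Q)=0$). The combinatorial heart is thus: find $\lambda$ such that whenever $V_\lambda$ appears in $\mathrm{Ind}(V_\sigma\boxtimes V_\tau)$, the rectangle $\mu$ fits inside $\sigma$ or inside $\tau$. This is exactly the kind of statement in \cite[1.19]{delschur}: if $\mu$ is the $p\times q$ rectangle, one can take $\lambda$ to be the $2p\times 2q$ rectangle (or any rectangle large enough in both directions), because the Littlewood--Richardson rule forbids $V_\lambda\subseteq\mathrm{Ind}(V_\sigma\boxtimes V_\tau)$ unless $\sigma$ and $\tau$ together ``fill'' $\lambda$, and a pigeonhole argument on rows/columns then forces one of them to contain $\mu$.

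Finally, having shown every graded piece $c_\lambda(F_i/F_{i-1})=c_\lambda\bigl(T_{n-i,i}(P,Q)\bigr)$ vanishes, it follows by the exact sequences from Lemma~\ref{tech} and an obvious induction on $i$ that $c_\lambda(F_i(X^{\tens n}))=0$ for all $i$, and in particular $S_\lambda(X)=c_\lambda(F_n(X^{\tens n}))=c_\lambda(X^{\tens n})=0$. Thus $X$ is Schur-finite.

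I expect the main obstacle to be the purely combinatorial step: pinning down exactly which $\lambda$ works, i.e.\ proving that for a suitable rectangle $\lambda$ every constituent $V_\sigma\boxtimes V_\tau$ of $\mathrm{res}(V_\lambda)$ has $\mu\subseteq\sigma$ or $\mu\subseteq\tau$. One has to be careful that the induction in Proposition~\ref{filtration} really produces \emph{flat} graded quotients so that Lemma~\ref{tech} applies at every stage, and that the isotypic decomposition of $T_{n-i,i}(P,Q)$ under $c_\lambda$ is the claimed one — this is where the identities in Proposition~\ref{sfgen}, together with Frobenius reciprocity as recalled before \ref{sfgen}, do the work. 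Everything else (the diagram chases, the induction on $i$) is routine given the machinery already set up.
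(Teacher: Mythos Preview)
Your argument is correct and uses exactly the same scaffolding as the paper: the $\Sim_n$-equivariant filtration of Definition~\ref{dfnfiltration}, the identification of the graded pieces from Proposition~\ref{filtration}, the exactness of $c_\lambda$ from Lemma~\ref{tech}, and the final induction on~$i$. Where you diverge is in the step showing that each $c_\lambda(F_i/F_{i-1})$ vanishes. You propose to analyse each piece separately via Littlewood--Richardson and then search for a rectangle $\lambda$ large enough that every constituent pair $(\sigma,\tau)$ satisfies $\mu\subseteq\sigma$ or $\mu\subseteq\tau$; this is essentially the route of \cite[1.19]{delschur} and it works, but the combinatorics is precisely the obstacle you flag. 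The paper sidesteps it entirely with a one-line observation: by Proposition~\ref{filtration}, $\bigoplus_i F_i/F_{i-1}\cong (P\oplus Q)^{\tens n}$ as $\Sim_n$-objects. Since Schur-finiteness is closed under direct sums (Corollary~\ref{kfsf}), one simply chooses $\lambda$ with $S_\lambda(P\oplus Q)=0$; then $\bigoplus_i c_\lambda(F_i/F_{i-1})=c_\lambda\bigl((P\oplus Q)^{\tens n}\bigr)=S_\lambda(P\oplus Q)=0$, so every summand dies with no combinatorics at all. In fact your own decomposition $c_\lambda(T_{n-i,i}(P,Q))=\bigoplus_{\sigma,\tau}[\lambda{:}\sigma,\tau]\,S_\sigma(P)\tens S_\tau(Q)$, summed over $i$, is exactly formula~\ref{sfgen}(3) for $S_\lambda(P\oplus Q)$, so you were one sentence away from this shortcut. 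The paper's route also buys the sharper statement recorded as Corollary~\ref{extevenodd}: $S_\lambda(P\oplus Q)=0$ implies $S_\lambda(X)=0$ for the \emph{same} $\lambda$, whereas your rectangle argument only produces a strictly larger annihilating partition.
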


\begin{proof}

Consider the extension
\[ 0\to P\to X\to Q\to 0\]
and the corresponding filtration constructed in \ref{dfnfiltration}.
Choose an integer $n$ and a partition $\lambda$ of $n$ so that
$S_\lambda(P)=S_\lambda(Q)=S_\lambda(P\oplus Q)=0$.  By
\ref{filtration}, $\oplus_i F_i/F_{i-1}=(P\oplus Q)^{\tens n}$.  Since
$F_i/F_{i-1}$ is $\Sim_n$-invariant, $\oplus
c_\lambda(F_i/F_{i-1})=c_\lambda (\oplus F_i/F_{i-1})= c_\lambda (
(P\oplus Q)^{\tens n})=S_\lambda(P\oplus Q)=0$ and therefore
$c_\lambda(F_i/F_{i-1})=0$ for every $i$.
Consider the short exact sequences
\[ 0\to F_{i-1}\to F_i\to F_i/F_{i-1}\to 0.\]
By \ref{tech}, we have short exact sequences
\[ 0\to c_\lambda(F_{i-1})\to c_\lambda(F_i)\to c_\lambda(F_i/F_{i-1})\to 0.\]
But $c_\lambda(F_0)=c_\lambda(P^{\tens n})=S_\lambda(P)=0$ by hypothesis.
Hence we proceed by induction to prove that $c_\lambda(F_i)=0$ for
every $i$. In particular, $S_\lambda(X)=c_\lambda(F_n)=0$.
\end{proof}

\begin{cor}\label{extevenodd}
Let $\calA$ be an abelian \qtc. 
Let $0\to P\to X\to Q\to 0$
be a short exact sequence of flat objects.
If $S_\lambda(P\oplus Q)=0$, then $S_\lambda(X)=0$.
In particular, an extension of odd (respectively, even) 
flat objects is odd (respectively, even).
\end{cor}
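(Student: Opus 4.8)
The plan is to derive Corollary~\ref{extevenodd} as an essentially immediate consequence of Theorem~\ref{abext}, by re-running the proof of that theorem but keeping track of one fixed partition $\lambda$ rather than choosing $\lambda$ to annihilate everything in sight. So first I would observe that the only place the hypothesis $S_\lambda(P)=S_\lambda(Q)=S_\lambda(P\oplus Q)=0$ entered the proof of \ref{abext} was (a) to conclude $c_\lambda(F_i/F_{i-1})=0$ for all $i$ from $S_\lambda(P\oplus Q)=0$, and (b) to get the base case $c_\lambda(F_0)=S_\lambda(P)=0$. Under the present hypothesis we only assume $S_\lambda(P\oplus Q)=0$, so I need to recover both (a) and (b) from that single assumption.

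Point (a) is unchanged: by \ref{filtration} we have $\bigoplus_i F_i/F_{i-1}\isom (P\oplus Q)^{\tens n}$ as $\Sim_n$-equivariant objects, so $\bigoplus_i c_\lambda(F_i/F_{i-1}) = c_\lambda\big((P\oplus Q)^{\tens n}\big) = S_\lambda(P\oplus Q) = 0$, forcing each $c_\lambda(F_i/F_{i-1})=0$. For point (b), the base case is $F_0(X^{\tens n}) = P^{\tens n}$, so $c_\lambda(F_0) = S_\lambda(P)$; I need this to vanish. By \ref{sfgen}(3), $S_\lambda(P\oplus Q) \isom \bigoplus [\lambda{:}\mu,\nu]\,(S_\mu(P)\tens S_\nu(Q))$, and taking $\mu=\lambda$, $\nu=\varnothing$ (the empty partition of $0$, with $S_\varnothing = \I$) the coefficient $[\lambda{:}\lambda,\varnothing]$ is $1$, so $S_\lambda(P)$ is a direct summand of $S_\lambda(P\oplus Q)=0$; hence $S_\lambda(P)=0$, and symmetrically $S_\lambda(Q)=0$. (Alternatively one notes $P$ is a flat subobject of $P\oplus Q$ and applies \ref{ab}, or \ref{fssf} to the projection — but the direct-summand argument is cleanest and avoids flatness subtleties.) With (a) and (b) in hand, the induction of \ref{abext} via the short exact sequences $0\to c_\lambda(F_{i-1})\to c_\lambda(F_i)\to c_\lambda(F_i/F_{i-1})\to 0$ (from \ref{tech}) goes through verbatim and yields $S_\lambda(X) = c_\lambda(F_n) = 0$.

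The final sentence is then a specialization. If $P$ and $Q$ are both odd, pick $p,q$ with $Sym^p(P)=0$ and $Sym^q(Q)=0$; I claim $Sym^{p+q-1}(P\oplus Q)=0$, whence $Sym^{p+q-1}(X)=0$ by the first part and $X$ is odd. Indeed, by \ref{sfgen}(3) with $\lambda=(p+q-1)$, the summands of $Sym^{p+q-1}(P\oplus Q)$ are indexed by pairs $\mu,\nu$ with $|\mu|+|\nu| = p+q-1$ and $[\,(p+q-1){:}\mu,\nu\,]\neq 0$; since the trivial representation of $\Sim_{|\mu|}\times\Sim_{|\nu|}$ appears in the restriction of the trivial representation of $\Sim_{p+q-1}$ exactly once and only for $\mu = (|\mu|)$, $\nu = (|\nu|)$ the one-row partitions, every nonzero summand is $Sym^a(P)\tens Sym^b(Q)$ with $a+b = p+q-1$, forcing $a\geq p$ or $b\geq q$; by \ref{sfgen}(2) (monotonicity in the partition, i.e.\ $Sym^p(P)=0 \Rightarrow Sym^a(P)=0$ for $a\ge p$) each such summand vanishes. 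The even case is identical with $\wedge$ in place of $Sym$ and the sign representation in place of the trivial one.

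The argument is entirely routine given the machinery already developed; there is no real obstacle. The only point demanding a little care is the bookkeeping in the last paragraph — making sure that the partitions $\mu,\nu$ occurring with nonzero coefficient $[\lambda{:}\mu,\nu]$ when $\lambda$ is a single row (or single column) are themselves forced to be single rows (resp.\ columns), which is the classical fact that $\mathrm{Ind}_{\Sim_a\times\Sim_b}^{\Sim_{a+b}}(\mathrm{triv}\boxtimes\mathrm{triv})$ contains the trivial representation with multiplicity one. Everything else is a direct appeal to \ref{tech}, \ref{filtration}, \ref{sfgen}, and the proof template of \ref{abext}.
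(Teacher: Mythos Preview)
Your proposal is correct and follows exactly the approach the paper intends: the paper's proof is the single line ``This is clear from the proof of \ref{abext},'' and you have faithfully unpacked what that means, including the small extra observation that $S_\lambda(P)=0$ follows from $S_\lambda(P\oplus Q)=0$ as a direct summand. Your treatment of the ``in particular'' clause is more explicit than the paper's (which leaves it implicit), but it is the same idea: $P\oplus Q$ is odd (resp.\ even) when $P$ and $Q$ are, so the first assertion applies with $\lambda$ a single row (resp.\ column).
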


\begin{proof}
This is clear from the proof of \ref{abext}.
\end{proof}

\section{Chain complexes and derived categories}

In this section we will always assume that $\calA$ is an abelian \qtc.
Consider the category $\bfC=\mathbf{Ch}^-(\calA)$ of bounded below chain
complexes. If $M$ and $N$ are two objects, then we define
$M\tens_\bfC N=Tot^\oplus(M\tens N)$. Using this tensor product,
$\bfC$ is an abelian {\qtc}.

It is easy to see that all Schur-finite complexes in $\mathbf{Ch}^-(\calA)$ are
bounded. Therefore we will be interested in bounded complexes.

\begin{lem}\label{chsf}
If $\calA$ is abelian and Schur-finite, then $\mathbf{Ch}^b(\calA)$ is Schur-finite.
\end{lem}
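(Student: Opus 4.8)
The plan is to induct on the length of the complex, using the truncation stupid filtration to reduce an arbitrary bounded complex to its terms, which live in $\calA$. First, I would observe that a complex $M$ concentrated in a single degree $d$ is, up to the shift functor, an object of $\calA$ placed in degree $d$; since $\calA$ is Schur-finite, each such term is killed by some Schur functor. The shift by $d$ only tensors with a fixed $\otimes$-invertible object of the form $\I[d]$ (a copy of the unit in an odd or even degree), and by Lemma \ref{apm}/Example \ref{svs}-style reasoning — more precisely since tensoring with such an object sends $S_\lambda$ to either $S_\lambda$ or $S_{\lambda'}$ up to sign — single-term complexes are Schur-finite. Alternatively, and more cleanly, the forgetful $\qtf$ $\mathbf{Ch}^b(\calA)\to\calA^\pm$ is faithful (as in Example \ref{sfnotkfbis}), so one only needs Schur-finiteness of $\calA^\pm$, which follows from Lemma \ref{apm}; but to keep the induction self-contained I would argue directly with the stupid filtration.

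The key step is then extension-closedness. Given a bounded complex $M$ with nonzero terms in a range of length $r$, the stupid truncation gives a short exact sequence of complexes
\[ 0 \to \sigma_{\geq d}M \to M \to \sigma_{<d}M \to 0 \]
for a suitable $d$, where $\sigma_{<d}M$ has length $1$ (a single term) and $\sigma_{\geq d}M$ has length $r-1$. Both subquotients are Schur-finite: the single term by the previous paragraph, the shorter complex by the inductive hypothesis. Now I would like to invoke Theorem \ref{abext}, which says that an extension of Schur-finite flat objects is Schur-finite in an abelian $\qtc$ — and $\mathbf{Ch}^b(\calA)$, viewed inside $\bfC = \mathbf{Ch}^-(\calA)$ with $Tot^\oplus$, is an abelian $\qtc$ as noted in the text.

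The main obstacle is the flatness hypothesis in Theorem \ref{abext}: that theorem requires $P$ and $Q$ to be flat with respect to $\otimes_{\bfC}$, and a general term of a complex in $\calA$ need not be a flat object of $\calA$. To get around this I would note that the stupid filtration is \emph{split} in each degree — $M_n \cong (\sigma_{\geq d}M)_n \oplus (\sigma_{<d}M)_n$ as objects of $\calA$, with the differential of $M$ being upper triangular — so the relevant short exact sequence of complexes is degreewise split, hence remains exact after applying $(-)^{\otimes n}$ and any additive functor, \emph{regardless of flatness}. Thus one can rerun the proof of Theorem \ref{abext} verbatim: build the $\Sim_n$-equivariant filtration $F_i(M^{\otimes n})$, check via Proposition \ref{filtration} (whose proof uses flatness only to split the tensor of filtrations, which here is automatic since everything is degreewise split) that $gr_i \cong T_{n-i,i}(P,Q)$, deduce $c_\lambda(gr_i)=0$ from $S_\lambda(P\oplus Q)=0$, and conclude $c_\lambda(M^{\otimes n})=0$ by the diagram chase of Lemma \ref{tech} applied to the short exact sequences $0\to F_{i-1}\to F_i\to F_i/F_{i-1}\to 0$. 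So the argument is really Theorem \ref{abext} with "flat" replaced by "the extension is degreewise split," and I expect the only real work is checking that Proposition \ref{filtration} survives that weakening — which it does, since its one external input (\cite[Ex.\ III.2.6]{bourbaki} on associated graded of tensor filtrations) needs only that the filtrations split degreewise.
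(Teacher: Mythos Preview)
Your proposal is correct, but it takes a substantially longer route than the paper. The paper's proof is precisely the ``alternative, cleaner'' argument you mention in passing and then set aside: the forgetful functor $\mathbf{Ch}^b(\calA)\to\calA^\pm$ is a faithful \qtf, $\calA^\pm$ is Schur-finite by Lemma~\ref{apm}, and Lemma~\ref{fssf} finishes it. That is the entire proof.

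Your inductive argument via stupid truncation and a degreewise-split variant of Theorem~\ref{abext} is valid --- and in fact the paper later uses exactly this ``degreewise split replaces flat'' manoeuvre in the proof of Lemma~\ref{2of3}. So what you have written is not wasted: it is essentially a rehearsal of the mechanism behind \ref{2of3} in a simpler setting. The trade-off is that the forgetful-functor proof is one line and uses only soft generalities (\ref{apm}, \ref{fssf}), whereas your approach re-proves the content of \ref{abext}/\ref{filtration} under relaxed hypotheses; that extra work is unnecessary here but becomes essential once one passes to the derived category, where there is no faithful tensor functor to $\calA^\pm$ to appeal to.
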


\begin{proof}
Consider the faithful forgetful {\qtf}
$\mathbf{Ch}^b(\calA)\to \calA^\pm$. Since $\calA$ is Schur-finite,
then so is $\calA^\pm$ by \ref{apm}. Then \ref{fssf} yields the result.
\end{proof}

The category of chain complexes provides us with another example of
an object which is Schur-finite but not Kimura-finite, beside \ref{dgalg}.

\begin{exam}\label{sfnotkf}
(B. Kahn) Consider the category of bounded chain complexes of $R$-modules,
where $R=\Q[x]$. This is clearly a \qtc. Let $M$ be the complex
$R\rTo^x R$. This complex is irreducible, and is not Kimura-finite because 
$Sym^n M\isom M$ and $\wedge^n M\isom M[n-1]$.   
By \ref{chsf}, $M$ is Schur-finite.
\end{exam}

\begin{exam}
(P. O'Sullivan) Consider the category of chain complexes of coherent
modules for $\Proj^1$. By \ref{vb} and \ref{sfnotkf}, the complex
$\reg(1)\rTo^x \reg$ is Schur-finite but not Kimura-finite. This category
has the feature that $\End(\reg)=k$.
\end{exam}

Let $\bfD=\mathbf{D}^-(\calA)$ be the bounded below derived category
and consider the localization functor $q:\bfC\to \bfD$.
For simplicity, let us assume that $\calA$ has enough projectives
to avoid some technical difficulties. 
In the derived category we define the
tensor product of two objects $M$ and $N$ as
\[ M\tens_\bfD N=Tot^\oplus(P\tens Q),\]
where $P$ and $Q$ are two projective resolutions of $M$ and $N$,
respectively.  (See \cite[Lec. 8]{notes}.)

With these conventions, if $\calA$ is an abelian \qtc, then the
bounded below derived category $\bfD$ is a \qtc.

\begin{lem}\label{shurproj}
If $P$ is a complex of projectives, then
$q(S_\lambda(P))=S_\lambda(q(P))$.  In particular, if
$S_\lambda(P)=0$ in $\bfC$, then $S_\lambda(P)=0$ in $\bfD$.
Conversely, if $S_\lambda(P)=0$ in $\bfD$, then $S_\lambda(P)$ is
acyclic in $\bfC$.
\end{lem}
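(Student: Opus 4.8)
The plan is to exploit the definition $M\tens_{\bfD} N=Tot^{\oplus}(P'\tens Q')$ for projective resolutions together with the fact that a complex of projectives is its own projective resolution. First I would observe that if $P$ is a bounded below complex of projectives, then $P^{\tens n}=Tot^{\oplus}(P\tens\cdots\tens P)$ computed in $\bfC$ is again a bounded below complex of projectives (a tensor product of projectives is projective, and a bounded-below total complex of a multicomplex with projective entries has projective entries). Hence $P^{\tens n}$ already is a projective resolution of itself, so the tensor power of $P$ in $\bfD$ is represented by the same complex $P^{\tens n}$, i.e.\ $q(P^{\tens n})=q(P)^{\tens n}$ in $\bfD$, and moreover this identification is $\Sim_n$-equivariant since $q$ is a functor and commutes with the symmetry isomorphisms used to define the $\Sim_n$-action. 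Because $q$ is $\Q$-linear and additive, it sends the idempotent $c_\lambda$ on $P^{\tens n}$ to the corresponding idempotent on $q(P)^{\tens n}$; therefore $q(c_\lambda(P^{\tens n}))=c_\lambda(q(P)^{\tens n})$, which is exactly $q(S_\lambda(P))=S_\lambda(q(P))$.

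Given that identity, the three assertions follow formally. If $S_\lambda(P)=0$ in $\bfC$, then applying $q$ gives $S_\lambda(q(P))=q(S_\lambda(P))=q(0)=0$ in $\bfD$, so $S_\lambda(P)=0$ in $\bfD$. Conversely, if $S_\lambda(P)=0$ in $\bfD$, then the complex $S_\lambda(P)$, which is a genuine complex computed in $\bfC$, becomes zero in the derived category; but a bounded below complex is isomorphic to $0$ in $\bfD^{-}(\calA)$ if and only if it is acyclic, so $S_\lambda(P)$ is acyclic in $\bfC$. (One should note the asymmetry: vanishing in $\bfC$ is strictly stronger than vanishing in $\bfD$, which is why the converse only yields acyclicity, not vanishing on the nose — the complex $S_\lambda(P)$ need not be the zero complex, just a complex with no homology.)

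The step I expect to require the most care is the equivariant identification $q(P^{\tens n})\cong q(P)^{\tens n}$ in $\bfD$ — concretely, checking that the total complex of the $n$-fold external tensor product of $P$ with itself really does serve as the model for $P^{\tens n}_{\bfD}$ compatibly with the $\Sim_n$-action. The point is that the definition of $\tens_{\bfD}$ chooses projective resolutions; since $P$ is already a complex of projectives one is tempted to say ``take $P$ itself'', and this is legitimate, but one must confirm that the comparison map between any two choices of resolution is an $\Sim_n$-equivariant quasi-isomorphism, so that the Young symmetrizer acts compatibly. This is where the hypothesis that $\calA$ has enough projectives (invoked just before the lemma) does its work, and it is the only genuinely non-formal input. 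Everything after that — linearity of $q$, $q$ commuting with idempotent splittings in the pseudo-abelian structure, and the characterisation of zero objects in $\bfD^{-}$ — is routine.
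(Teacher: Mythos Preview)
Your proposal is correct and follows exactly the same approach as the paper, which simply says the lemma is ``clear from the fact that if $P$ is a complex of projectives, then $q(P\tens_\bfC P)=P\tens_\bfD P$.'' You have merely (and helpfully) unpacked what the paper leaves implicit: that this identification is $\Sim_n$-equivariant, that $q$ preserves idempotent splittings, and that vanishing in $\bfD$ is equivalent to acyclicity in $\bfC$.
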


\begin{proof}
Clear from the fact that if $P$ is a complex of projectives, then
$q(P\tens_\bfC P)=P\tens_\bfD P$.
\end{proof}

\begin{lem}
Let $\calA$ be Schur-finite and
let $X$ be a bounded complex with a finite projective resolution. Then
$X$ is Schur-finite in $\mathbf{D}^b(\calA)$.
\end{lem}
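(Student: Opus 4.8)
The plan is to reduce the statement to \ref{chsf} (which says $\mathbf{Ch}^b(\calA)$ is Schur-finite when $\calA$ is) together with \ref{shurproj} (which compares Schur functors in $\bfC$ and $\bfD$ for complexes of projectives). The hypothesis gives us a bounded complex $P$ of projectives together with a quasi-isomorphism $P \to X$, so in $\bfD$ we have $X \isom P$, and it suffices to prove that $P$ is Schur-finite in $\bfD$.

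First I would observe that $P$ is a bounded chain complex in $\calA$, so by \ref{chsf} it is Schur-finite in $\mathbf{Ch}^b(\calA)$: there is a partition $\lambda$ with $S_\lambda(P) = 0$ in $\bfC$. Next I would invoke \ref{shurproj}: since $P$ is a complex of projectives, $q(S_\lambda(P)) = S_\lambda(q(P))$, so $S_\lambda(P) = 0$ in $\bfC$ forces $S_\lambda(P) = 0$ in $\bfD$. Finally, since $X \isom q(P)$ in $\bfD$ and $S_\lambda$ is a functor on the tensor category $\bfD$, we get $S_\lambda(X) \isom S_\lambda(q(P)) = 0$, so $X$ is Schur-finite in $\mathbf{D}^b(\calA)$.

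The only real subtlety — and the step I would flag as the main obstacle — is bookkeeping about which derived category we are working in. The lemmas \ref{shurproj} and the surrounding discussion are phrased for $\bfD = \mathbf{D}^-(\calA)$ under the standing assumption that $\calA$ has enough projectives, whereas the statement concerns $\mathbf{D}^b(\calA)$; one must check that a bounded complex with a \emph{finite} projective resolution lands in the bounded derived category and that the tensor product and Schur functors computed in $\mathbf{D}^-(\calA)$ agree with those in $\mathbf{D}^b(\calA)$ on such objects. This is routine: a finite projective resolution is itself a bounded complex of projectives, tensor powers of bounded complexes are bounded, and the projective tensor formula $M \tens_\bfD N = Tot^\oplus(P \tens Q)$ stays within bounded complexes. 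So no essential difficulty arises beyond being careful that the hypothesis ``finite projective resolution'' is exactly what lets us replace $X$ by a bounded complex of projectives and then apply \ref{chsf} and \ref{shurproj} verbatim.
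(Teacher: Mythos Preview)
Your proof is correct and follows essentially the same approach as the paper: replace $X$ by a finite projective resolution $P$, apply \ref{chsf} to get $S_\lambda(P)=0$ in $\bfC$, then apply \ref{shurproj} to conclude $S_\lambda(P)=0$ in $\bfD$. The paper's proof is simply a terser version of yours, omitting the explicit remark that $X\isom P$ in $\bfD$ and the bounded/bounded-below bookkeeping.
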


\begin{proof}
Let $P$ be a finite projective resolution. By \ref{chsf}, there is a
$\lambda$ so that $S_\lambda(P)=0$ in $\bfC$. By \ref{shurproj},
$S_\lambda(P)$ is zero in $\bfD$ as well.
\end{proof}

\begin{lem}\label{2of3}
Let $\bfD$ be the derived category of an abelian {\qtc} $\calA$. Then
Schur-finiteness has the two out of three property.
\end{lem}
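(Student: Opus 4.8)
The plan is to reduce the statement to the extension result \ref{abext} (or its refinement \ref{extevenodd}) applied inside the abelian category $\bfC=\mathbf{Ch}^-(\calA)$, using the fact that a distinguished triangle in $\bfD$ can be represented, after replacing by projective resolutions, by a genuine short exact sequence of complexes of projectives. Concretely, suppose we are given a distinguished triangle $X\to Y\to Z\to X[1]$ in $\bfD$ and that two of the three objects are Schur-finite; we must show the third is. First I would replace $X$, $Y$, $Z$ by bounded-below complexes of projectives (using that $\calA$ has enough projectives, as assumed), and arrange the triangle to come from a short exact sequence $0\to P\to Q\to R\to 0$ in $\bfC$ where $P,Q,R$ are complexes of projectives: for the arrow $X\to Y$ one takes a map of projective resolutions and replaces $Y$ by the mapping cylinder, so that $0\to P\to \mathrm{cyl}\to R\to 0$ is a termwise-split short exact sequence of complexes of projectives whose quotient $R$ represents $Z$. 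Termwise-split is the key point, because it makes every term of the sequence a direct summand, hence flat with respect to $\tens_\bfC$, and it is preserved under all the tensor powers and under applying $c_\lambda$.

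The second step handles each of the three cases. If $P$ and $R$ are Schur-finite in $\bfD$, then by \ref{shurproj} there is a $\lambda$ with $S_\lambda(P)$ and $S_\lambda(R)$ acyclic in $\bfC$; enlarging $\lambda$ we may also assume $S_\lambda(P\oplus R)$ is acyclic (using \ref{sfgen}(3), the direct sum of Schur-finite objects is killed by a common Schur functor). Since the short exact sequence is termwise split, $P$, $R$, and hence $Q$ are flat in $\bfC$, so \ref{abext}—applied in the abelian \qtc{} $\bfC$—shows that the \emph{complex} $S_\lambda(Q)$ is Schur-finite; but Schur-finite complexes are bounded and in fact one gets $S_\lambda(Q)$ acyclic. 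Wait—here I should instead argue directly with the filtration: \ref{extevenodd} gives that $S_\lambda$ of the extension is acyclic once $S_\lambda(P\oplus R)$ is, because the filtration quotients $F_i/F_{i-1}\isom T_{n-i,i}(P,R)$ are direct summands of $(P\oplus R)^{\tens n}$ and \ref{tech} propagates acyclicity up the filtration. Then \ref{shurproj} turns acyclicity of $S_\lambda(Q)$ in $\bfC$ back into $S_\lambda(Q)=0$ in $\bfD$, so $Y\simeq Q$ is Schur-finite. The two remaining cases—$P,Q$ Schur-finite, or $Q,R$ Schur-finite—are symmetric: rotating the triangle, the complex $R$ (resp. $P$) sits in a termwise-split short exact sequence with the other two as sub and quotient, so the same filtration argument applies, now using that $S_\lambda$ is additive on termwise-split sequences of flat complexes and that acyclicity is a two-out-of-three property for short exact sequences.

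The main obstacle I anticipate is the bookkeeping in step one: making sure the triangle really is represented by a termwise-split short exact sequence of complexes of \emph{projectives} (not merely of arbitrary objects), since the filtration machinery of \ref{dfnfiltration}–\ref{filtration} needs the terms to be flat with respect to $\tens_\bfC$, and \ref{shurproj} needs them to be complexes of projectives so that $q(P\tens_\bfC P)=P\tens_\bfD P$. The mapping-cylinder construction delivers exactly this if one starts from projective resolutions, but one has to check that the cylinder of a map of bounded-below complexes of projectives is again such a complex and that the inclusion is termwise split—both are standard but deserve a sentence. A secondary, purely cosmetic point is choosing a single partition $\lambda$ that annihilates all the objects in sight; this is immediate from \ref{sfgen}(2) once one has one for each, by passing to a common larger partition.
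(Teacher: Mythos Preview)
Your proposal is correct and follows essentially the same route as the paper: represent the triangle by a degreewise-split short exact sequence of complexes of projectives, then run the filtration argument of \ref{dfnfiltration}--\ref{filtration} and use \ref{tech} to propagate acyclicity of $S_\lambda$ through the filtration. The paper streamlines your presentation in two minor ways: it uses the cone sequence $0\to Q\to \cone(f)\to P[1]\to 0$ directly (which is automatically degreewise split, so your mapping-cylinder detour and the attendant worry about projectivity are unnecessary), and it treats only the case ``$A,B$ Schur-finite $\Rightarrow$ $C$ Schur-finite'' with an implicit appeal to rotation of triangles for the other two cases.
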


\begin{proof}
Let us consider the triangle $A\to B\to C\to A[1]$. Without loss of
generality, we may assume that $A$ and $B$ are Schur-finite, and we
need to prove that $C$ is such. We may replace $A$ by a projective
resolution $P$, and similarly we replace $B$ with $Q$. 
If $f:P\to Q$, we may assume that $C$ is just $\cone(f)$
and we have a short exact sequence
\begin{equation}\label{conefiltr}
0\to Q\to \cone(f)\to P[1]\to 0
\end{equation}
We will show that $\cone(f)$ is Schur-finite in $\bfD$, i.e.,
$S_\lambda(\cone(f))$ is acyclic for some $\lambda$.

Choose a partition $\lambda$ of $n$ such that the complexes
$S_\lambda(P[1])$, $S_\lambda(Q)$, and $S_\lambda(Q\oplus P[1])$ are
all acyclic.  Since $\calA$ is an abelian \qtc, so is $\bfC$.
Therefore we may use \ref{dfnfiltration} to define a $\Sim_n$-equivariant
filtration $F_i$ of $(\cone(f))^{\tens n}$ coming from the short exact sequence
(\ref{conefiltr}). By \ref{tech},
we have short exact sequences
\[ 0\to c_\lambda(F_{i-1})\to c_\lambda(F_{i})\to c_\lambda(F_i/F_{i-1})\to 0.\] 
Since the sequence (\ref{conefiltr}) splits in every degree,
it still splits degreewise when we tensor with any other object.
Therefore the proof of \ref{filtration} goes through to give that 
$\oplus F_i/F_{i-1}= (Q\oplus P[1])^{\tens n}$. Since
each $F_i/F_{i-1}$ is $\Sim_n$-invariant, we have that $ \oplus
c_\lambda(F_i/F_{i-1}) =c_\lambda (\oplus (F_i/F_{i-1}))= c_\lambda
((Q\oplus P[1])^{\tens n})=S_\lambda(Q\oplus P[1])$, which is acyclic. 
This forces all $c_\lambda(F_i/F_{i-1})$ to be acyclic.  By
hypothesis $c_\lambda(F_0)=S_\lambda(P)$ is acyclic, and therefore 
it follows by recursion that each $c_\lambda(F_i)$ is acyclic. In
particular $c_\lambda(F_n)=S_\lambda(\cone(f))$ is acyclic, so
$S_\lambda(\cone(f))=0$ in $\bfD$.
\end{proof}

\begin{cor}\label{2of3evenodd}
Let $\bfD$ be the derived category of an abelian {\qtc} $\calA$
and let $A\to B\to C\to A[1]$ be a triangle.
If $S_\lambda(A\oplus C)=0$, then $S_\lambda(B)=0$.
In particular, if $A$ and $C$ are odd (respectively, even), then $B$ is odd 
(respectively, even.)
\end{cor}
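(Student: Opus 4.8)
The plan is to rerun the proof of \ref{2of3} for the \emph{middle} term of the triangle, after a rotation, while keeping track of the single given partition $\lambda$. Rotate the triangle $A\to B\to C\to A[1]$ to $C[-1]\to A\to B\to C$, so that $B$ appears as a cone: replacing $A$ by a projective resolution $Q$ and $C[-1]$ by a projective resolution $P$ (so that $P[1]$ resolves $C$), we may assume $B=\cone(g)$ for a chain map $g\colon P\to Q$, with a short exact sequence
\[ 0\to Q\to B\to P[1]\to 0 \]
in $\bfC$ that is split in each degree. The point is that the hypothesis $S_\lambda(A\oplus C)=0$ in $\bfD$ already supplies, for this fixed $\lambda$, everything the argument of \ref{2of3} uses: by \ref{sfgen}(3) both $S_\lambda(A)$ and $S_\lambda(C)$ are direct summands of $S_\lambda(A\oplus C)$, hence vanish in $\bfD$; and then by \ref{shurproj}, applied to the projective resolution $Q\oplus P[1]$ of $A\oplus C$ and to $Q$, the complexes $S_\lambda(Q)$ and $S_\lambda(Q\oplus P[1])$ are acyclic in $\bfC$ (and so is $S_\lambda(P[1])$).

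With this in hand the filtration argument goes through verbatim. Applying \ref{dfnfiltration} to the degreewise-split sequence above gives a $\Sim_n$-equivariant filtration $F_i$ of $B^{\tens n}$; since the sequence stays split after tensoring with anything, the proof of \ref{filtration} yields $F_i/F_{i-1}\isom T_{n-i,i}(Q,P[1])$, hence $\oplus_i F_i/F_{i-1}\isom(Q\oplus P[1])^{\tens n}$. Applying $c_\lambda$ and using that each graded piece is $\Sim_n$-invariant, $\oplus_i c_\lambda(F_i/F_{i-1})\isom S_\lambda(Q\oplus P[1])$ is acyclic, so every $c_\lambda(F_i/F_{i-1})$ is acyclic; by \ref{tech} the sequences $0\to c_\lambda(F_{i-1})\to c_\lambda(F_i)\to c_\lambda(F_i/F_{i-1})\to 0$ are exact, and $c_\lambda(F_0)=S_\lambda(Q)$ is acyclic, so by induction each $c_\lambda(F_i)$ is acyclic. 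In particular $S_\lambda(B)=c_\lambda(F_n)$ is acyclic, i.e.\ $S_\lambda(B)=0$ in $\bfD$. For the last sentence: if $A$ and $C$ are odd, say $Sym^a A=0$ and $Sym^c C=0$, then $Sym^{a+c-1}(A\oplus C)=0$ by \ref{sfgen}(3), so $Sym^{a+c-1}(B)=0$ by the first part; the even case is identical with $\wedge$ in place of $Sym$.

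The only step that takes a little care is the bookkeeping at the start: choosing the rotation that presents $B$ as a cone, and checking that all the acyclicities needed by the proof of \ref{2of3} are consequences of the single assumption $S_\lambda(A\oplus C)=0$ via \ref{sfgen}(3). After that nothing new happens — the construction \ref{dfnfiltration}, the identification \ref{filtration}, and the exactness lemma \ref{tech} are invoked exactly as in the proof of \ref{2of3}.
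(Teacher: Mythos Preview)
Your proof is correct and is exactly what the paper intends by ``Clear from the proof of \ref{2of3}'': you make explicit the rotation that puts $B$ in the cone position, and you verify that the single hypothesis $S_\lambda(A\oplus C)=0$ already supplies (via \ref{sfgen}(3) and \ref{shurproj}) all the acyclicities that the filtration argument of \ref{2of3} consumes. There is nothing to correct or add.
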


\begin{proof}
Clear from the proof of \ref{2of3}.
\end{proof}

\begin{rem}
Lemma \ref{2of3} does not hold for Kimura-finiteness. Let $\bfD$
be the derived category category of finitely generated $\Q[x]$-modules.
We have a triangle $\Q[x]\rTo^x \Q[x]\rTo \Q\rTo \Q[x][1]$
but $\Q$ is not Kimura-finite by \ref{sfnotkf}.
\end{rem}

\begin{exam}
Consider the category $Sh(X)$ of coherent sheaves on a quasi-projective
scheme $X$ over a field containing $\Q$. Replacing ``projective'' 
by ``locally free'', the proofs of \ref{shurproj}-\ref{2of3evenodd} go through.
\end{exam}

\begin{exam}\label{shnis}
Let $\calA=Sh_{Nis}(Cor_k,\Q)$ denote the category of Nisnevich
sheaves of $\Q$-modules with transfers on $Sm/k$. By \cite{notes}
or \cite[p. 206]{tri}, this is an abelian \qtc, and so is $\D^-(\calA)$.
Replacing ``projective'' by ``representable'', we again see that the proofs of 
\ref{shurproj}-\ref{2of3evenodd} go through.
\end{exam}

\section{Applications to Chow motives}



For any adequate equivalence relation (see \cite{jannsenbanff}),
we can construct a category of $\Q$-linear motives. They are 
all {\qtcs} and therefore the notions of Schur-finiteness and
Kimura-finiteness make sense (see \ref{homschur} above). 
Kimura-finiteness has been studied in \cite{kimura}, \cite{yveskahn},
\cite{gp}, \cite{gp2}, and \cite{Gul2}.






\begin{exam}\label{motpn}
It is well known that $M(\Proj^1)=\I\oplus \Lef$. Clearly, the switch
acts as the identity on both $\I\tens \I$ and
$\Lef\tens \Lef$,  and therefore $\Lambda^2\I=\Lambda^2\Lef=0$.
Thus, 
\[ S_\lambda(\Proj^1)=0 \text{ iff $\lambda\supseteq (1,1,1)$}.\]
A similar argument shows that
the motive of $\Proj^n$ is Kimura-finite.
\end{exam}

We cite the following result without proof.

\begin{prop}\label{spcurvekf}
(See \cite[Corollary 4.4]{kimura}.) The motive of every smooth
projective curve is Kimura-finite.
\end{prop}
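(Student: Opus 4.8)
The plan is to decompose the motive $M(C)$ of a smooth projective curve $C$ into its standard Chow--K\"unneth pieces and verify that each piece is either even or odd. Over an algebraically closed field one has the classical decomposition $M(C) = h^0(C) \oplus h^1(C) \oplus h^2(C)$, where $h^0(C) \isom \I$, $h^2(C) \isom \Lef$, and $h^1(C)$ is the motive carved out by the remaining idempotent (after choosing a rational point to split off $h^0$ and $h^2$; in general one works up to a finite base extension and descends, since Schur/Kimura-finiteness is detected by a faithful tensor functor by \ref{fssf}). I would set $M(C)_+ = h^0(C)\oplus h^2(C)$ and $M(C)_- = h^1(C)$, so that the whole problem reduces to two claims: (a) $\I\oplus\Lef$ is even, and (b) $h^1(C)$ is odd, i.e.\ $Sym^n h^1(C) = 0$ for $n$ large.

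For claim (a), the argument of \ref{motpn} applies verbatim: the switch acts as the identity on $\I\tens\I$ and on $\Lef\tens\Lef$, so $\Lambda^2\I = \Lambda^2\Lef = 0$, and by \ref{sfgen}(3) (applied to $\Lambda^3$ of a direct sum) we get $\Lambda^3(\I\oplus\Lef) = 0$; hence $M(C)_+$ is even. For claim (b), the key input is that $h^1(C)$ has ``rank $1$'' in the sense of traces: its realization under any Weil cohomology is $H^1(C)$, which is concentrated in odd degree and has dimension $2g$. Following Kimura, one shows that if $N$ is a Chow motive whose Weil realization is purely of odd weight, then $Sym^{2g+1} N = 0$, because the idempotent cutting out $Sym^{2g+1}$ acts as zero on $H^*$ (the symmetric power of an odd-degree space vanishes past its dimension) and on the motive of a curve homological and rational equivalence of $0$-cycles agree in the relevant degree --- more precisely, one invokes the theorem of the kernel / the fact that the relevant correspondence groups inject into cohomology. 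The clean way to package this is: $Sym^{2g+1} h^1(C)$ is a direct summand of $M(C^{\tens(2g+1)})$, its cohomology vanishes, and the vanishing of cohomology suffices to conclude the summand is zero because the Chow groups involved are computed by cohomology in that range.

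The main obstacle is precisely this last point: passing from ``vanishing in cohomology'' to ``vanishing as a Chow motive.'' This is not formal --- it is false for general motives --- and is the technical heart of Kimura's argument. It rests on Jannsen's semisimplicity together with the fact that for curves the transcendental part $h^1$ behaves like an abelian variety (via the Jacobian), so that endomorphisms and the relevant correspondences are controlled by $H^1$; concretely one uses that $\End(h^1(C)) = \End(\mathrm{Jac}(C))\tens\Q$ injects into $\End(H^1)$, and that the symmetrizer $c_{(1^{2g+1})}$ versus $c_{(2g+1)}$ computation on $H^1$ forces the symmetric-power idempotent to be a nilpotent, hence zero, correspondence. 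One then assembles: $M(C) = (\I\oplus\Lef)\oplus h^1(C)$ with the first summand even and the second odd, which is exactly a Kimura decomposition, so $M(C)$ is Kimura-finite. Since the excerpt explicitly cites \cite[Corollary 4.4]{kimura} and states the result ``without proof,'' I would in fact stop here and refer to Kimura's paper for the delicate comparison step rather than reproduce it.
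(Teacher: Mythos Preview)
The paper gives no proof of this proposition at all: the sentence immediately preceding it reads ``We cite the following result without proof,'' and the statement simply points to \cite[Corollary 4.4]{kimura}. You noticed this yourself in your final sentence, and that is exactly the right disposition --- there is nothing here to compare your argument against.

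Your sketch is nonetheless a fair outline of Kimura's strategy (Chow--K\"unneth decomposition, $h^0\oplus h^2$ even, $h^1$ odd). A couple of inaccuracies are worth flagging if you intend to flesh it out. The phrase ``$h^1(C)$ has rank $1$'' is a slip; its cohomological realization has dimension $2g$. More substantively, the passage from ``$Sym^{2g+1}H^1(C)=0$'' to ``$Sym^{2g+1}h^1(C)=0$'' is not obtained by showing an idempotent is nilpotent via an injection $\End(h^1(C))\hookrightarrow\End(H^1)$ --- that injection is not available in general. Kimura's actual argument is geometric: he uses the structure of the symmetric product $C^{(n)}$ (in particular its relation to the Jacobian for $n>2g-2$) to compute the relevant Chow groups directly and show the symmetrizer vanishes on the nose. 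Your instinct that this is ``the technical heart'' and ``not formal'' is correct, but the mechanism you describe is not the one Kimura uses.
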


\begin{lem}
Let $M$ and $N$ be two motives and let $\lambda$ be a partition of
$d$. If $\Lambda^2(N)=0$, then $S_\lambda(M\tens N)=S_\lambda(M)\tens
N^{\tens d}$.
\end{lem}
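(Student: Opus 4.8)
The plan is to use the fact that when $\Lambda^2(N)=0$, the object $N$ is as "trivial" as possible for the purposes of Schur calculus: the symmetric group acts on $N^{\tens m}$ trivially up to sign, so only the trivial isotypic piece of $M^{\tens d}$ survives after applying the symmetrizer $c_\lambda$ to $(M\tens N)^{\tens d}$. More precisely, $(M\tens N)^{\tens d}$ is isomorphic to $M^{\tens d}\tens N^{\tens d}$ as an object, but the $\Sim_d$-action is the diagonal one, acting simultaneously on the $M$-factors and on the $N$-factors. So I would first identify the $\Sim_d$-equivariant decomposition of $N^{\tens d}$ coming from $\Lambda^2(N)=0$, namely $N^{\tens d}\isom Sym^d(N)$ and the $\Sim_d$-action on $N^{\tens d}$ is trivial (permutations act as the identity), because $\Lambda^2 N=0$ forces the switch on $N\tens N$ to be the identity, and these switches generate $\Sim_d$. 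This is the analogue of the computation in \ref{motpn} for $\Lef$.

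Given that, the second step is the key computation: $c_\lambda\bigl((M\tens N)^{\tens d}\bigr)=c_\lambda(M^{\tens d}\tens N^{\tens d})$ where $\Sim_d$ acts diagonally, but since it acts trivially on $N^{\tens d}$, the diagonal action is just the action on $M^{\tens d}$ tensored with the identity on $N^{\tens d}$. Therefore $c_\lambda$ applied to the tensor product equals $c_\lambda(M^{\tens d})\tens N^{\tens d}$, by $\Q$-bilinearity of $\tens$ and the fact that the $\Sim_d$-module structure on $M^{\tens d}\tens N^{\tens d}$ is the extension of scalars (along $\Q[\Sim_d]\to\Q[\Sim_d]$, identity) of the one on $M^{\tens d}$, with $N^{\tens d}$ a trivial coefficient object. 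This gives $S_\lambda(M\tens N)=c_\lambda(M^{\tens d})\tens N^{\tens d}=S_\lambda(M)\tens N^{\tens d}$, as desired.

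The main obstacle — though it is more of a bookkeeping point than a genuine difficulty — is making precise the claim that applying $c_\lambda$ commutes with tensoring by the "trivial $\Sim_d$-object" $N^{\tens d}$. The cleanest way is to observe that $c_\lambda\in\Q[\Sim_d]$ acts on $M^{\tens d}\tens N^{\tens d}$ through the algebra map $\Q[\Sim_d]\to\End(M^{\tens d})\xrightarrow{-\tens \mathrm{id}_{N^{\tens d}}}\End(M^{\tens d}\tens N^{\tens d})$, the first arrow being the $\Sim_d$-action on $M^{\tens d}$ and the composite being the $\Sim_d$-action on $(M\tens N)^{\tens d}$ once we use $\Lambda^2 N=0$ to kill the action on the $N$-part; since this composite is a ring homomorphism, the idempotent $c_\lambda$ maps to $c_\lambda(M^{\tens d})\tens\mathrm{id}_{N^{\tens d}}$, whose image is exactly $c_\lambda(M^{\tens d})\tens N^{\tens d}=S_\lambda(M)\tens N^{\tens d}$. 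One should also remark that the identification $(M\tens N)^{\tens d}\isom M^{\tens d}\tens N^{\tens d}$ is $\Sim_d$-equivariant for the diagonal action on the right-hand side; this is just the standard symmetry/associativity coherence of the symmetric monoidal structure, and then the observation about the trivial action on $N^{\tens d}$ finishes the reduction.
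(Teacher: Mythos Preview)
Your proof is correct. You argue directly: $\Lambda^2 N=0$ forces the switch on $N\tens N$ to be the identity, hence $\Sim_d$ acts trivially on $N^{\tens d}$; then the diagonal $\Sim_d$-action on $(M\tens N)^{\tens d}\isom M^{\tens d}\tens N^{\tens d}$ reduces to the action on the $M$-factor alone, so the idempotent $c_\lambda$ acts as $c_\lambda^{M}\tens\mathrm{id}$ and its image is $S_\lambda(M)\tens N^{\tens d}$.

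The paper takes a different route. It invokes the general decomposition formula from Proposition~\ref{sfgen}(4),
\[
S_\lambda(M\tens N)\isom\bigoplus_{|\mu|=|\nu|=d}[V_\mu\tens V_\nu:V_\lambda]\,S_\mu(M)\tens S_\nu(N),
\]
observes that $\Lambda^2 N=0$ kills every $S_\nu(N)$ except $S_{(d)}(N)=N^{\tens d}$, and then uses the representation-theoretic fact that $[V_\mu\tens V_{(d)}:V_\lambda]=\delta_{\mu\lambda}$ (since $V_{(d)}$ is the trivial representation). Your argument is more elementary and self-contained: it needs no appeal to the inner-tensor-product multiplicities, only the pseudo-abelian observation that an idempotent with vanishing image is zero and the standard coherence of the symmetric monoidal structure. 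The paper's approach, on the other hand, is shorter once \ref{sfgen} is in hand and illustrates how that general machinery is meant to be used. Both approaches implicitly use the same underlying fact that $\Sim_d$ acts trivially on $N^{\tens d}$ (in the paper's version this is what gives $S_{(d)}(N)=N^{\tens d}$), but you make this the centerpiece rather than a side remark.
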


\begin{proof}
By \ref{sfgen},
\[ S_\nu(M\tens N)=\oplus C_{\lambda,\eta,\nu} S_\lambda(M)\tens S_{\eta}(N),\]
where $|\lambda|=|\eta|=|\nu|=d$ and
$C_{\lambda,\eta,\nu}=[V_\lambda\tens V_\eta:V_\nu]$. In this
case, $S_{\eta}(N)=0$ for all partitions $\eta$ of $d$ except
for $S_{(d)}(N)=N^{\tens d}$.  But $C_{\lambda,(d),\nu}=
C_{\lambda,\nu,(d)}$ which is 1 if $\lambda=\nu$ and it is zero
otherwise. The result is proved.
\end{proof}

\begin{cor}\label{mtensl}
A motive $M$ is Schur-finite if and only if $M\tens \Lef$ is Schur-finite.
\end{cor}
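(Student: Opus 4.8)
The plan is to apply the preceding lemma with $N = \Lef$, using the fact (noted in Example \ref{motpn}) that $\Lambda^2\Lef = 0$. In the forward direction, suppose $M$ is Schur-finite, say $S_\lambda(M) = 0$ for some partition $\lambda$ of $d$. By the lemma, $S_\lambda(M \tens \Lef) = S_\lambda(M) \tens \Lef^{\tens d} = 0$, so $M \tens \Lef$ is Schur-finite. For the converse, suppose $M \tens \Lef$ is Schur-finite, say $S_\mu(M \tens \Lef) = 0$ for some partition $\mu$ of $e$. Applying the lemma again with $N = \Lef$ gives $S_\mu(M) \tens \Lef^{\tens e} = 0$. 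Since $\Lef$ is an invertible object (it is $\tens$-invertible in the category of motives, with inverse the Tate twist $\Lef^{-1}$), tensoring with $\Lef^{-e}$ yields $S_\mu(M) = 0$, so $M$ is Schur-finite.

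The one point that needs a word of justification is the cancellation step: from $S_\mu(M) \tens \Lef^{\tens e} = 0$ we want to conclude $S_\mu(M) = 0$. This is immediate because $\Lef$ is invertible for $\tens$, so the functor $- \tens \Lef^{\tens e}$ is an autoequivalence of the category of motives and in particular is faithful (indeed conservative). Alternatively, one can invoke Lemma \ref{fssf} applied to the {\qtf} $- \tens \Lef^{\tens e}$, which is faithful. Either way, $S_\mu(M) \tens \Lef^{\tens e} = 0$ forces $S_\mu(M) = 0$.

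I expect no real obstacle here: the substantive content is entirely contained in the preceding lemma, and the corollary is just the observation that $\Lef$ satisfies its hypothesis ($\Lambda^2\Lef = 0$) and is moreover invertible so that the conclusion can be run in both directions. The only thing to be careful about is to state explicitly that $\Lef$ is $\tens$-invertible, which is standard for Chow (or numerical, or homological) motives.
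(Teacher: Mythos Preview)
Your argument is correct and is exactly what the paper has in mind: the corollary is stated without proof, as an immediate consequence of the preceding lemma (applied with $N=\Lef$, using $\Lambda^2\Lef=0$ from Example~\ref{motpn}) together with the invertibility of $\Lef$ in the category of Chow motives. One minor quibble: your alternative justification via Lemma~\ref{fssf} does not work as written, since the functor $-\tens\Lef^{\tens e}$ is not a $\Q$-linear tensor functor in the sense of the paper (it fails to send the unit object $\I$ to $\I$); the direct argument that $-\tens\Lef^{\tens e}$ is an autoequivalence, hence conservative, is the right one.
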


\begin{cor}\label{bup}
Let $X_Y$ be the blowup of a smooth projective variety $X$ along a
pure codimension $r$ smooth subvariety $Y$. If $M(X_Y)$ is
Schur-finite, then both $M(X)$ and $M(Y)$ are Schur-finite.
Conversely, if $M(X)$ and $M(Y)$ are both Schur-finite, then $M(X_Y)$ is
Schur-finite.
\end{cor}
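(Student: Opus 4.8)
The plan is to use the classical blowup formula for motives together with the closure properties of Schur-finiteness already established in this paper. Recall that for the blowup $X_Y$ of $X$ along a smooth center $Y$ of pure codimension $r$, there is a direct sum decomposition
\[ M(X_Y) \isom M(X) \oplus \bigoplus_{i=1}^{r-1} M(Y)(i)[2i], \]
where $M(Y)(i)[2i] = M(Y)\tens \Lef^{\tens i}$ in the notation for Chow motives. This is the Manin blowup identity, valid in the category $\calM_r$ (and more generally modulo any adequate equivalence).

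For the forward direction, suppose $M(X_Y)$ is Schur-finite. Since $\calM_r$ is pseudo-abelian, $M(X)$ is a direct summand of $M(X_Y)$, so it suffices to know that a direct summand of a Schur-finite object is Schur-finite; this follows from \ref{sfgen}(2)--(3), since if $M(X_Y) = N \oplus N'$ then $S_\lambda(M(X_Y))$ contains $S_\lambda(N)$ as a summand, so $S_\lambda(M(X_Y)) = 0$ forces $S_\lambda(N)=0$. Hence $M(X)$ is Schur-finite. Likewise $M(Y)(r-1)[2(r-1)] = M(Y)\tens\Lef^{\tens(r-1)}$ is a direct summand of $M(X_Y)$, hence Schur-finite, and then $M(Y)$ is Schur-finite by iterating \ref{mtensl}.

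For the converse, suppose $M(X)$ and $M(Y)$ are Schur-finite. By \ref{mtensl} each twist $M(Y)\tens\Lef^{\tens i}$ is Schur-finite, and $\Lef$ itself is Schur-finite by \ref{motpn}. By \ref{kfsf}, Schur-finiteness is closed under finite direct sums, so the right-hand side of the blowup formula is Schur-finite, and therefore so is $M(X_Y)$.

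The only real point requiring care is citing the blowup formula in the stated generality and over the correct category of motives; everything else is an immediate application of \ref{sfgen}, \ref{kfsf}, \ref{motpn}, and \ref{mtensl}. I expect no genuine obstacle — the corollary is essentially a formal consequence of the motivic decomposition of a blowup combined with the closure properties already in hand.
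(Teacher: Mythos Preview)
Your proof is correct and follows exactly the same route as the paper: invoke Manin's blowup decomposition
\[
M(X_Y)=M(X)\oplus\bigoplus_{i=1}^{r-1} M(Y)\tens\Lef^{\tens i}
\]
and then apply the closure properties of Schur-finiteness under direct summands, direct sums, and tensoring with $\Lef$. The paper's own proof is terser---it simply records the formula from \cite{manin} and leaves the applications of \ref{kfsf} and \ref{mtensl} implicit---so your version is just a more explicit rendering of the same argument. (The appeal to \ref{motpn} is harmless but unnecessary, since \ref{mtensl} already handles the twists directly.)
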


\begin{proof}
Just recall from \cite{manin} that if $X_Y$ is the blowup of a variety $X$
along a pure codimension $r$ subvariety $Y$, we have that:
\begin{equation*}
M(X_Y)=M(X)\oplus \left( \bigoplus_{i=1}^{r-1} M(Y)\tens \Lef^{\tens i}\right).\qedhere
\end{equation*}
\end{proof}

\begin{cor}\label{birat}
Schur-finiteness is a birational invariant for smooth projective surfaces.
\end{cor}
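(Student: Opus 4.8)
The plan is to use the weak factorization theorem for birational maps between smooth projective surfaces: any birational map between smooth projective surfaces over a field factors as a composition of blow-ups and blow-downs at closed points. Over a surface, a smooth center $Y$ of pure codimension $r$ with $r\geq 2$ is necessarily a finite set of closed points, so $r=2$ and $Y=\Spec$ of a product of finite field extensions. Thus the only admissible blow-ups are those at (reduced) points, and Corollary \ref{bup} applies in this restricted form.

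First I would recall that for a point $P$ on a smooth projective surface $X$, the blow-up formula from \cite{manin} (as used in \ref{bup}) gives $M(X_P)=M(X)\oplus (M(P)\tens\Lef)$, where $M(P)$ is the motive of a zero-dimensional smooth scheme, hence a finite direct sum of Artin motives; in particular $M(P)$ is even (the switch acts trivially on $M(P)^{\tens 2}$, as in \ref{motpn}), so $M(P)$ is Schur-finite, and by \ref{mtensl} so is $M(P)\tens\Lef$. Consequently $M(X_P)$ is Schur-finite if and only if $M(X)$ is: one direction is Corollary \ref{bup} directly, and the converse follows because $M(X)$ is a direct summand of $M(X_P)$, and Schur-finiteness passes to direct summands (a direct summand of a Schur-finite object is Schur-finite, since if $X=A\oplus B$ then $S_\lambda(A)$ is a summand of $S_\lambda(X)$ by \ref{sfgen}(3); alternatively this is immediate from \ref{fssf} applied to the idempotent-completion inclusion, or one simply notes $S_\lambda(X)=0\Rightarrow S_\lambda(A)=0$).

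Next I would run the weak factorization argument. Given two birationally equivalent smooth projective surfaces $X$ and $X'$, weak factorization produces a chain $X=X_0 \dashrightarrow X_1 \dashrightarrow \cdots \dashrightarrow X_m=X'$ in which each $X_{i}\dashrightarrow X_{i+1}$ is either a blow-up at a point or the inverse of one. By the previous paragraph, $M(X_i)$ is Schur-finite if and only if $M(X_{i+1})$ is, for every $i$. Chaining these equivalences gives $M(X)$ Schur-finite $\iff$ $M(X')$ Schur-finite, which is the assertion.

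The main obstacle is purely one of inputs rather than of argument: one must invoke (and cite) weak factorization for surfaces, which in the surface case is classical and elementary (it goes back to Zariski and can be found in standard references on algebraic surfaces), so no high-powered resolution input is needed; the subtlety to be careful about is that the centers arising are exactly reduced closed points, which is why $M(P)$ is a sum of Artin motives and hence even — this is what makes the blow-up term Schur-finite and lets \ref{bup} and \ref{mtensl} close the loop. If one wished to avoid weak factorization, an alternative is to use the fact that any birational morphism of smooth projective surfaces is a composition of point blow-ups (this direction is truly classical), together with the observation that any birational map is dominated by such morphisms from a common smooth model; I would present the weak-factorization version as the cleanest route and mention this alternative in a remark.
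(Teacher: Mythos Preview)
Your proposal is correct and follows essentially the same route as the paper: the paper's proof is simply the one-line observation that birationally equivalent surfaces are connected by a chain of blow-ups and blow-downs at points, implicitly invoking Corollary~\ref{bup}. You supply the details the paper leaves tacit (that the center $P$ has Schur-finite motive, and the precise invocation of factorization), but the argument is the same.
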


\begin{proof}
If two surfaces are birationally equivalent, then there
is a sequence of blow-ups and blow-downs along points which connects them.
\end{proof}


The Kimura-finite analogues of \ref{mtensl}, \ref{bup} and \ref{birat}
were established in \cite{kimura}.

\section{Applications to the category $\DM$}

Let $\DM=\mathbf{DM}^{eff,-}_{Nis}(k,\Q)$ be the tensor triangulated
category of $\Q$-linear motives, i.e., the localization by $\A^1$-weak
equivalences of the derived category
of (cochain) complexes of Nisnevich sheaves
$\D^-=\mathbf{D}^-(Sh_{Nis}(Cor_k,\Q))$.  Recall that the tensor
structure is given by the localization of $\tens^{tr}_{L,Nis}$
(see \cite[9.5, 14.2 and 14.22]{notes}). We
write $q$ for the localization functor $\D^-\to \DM$
and $i$ for the adjoint embedding $\DM\to \D^-$. By \ref{shnis} $\D^-$ is
a \qtc, and therefore $\DM$ is a {\qtc} as well.

\begin{rem}
By \cite[p. 197]{tri} we have a faithful {\qtf} from the category
of classical Chow motives $\calM_r$
to $\DM$. Therefore, proving that a motive is Schur-finite in 
$\DM$ is equivalent to proving it in the category $\calM_r$
of Chow motives.
\end{rem}

\begin{lem}\label{basechange}
Let $M$ be a motive over $k$ and let $K$ be a finite 
extension of $k$. Let $M_K$ be the corresponding motive
over $K$. If $M_K$ is Schur-finite (respectively, Kimura-finite),
then $M$ is Schur-finite (respectively, Kimura-finite).
\end{lem}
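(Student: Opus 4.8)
The plan is to exploit the transfer (corestriction) map associated with the finite extension $K/k$, in analogy with the classical argument for finite-dimensionality of motives under finite base change. Write $f\colon \Spec K\to \Spec k$ for the structure morphism, with $f_*=f_!$ the pushforward (which on motives is the finite correspondence given by the graph of $f$, transposed) and $f^*$ the pullback. The key point is that $f^*f_*\colon M\to M$, or rather the composite $M\xrightarrow{\mathrm{unit}} f_*f^*M=f_*M_K\xrightarrow{\mathrm{counit}} M$, is multiplication by $[K:k]$ on $M$, hence an isomorphism since we are working $\Q$-linearly. Thus $M$ is a direct summand of $f_*(M_K)$: the idempotent $\tfrac{1}{[K:k]}(\text{unit}\circ\text{counit})$ (suitably arranged) cuts out $M$ inside $f_*(M_K)$.

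Next I would observe that $f_*$ itself is (the restriction to a full subcategory of) a $\Q$-linear tensor functor, or at least that $f_*(M_K)$ is built from $M_K$ in a way that preserves Schur-finiteness and Kimura-finiteness. Concretely, $f_*$ is $\Q$-linear and additive, and although it is not symmetric monoidal, the motive $f_*(N)$ for $N$ a motive over $K$ is obtained by viewing the correspondences defining $N$ as correspondences over $k$ (restriction of scalars is exact and $\Q$-linear); one should check that $f_*$ commutes with tensor powers up to the Galois-twisted action, but in fact the cleanest route is: $M_K$ Schur-finite (resp. Kimura-finite) $\Rightarrow$ $f_*(M_K)$ Schur-finite (resp. Kimura-finite) $\Rightarrow$ the direct summand $M$ is Schur-finite (resp. Kimura-finite) by Corollary \ref{kfsf} (Schur- and Kimura-finiteness pass to direct summands, being closed under the relevant operations and, for summands, trivially inherited).

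So the two ingredients are: (i) $M$ is a direct summand of $f_*(M_K)$ — this is the standard trace/transfer argument and uses only $\Q$-coefficients and the projection formula, and (ii) $f_*$ preserves Schur- and Kimura-finiteness. For (ii) the honest statement is that for any finite extension $L/k$ inside a Galois closure, $f_*(N)$ after base change to the Galois closure becomes a direct sum of Galois conjugates of $N$; since Schur-finiteness and Kimura-finiteness are stable under direct sums (Corollary \ref{kfsf}) and under the Galois conjugation (which is an equivalence of tensor categories, hence a faithful $\Q$-linear tensor functor, so Lemma \ref{fssf} applies for the Schur case, and conjugation clearly preserves evenness/oddness for the Kimura case), one deduces the claim after faithfully flat descent of the finiteness property along $L/k$ — but we only need it for $M$ itself, and $M$ is already defined over $k$, so we can instead run the transfer argument directly and cite Lemma \ref{fssf} for Schur-finiteness applied to the faithful functor "base change to $K$" together with the summand remark.

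The main obstacle is purely bookkeeping around $f_*$: it is not a tensor functor, so one cannot invoke Lemma \ref{fssf} verbatim to go from $M_K$ to $f_*(M_K)$. The fix is to make precise that $f_*(M_K^{\otimes n})$ contains $(f_*M_K)^{\otimes n}$ — or more usefully, that $f_*(S_\lambda(M_K))$ is a summand of $f_*(M_K^{\otimes n})$ and that this in turn, after the transfer idempotent, controls $S_\lambda(M)$ — so that vanishing of $S_\lambda(M_K)$ forces vanishing of $S_\lambda(M)$. I expect the shortest correct writeup to avoid $f_*$ on objects over $K$ altogether and instead argue: base change $-\otimes_k K$ is a faithful $\Q$-linear tensor functor on the subcategory of motives that are summands of $M$-type objects (faithfulness because $f^*f_* = [K{:}k]\cdot\mathrm{id}$ is invertible $\Q$-linearly), hence by Lemma \ref{fssf} $S_\lambda(M_K)=0$ implies $S_\lambda(M)=0$; the Kimura case follows because the decomposition $M_K=(M_K)_+\oplus(M_K)_-$ can be transported back using the transfer projector, since $f^*$ and $f_*$ are additive and the transfer splitting is compatible with the even/odd summands. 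Verifying that compatibility (that the even and odd parts descend) is the one genuinely delicate step, and I would handle it by noting uniqueness of the Kimura decomposition up to the relevant ambiguity, or by pulling back the idempotents defining $(M_K)_\pm$ and averaging.
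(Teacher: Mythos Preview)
Your core strategy matches the paper's: both set up the adjoint pair of base-change functors and use the transfer argument (valid over $\Q$) to exhibit $M$ as a direct summand of the pushforward $\psi(M_K)$. The paper, however, is far more direct. For Schur-finiteness it simply asserts that $\psi:PreSh(Cor_K,\Q)\to PreSh(Cor_k,\Q)$ is itself a $\Q$-linear tensor functor (by the construction in \cite[1.12]{notes}), applies the forward direction of Lemma~\ref{fssf} to conclude $\psi(M_K)$ is Schur-finite, and then passes to the summand $M$. Your long detour --- worrying that $f_*$ is not monoidal, invoking Galois closures, and finally switching to the faithfulness of $f^*$ together with the converse direction of Lemma~\ref{fssf} --- reaches the same conclusion by an equivalent mechanism, but the paper's route makes all of that unnecessary. (Your final formulation via the faithful tensor functor $f^*$ is, to be fair, a perfectly valid alternative.)

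For Kimura-finiteness the gap in your sketch is real. The paper does not attempt to descend the even/odd decomposition or ``average idempotents'' as you propose; it simply cites \cite[3.11]{gp}, which is precisely the statement that direct summands of Kimura-finite motives are Kimura-finite. Your suggested handling via ``uniqueness of the Kimura decomposition up to the relevant ambiguity'' is not something one can just assert --- that uniqueness is itself a nontrivial theorem in this setting, and carrying it out would amount to reproving the Guletski\u{\i}--Pedrini result the paper invokes.
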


\begin{proof}
Since we are working with $\Q$ coefficients, the proof of \cite[1.12]{notes} 
goes through in this setting and we have $\Q$-linear adjoint functors
\[ PreSh(Cor_k,\Q)\rTo^\phi PreSh(Cor_K,\Q)\rTo^\psi 
PreSh(Cor_k,\Q),\]
where $M_K=\phi(M)$ and $M$ is a direct summand of $\psi(M_K)$. 
Since $\psi$ is a \qtf, the result for Schur-finiteness 
follows from Lemma \ref{fssf}. In the Kimura-finite case,
we conclude by \cite[3.11]{gp}.
\end{proof}

Note that $q$ is a \qtf, but it is not faithful. Therefore if
$S_\lambda(qA)=0$ in $\DM$, then $S_\lambda(A)$ need only be
$\A^1$-weak equivalent to $0$ in $\D^-$. Note also that $i$ is not a \qtf.

\begin{prop}\label{2of3dm}
Schur-finiteness has the two out of three property in $\DM$.
\end{prop}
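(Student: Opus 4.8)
The plan is to reduce the statement to Lemma \ref{2of3} in the derived category $\D^-$, by lifting triangles along the localization functor $q\colon\D^-\to\DM$ and exploiting that $\DM$ is a localization of $\D^-$ by the class of $\A^1$-weak equivalences.

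First I would take a triangle $A\to B\to C\to A[1]$ in $\DM$ with, say, $A$ and $B$ Schur-finite; as in the proof of \ref{2of3} we may assume $C=\cone(f)$ for some morphism $f$ in $\DM$. By the remark preceding the statement, $S_\lambda(qA)=0$ in $\DM$ precisely when $S_\lambda(A')$ is $\A^1$-weak equivalent to $0$ in $\D^-$, where $A'=i(A)$ (or any lift of $A$). So the first step is to choose objects $A', B'$ in $\D^-$ with $q(A')\cong A$, $q(B')\cong B$, and a morphism $f'\colon A'\to B'$ in $\D^-$ lifting $f$. Here one must be careful: $q$ is full on the relevant hom-sets up to inverting weak equivalences, so after possibly replacing $A'$ by an $\A^1$-weakly equivalent object we can realize $f$ by an honest morphism $f'$ in $\D^-$. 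Set $C'=\cone(f')$, so that $q(C')\cong C$ and we have a genuine triangle $A'\to B'\to C'\to A'[1]$ in $\D^-$.

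Next, since $S_\lambda(qA')=0$ and $S_\lambda(qB')=0$ in $\DM$, the complexes $S_\lambda(A')$ and $S_\lambda(B')$ are $\A^1$-weakly equivalent to $0$ in $\D^-$ — but this is weaker than being $0$ in $\D^-$, so I cannot apply \ref{2of3} directly to $\D^-$. The fix is to work in the Verdier quotient $\DM$ itself, observing that the whole proof of \ref{2of3} is formal: it uses only that $\bfD$ (here $\DM$) is the derived category, i.e. a triangulated \qtc, and that a cone fits in a short exact sequence splitting degreewise. Since $\DM$ is a \qtc\ and the cone construction there is the image of the cone in $\D^-$, the filtration argument of \ref{dfnfiltration}–\ref{filtration} applies verbatim inside $\DM$: one builds the $\Sim_n$-equivariant filtration $F_i$ of $\cone(f)^{\tens n}$ from the degreewise-split sequence $0\to B'\to\cone(f')\to A'[1]\to 0$ in $\D^-$ and pushes it to $\DM$. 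Choosing $\lambda$ with $S_\lambda(A)=S_\lambda(B)=S_\lambda(A\oplus B[1])=0$ in $\DM$ (possible by \ref{sfgen}(2) after enlarging $\lambda$), the identity $\bigoplus_i F_i/F_{i-1}\cong (B'\oplus A'[1])^{\tens n}$ survives localization, $c_\lambda$ kills each graded piece, and the short exact sequences $0\to c_\lambda(F_{i-1})\to c_\lambda(F_i)\to c_\lambda(F_i/F_{i-1})\to 0$ — valid in $\DM$ by \ref{tech} applied in $\D^-$ and then localized — give by induction $c_\lambda(F_n)=S_\lambda(\cone(f))=0$ in $\DM$. The cases where $B,C$ or $A,C$ are the given Schur-finite objects are handled by the analogous triangles, exactly as in \ref{2of3}.

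The main obstacle I expect is the first step: lifting the morphism $f$ and the whole triangle from $\DM$ to $\D^-$ in a way compatible with the tensor structure, since $q$ is not faithful and the tensor product in $\DM$ is the localization of the derived tensor product. One has to make sure that the degreewise-split sequence in $\D^-$ that feeds the filtration is genuinely split in $\D^-$ (so that it stays split after tensoring, which is what makes \ref{filtration}'s proof go through), not merely split in $\DM$; replacing $f'$ by the degreewise inclusion $B'\to\cone(f')$ handles this, as in \ref{2of3}. Once the lifting and the degreewise-splitting are in place, the rest is the same filtration-and-induction argument as \ref{2of3}, now carried out with ``acyclic'' replaced by ``$\A^1$-weakly equivalent to $0$'', i.e. ``zero in $\DM$''.
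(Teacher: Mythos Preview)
Your approach is the paper's approach: lift the triangle to $\D^-$, build the $\Sim_n$-equivariant filtration of $\cone(f')^{\tens n}$ in the abelian category $\bfC=\mathbf{Ch}^-(Sh_{Nis}(Cor_k,\Q))$, and run the induction of \ref{2of3} with ``acyclic'' replaced by ``$\A^1$-weakly equivalent to $0$''. Two points deserve tightening.

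First, the paper makes explicit a step you gloss over: it replaces $A$ and $B$ by quasi-isomorphic complexes $P,Q$ that are degreewise direct sums of representables $\Q_{tr}(X)$ (the substitute for projective resolutions in this setting, cf.\ \ref{shnis}). This is not cosmetic. The tensor in $\DM$ is the localization of the \emph{derived} tensor $\tens^{tr}_{L,Nis}$, so the naive degreewise tensor $\cone(f')^{\tens n}$ in $\bfC$ only computes the correct object of $\DM$---and hence $q(c_\lambda(\cone(f')^{\tens n}))=S_\lambda(C)$---when the terms are flat for $\tens^{tr}$, i.e.\ representables. Your sentence ``the identity $\bigoplus_i F_i/F_{i-1}\cong (B'\oplus A'[1])^{\tens n}$ survives localization'' is exactly the step that needs this hypothesis; without it the filtration you build has no relation to $S_\lambda$ as computed in $\DM$. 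You flag the tensor compatibility as an obstacle but do not say how you resolve it.

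Second, a matter of language: the degreewise-split short exact sequence $0\to Q\to\cone(f')\to P[1]\to 0$ and the filtration $F_*$ live in the abelian category $\bfC$, not in the triangulated category $\D^-$ as you write in a couple of places. Once you place the construction in $\bfC$ and insist on representable entries, your argument coincides with the paper's.
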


\begin{proof}
Consider the triangle $A\to B\to C\to A[1]$ in $\DM$. We may assume
that $A$ and $B$ are Schur-finite, and we need to prove that $C$ is
such.  Choose an integer $n$ and
a partition $\lambda$ of $n$ such that
$S_\lambda(A[1]\oplus B)=0$ in $\DM$. We will show that
$S_\lambda(C)=0$ in $\DM$.

Applying $i$ to the triangle above yields a triangle in $\D^-$,
but $S_\lambda(iA[1]\oplus 
iB)$ may only be $\A^1$-weakly equivalent to $0$ in $\D^-$. Let us replace $A$
and $B$ by quasi-isomorphic complexes $P$ and $Q$, respectively, which
are sums of representables of the form $\mathbb{Q}_{tr}(X)$ in each degree. If $f:P\to Q$, we
may assume that $C$ is just $\cone(f)$ and we have a short exact
sequence in $\bfC=\mathbf{Ch}^-(Sh_{Nis}(Cor_k,\Q))$
\[ 0\to Q\to \cone(f)\to P[1]\to 0.\]
By \ref{shnis}, $\bfC$ is an abelian {\qtc}. 
Consider the filtration $F_*$ of $\cone(f)^{\tens n}$ given by
\ref{dfnfiltration}. By \ref{tech}, we have short exact sequences in
$\bfC$
\[ 0\to c_\lambda(F_{i-1})\to c_\lambda(F_i)\to c_\lambda(F_i/F_{i-1})\to 0.\]
We know by hypothesis that $c_\lambda(F_0)=c_\lambda((P[1])^{\tens
  n})$ is $\A^1$-weakly equivalent to zero. By \ref{filtration}, all
$c_\lambda(F_i/F_{i-1})$ are $\A^1$-weak equivalent to zero.  Hence we
use induction to conclude that $S_\lambda(\cone(f))= c_\lambda(F_n)$
is $\A^1$-weakly equivalent to zero. But then $S_\lambda(\cone(f))=0$ in $\DM$.
\end{proof}

\begin{cor}\label{sfthick}
The subcategory of $\DM$ consisting of Schur-finite objects
is thick and closed under twists.
\end{cor}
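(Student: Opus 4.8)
The plan is to verify the two defining properties of a thick subcategory—closure under triangles (more precisely, the two-out-of-three property on triangles) and closure under direct summands—together with closure under the Tate twist $-\otimes\Lef$ (or $-(1)[2]$). The two-out-of-three property is already in hand: it is exactly Proposition \ref{2of3dm}. So the real content is (a) closure under direct summands and (b) closure under twists, plus the bookkeeping that these three facts together give thickness.

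First I would handle closure under direct summands. Suppose $M = M' \oplus M''$ with $M$ Schur-finite, say $S_\lambda(M)=0$ for some partition $\lambda$ of $n$. By Proposition \ref{sfgen}(3), $S_\lambda(M'\oplus M'') \cong \bigoplus [\lambda{:}\mu,\nu]\,(S_\mu(M')\tens S_\nu(M''))$, and in particular $S_\lambda(M')$ occurs as a direct summand (take $\mu=\lambda$, $\nu$ empty, using the multiplicity-one term $S_\lambda(M')\tens S_\emptyset(M'') = S_\lambda(M')\tens\I$). Since $\DM$ is pseudo-abelian and $S_\lambda(M)=0$, every direct summand of $S_\lambda(M)$ vanishes, hence $S_\lambda(M')=0$; so $M'$ is Schur-finite. (Strictly, one should phrase \ref{sfgen}(3) so that an empty factor is allowed, or equivalently observe that $S_\lambda$ applied to $M'\oplus M''$ with $S_\lambda(M)=0$ forces $S_\lambda(M')=0$ by the functoriality of $S_\lambda$ on the split inclusion $M'\hookrightarrow M$—a split mono stays split after applying the functor $S_\lambda$, giving $S_\lambda(M')$ as a retract of $S_\lambda(M)=0$.)

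Next, closure under twists: since $\Lef$ is the reduced motive of $\Proj^1$ and the switch acts as the identity on $\Lef\tens\Lef$, we have $\Lambda^2\Lef = 0$ (as recalled in Example \ref{motpn}). Then Corollary \ref{mtensl} says precisely that $M$ is Schur-finite if and only if $M\tens\Lef$ is Schur-finite; iterating gives closure under $\Lef^{\tens i}$ for all $i\ge 0$, and since $\Lef$ is $\tens$-invertible in $\DM$ (or at least since \ref{mtensl} is an "if and only if"), we also get closure under the inverse twists. This is what "closed under twists" means here.

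Finally I would assemble the pieces: a full triangulated subcategory that is closed under isomorphisms, contains $0$, is stable under shifts (immediate from \ref{2of3dm} applied to the rotation $A\to 0\to A[1]\to A[1]$, or simply because $S_\lambda(A[1]) \cong \barI^{n}\tens S_{\lambda'}(A)$ up to the super-sign bookkeeping, so $S_\lambda(A)=0 \iff S_{\lambda'}(A[1])=0$), satisfies two-out-of-three on triangles (\ref{2of3dm}), and is closed under direct summands, is by definition a thick subcategory. The one subtlety worth a sentence is the shift-stability claim: rather than invoking the super-transpose identity, the cleanest argument is to apply \ref{2of3dm} to the triangle $A \to 0 \to A[1] \to A[1]$, which shows $A$ Schur-finite $\Rightarrow A[1]$ Schur-finite, and to the triangle $A[-1]\to 0\to A\to A$ for the other direction. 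I expect no genuine obstacle here; the only place to be careful is making the direct-summand argument rigorous in a merely pseudo-abelian (not abelian) setting, where one must use that $S_\lambda$, being a composite of $(-)^{\tens n}$ and the idempotent-cutting functor $c_\lambda$, carries split monomorphisms to split monomorphisms—so a retract of the zero object is zero.
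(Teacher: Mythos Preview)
Your argument is correct and essentially fills in what the paper leaves implicit: in the paper the corollary is stated immediately after Proposition~\ref{2of3dm} with no proof, so the intended justification is precisely the combination you give---the two-out-of-three property from \ref{2of3dm}, closure under summands from the functoriality of $S_\lambda$ (your split-mono argument is the cleanest formulation), and closure under twists because $\Lef$ is Schur-finite.

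A couple of minor points worth tightening. First, in this paper $\DM$ denotes the \emph{effective} category $\DM^{eff,-}_{Nis}(k,\Q)$, so $\Lef$ is not $\tens$-invertible there; your parenthetical hedge via \ref{mtensl} is what actually does the work, and in fact for ``closed under twists'' in the effective setting only the forward direction is needed, which already follows from Corollary~\ref{kfsf} (Schur-finiteness is closed under $\tens$) together with $\Lambda^2\Lef=0$. Second, \ref{mtensl} is stated in the Chow-motives section, but its proof uses only \ref{sfgen} and so holds verbatim in any $\Q$-linear tensor category, including $\DM$; you might say so explicitly if you invoke it. With these cosmetic adjustments the write-up is complete.
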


\begin{cor}\label{dmevenodd}
Let $A\to B\to C\to A[1]$ be a triangle in $\DM$.
If $S_\lambda(A\oplus C)=0$, then $S_\lambda(B)=0$.
In particular, if $A$ and $C$ are even (respectively, odd) then
$B$ is even (respectively, odd).
\end{cor}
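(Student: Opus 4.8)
The plan is to observe that this corollary is the exact analogue of \ref{2of3evenodd} for the category $\DM$, and that the proof of \ref{2of3dm} already contains everything we need. First I would note that the statement ``if $S_\lambda(A\oplus C)=0$ then $S_\lambda(B)=0$'' is literally what the argument in \ref{2of3dm} produces: given the triangle $A\to B\to C\to A[1]$, rotate it to $C[-1]\to A\to B\to C$ so that $B$ sits as the cone of a map $C[-1]\to A$ (equivalently, keep the original triangle and write $B$ as the cone of $A\to B$ in the appropriate rotation), replace $A$ and $C[-1]$ by complexes $P$ and $Q$ that are degreewise sums of representables $\Q_{tr}(X)$, and form the degreewise-split short exact sequence $0\to P\to \cone\to Q\to 0$ in $\bfC$ whose cone computes $B$.

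Next I would carry over the filtration argument verbatim: by \ref{dfnfiltration} we get a $\Sim_n$-equivariant filtration $F_*$ of $\cone^{\tens n}$, and by \ref{tech} short exact sequences $0\to c_\lambda(F_{i-1})\to c_\lambda(F_i)\to c_\lambda(F_i/F_{i-1})\to 0$ in $\bfC$. The hypothesis $S_\lambda(A\oplus C)=0$ in $\DM$ means that $c_\lambda$ applied to $(P\oplus Q)^{\tens n}$ is $\A^1$-weakly equivalent to zero in $\D^-$ (after a shift, absorbing the $[1]$ as in \ref{2of3dm}); by \ref{filtration} each graded piece $c_\lambda(F_i/F_{i-1})$ is a summand of this, hence $\A^1$-weakly equivalent to zero; then induction up the filtration, starting from $c_\lambda(F_0)$, shows $c_\lambda(F_n)=S_\lambda(B)$ is $\A^1$-weakly equivalent to zero, i.e. $S_\lambda(B)=0$ in $\DM$. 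For the ``in particular'' clause, apply this with $\lambda=(1,1,\ldots,1)$ of length $n$: if $A$ and $C$ are both even, choose $n$ with $\Lambda^n A=\Lambda^n C=0$ and (using \ref{sfgen}(3)) also $\Lambda^n(A\oplus C)=0$, so $\Lambda^n B=0$ and $B$ is even; the odd case is identical with $\lambda=(n)$.

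Since all of this is a direct transcription of the proof of \ref{2of3dm}, the actual write-up is a one-line ``Clear from the proof of \ref{2of3dm}'' just as \ref{2of3evenodd} is deduced from \ref{2of3}. The only point that deserves a moment's care — and hence the main (minor) obstacle — is making sure the shift by $[1]$ is handled consistently: in \ref{2of3dm} the roles of the three objects in the triangle are $(A,B,C)$ with $B$ the cone of a map $A\to B$ wait, rather $C=\cone(A\to B)$; here we instead want $B$ to be the middle term, so one must rotate the triangle first and track which object picks up the shift. Once the triangle is rotated so that $B=\cone(g)$ for $g\colon C[-1]\to A$, the hypothesis on $S_\lambda(A\oplus C)$ matches the ``$S_\lambda(Q\oplus P[1])$'' hypothesis of \ref{2of3dm} after the standard identification $S_\mu(Z[1])\isom S_{\mu'}(Z)[|\mu|]$ (up to sign conventions on the symmetry, which are irrelevant for vanishing), and the argument closes.

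\begin{proof}
Clear from the proof of \ref{2of3dm}, after rotating the triangle so that $B$ appears as the cone of a map $C[-1]\to A$.
\end{proof}
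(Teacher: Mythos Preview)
Your proposal is correct and matches the paper's own proof, which is literally the one-liner ``Clear from the proof of \ref{2of3dm}.'' Your explicit rotation to $C[-1]\to A\to B\to C$ is exactly the missing detail: with $P$ replacing $C[-1]$ and $Q$ replacing $A$, the short exact sequence $0\to Q\to\cone(g)\to P[1]\to 0$ becomes $0\to A\to B\to C\to 0$, so the relevant hypothesis $S_\lambda(Q\oplus P[1])=0$ is \emph{literally} $S_\lambda(A\oplus C)=0$. In particular, the transpose identification $S_\mu(Z[1])\cong S_{\mu'}(Z)[|\mu|]$ you mention is unnecessary here---the shift on $C[-1]$ is cancelled by the $[1]$ built into the cone sequence, so no partition transposition enters. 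Otherwise your write-up is spot on.
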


\begin{proof}
Clear from the proof of \ref{2of3dm}.
\end{proof}

With these results available, we can prove that the motive
of every curve is Kimura-finite.

From now on we will write $\Lef$ for $\mathbb{Q}(1)[2]$
to lighten the notations.

\begin{lem}\label{splitl}
Let $P$ be a smooth rational point on a projective curve $X$.
Then the following is a split triangle:
\[ M(X-P)\rTo M(X)\rTo \Lef\rTo^0 M(X-P)[1].\]
\end{lem}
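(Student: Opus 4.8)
The plan is to produce the triangle from the standard Gysin/localization triangle in $\DM$ and then exhibit the splitting by hand. First I would recall that for the open immersion $U = X - P \hookrightarrow X$ with closed complement $\{P\}$, Voevodsky's localization (Gysin) triangle reads $M(X-P) \to M(X) \to M(P)(c)[2c] \to M(X-P)[1]$, where $c$ is the codimension of $P$ in $X$; since $X$ is a curve and $P$ is a smooth point, $c = 1$, and since $P$ is a rational point $M(P) = \mathbb{Q}(0)[0] = \I$. Thus $M(P)(1)[2] = \Q(1)[2] = \Lef$ with our notation, and the localization triangle is exactly $M(X-P) \to M(X) \to \Lef \to M(X-P)[1]$. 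So the content of the lemma is that the connecting map $\Lef \to M(X-P)[1]$ is zero, equivalently that $M(X) \to \Lef$ admits a section, equivalently that $M(X-P) \to M(X)$ admits a retraction.

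The key step is to construct the section $\Lef \to M(X)$. The natural candidate comes from the class of the point itself: the structure map $X \to \Spec k$ together with the point $P : \Spec k \to X$ gives, after the usual duality/twisting, a map. More concretely, I would argue as follows. The rational point $P$ splits $M(P) = \I$ off of $M(X)$ in degree $0$: the composite $\I = M(P) \to M(X) \to M(P) = \I$ (point, then structure map) is the identity. Dually — using that $M(X)$ for a smooth projective curve is self-dual up to the twist $\Lef$, or more elementarily using the transpose of the correspondence $[P] \subseteq \Spec k \times X$ viewed in $X \times \Spec k$ — the class of $P$ also yields a map $\Lef \to M(X)$ whose composite with $M(X) \to \Lef$ is the identity of $\Lef$. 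Indeed the map $M(X) \to \Lef$ in the Gysin triangle is, up to the identification, the one induced by the cycle class of $P$, and the transpose cycle gives the section because $\deg(P) = 1$ (here is where the hypothesis that $P$ is \emph{rational} is used).

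The main obstacle is making the self-duality argument precise at the level of $\DM$ rather than hand-waving "by duality." The cleanest route I would take: $M(X)$ for $X$ smooth projective of dimension $1$ is a dualizable object with dual $M(X)^\vee \cong M(X)(-1)[-2]$ (this is standard in $\DM$ for smooth projective varieties, from Poincaré duality). Then $\Hom_{\DM}(\Lef, M(X)) = \Hom(\Q(1)[2], M(X)) \cong \Hom(\I, M(X)(-1)[-2]) = \Hom(\I, M(X)^\vee) = \Hom(M(X), \I)$, and inside the last group sits the map induced by the structure morphism $X \to \Spec k$ (the "fundamental class" functional), which is dual to the point inclusion. Tracing through the duality isomorphisms, the section $s : \Lef \to M(X)$ corresponding to this functional satisfies $(\text{Gysin map}) \circ s = \mathrm{id}_\Lef$ precisely because the intersection number of $P$ with $X$ (i.e. $\deg P$) is $1$. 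Once the section is in hand, a standard fact about triangulated categories — a triangle with a splitting of one of its maps is a direct-sum triangle with zero connecting map — finishes the proof, and in particular shows the triangle is split with the connecting map equal to $0$ as asserted.
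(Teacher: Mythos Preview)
Your argument is correct and follows the same route as the paper, which simply cites the Gysin/localization triangles on p.~196 of \cite{tri} without further comment. You have merely made explicit what the paper leaves to the reader: the identification of the third term as $\Lef$ and the construction of the section $\Lef\to M(X)$ via Poincar\'e duality and the degree-one cycle $[P]$, which is indeed the standard way to see the splitting.
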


\begin{proof}
This is obtained from the triangles on p. 196 in \cite{tri}.
\end{proof}

\begin{prop}\label{smoothcurvekf}
The motive of a smooth curve is Kimura-finite.
\end{prop}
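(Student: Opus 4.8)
The plan is to reduce the statement for an arbitrary smooth curve to the case of a smooth \emph{projective} curve, which is already known to be Kimura-finite by \ref{spcurvekf}. Given a smooth curve $U$, I would first choose a smooth compactification: there is a smooth projective curve $X$ together with an open immersion $U\hookrightarrow X$ whose complement is a finite set of closed points. If those points are not all rational, \ref{basechange} lets me pass to a finite extension of $k$ over which they become rational, so without loss of generality I may assume $X\setminus U=\{P_1,\dots,P_r\}$ with each $P_i$ a smooth rational point. It then suffices to treat the case of a single point removed and induct on $r$, since at each stage $X\setminus\{P_1,\dots,P_{s}\}$ is again a smooth curve sitting inside the smooth projective curve $X$.

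So the heart of the argument is: if $X$ is smooth projective and Kimura-finite, and $P\in X$ is a smooth rational point, then $M(X-P)$ is Kimura-finite. For this I would invoke \ref{splitl}, which gives the \emph{split} triangle
\[ M(X-P)\rTo M(X)\rTo \Lef\rTo^0 M(X-P)[1]. \]
Because the connecting map is zero, the triangle splits, so $M(X)\isom M(X-P)\oplus \Lef$ in $\DM$. Now $M(X)$ is Kimura-finite by \ref{spcurvekf} (viewing the Chow motive inside $\DM$ via the faithful tensor functor), and $\Lef$ is Kimura-finite — indeed even, since $\Lambda^2\Lef=0$ as noted in \ref{motpn}.

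The remaining point is to extract Kimura-finiteness of a direct summand from Kimura-finiteness of the whole. This is where one must be a little careful, since Kimura-finiteness is not a priori inherited by summands in an arbitrary \qtc. However, $M(X)=M(X-P)\oplus\Lef$, and I claim this forces $M(X-P)$ to be Kimura-finite. Write $M(X)=M_+\oplus M_-$ with $M_+$ even and $M_-$ odd. Since $\Lef$ is a summand of $M(X)$ and $\Lef$ is even (hence its odd part is $0$), and since in the relevant motivic categories the decomposition into even and odd parts is compatible with direct-sum decompositions coming from idempotents — one uses that an even object has no nonzero odd summand and vice versa, together with the fact that $\Lef$ has a canonical even/odd splitting — the complementary summand $M(X-P)$ acquires an induced splitting $M(X-P)=(M(X-P))_+\oplus (M(X-P))_-$ with $(M(X-P))_+$ even and $(M(X-P))_-$ odd. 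Concretely, one can argue as follows: the idempotent cutting out $\Lef$ commutes with the idempotents giving $M_+$ and $M_-$ up to the obstruction of extensions, and \ref{dmevenodd} (two-out-of-three for even/odd objects in triangles) handles exactly such obstructions. I expect this last bookkeeping step — checking that the even/odd decomposition of $M(X)$ restricts correctly to the complementary summand, rather than merely to the sub-quotient — to be the main technical obstacle, and the cleanest route is likely to phrase it via \ref{dmevenodd} applied to the split triangle above, first splitting off the even part $\Lef$ and then concluding that $M(X-P)$ inherits a Kimura decomposition because $M(X)$ does and $\Lef$ is even.
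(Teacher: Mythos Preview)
Your reduction to the projective case and the use of base change are the same opening moves as the paper's. The real trouble starts at the induction step ``remove one point at a time''.

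First, Lemma~\ref{splitl} is stated only for a \emph{projective} curve, and this hypothesis is essential. Once you have already removed a point, the curve is affine and the Gysin triangle need not split. For instance, take $Y=\A^1$ and remove the origin: the triangle $M(\G_m)\to M(\A^1)\to\Lef\to M(\G_m)[1]$ has connecting map $\Q(1)[2]\to\Q(1)[2]$ equal to the identity, so $M(\A^1)\not\isom M(\G_m)\oplus\Lef$. Thus after the very first removal your inductive mechanism --- splitting off a copy of $\Lef$ each time --- breaks down, and you are left with genuinely non-split triangles that Kimura-finiteness alone does not control.

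Second, even at the first step your argument for passing from $M(X)\isom M(X-P)\oplus\Lef$ to Kimura-finiteness of $M(X-P)$ is not complete. Corollary~\ref{dmevenodd} needs both outer terms of the triangle to have the \emph{same} parity; here $\Lef[-1]$ is odd while $M(X)$ is of mixed parity, so it does not apply. The claim that the even/odd idempotents of $M(X)$ ``commute'' with the idempotent cutting out $\Lef$ is exactly what is at stake and is not justified.

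The paper sidesteps both problems by proving something sharper than Kimura-finiteness: it shows that the \emph{reduced} motive $\widetilde M(X)$ is \emph{odd}. One first removes a single point $P_0$ from $\bar X$; the split triangle of \ref{splitl} then identifies $\widetilde M(\bar X-P_0)$ with the middle Chow--K\"unneth piece $M_1(\bar X)$, which is odd by \ref{spcurvekf}. For the remaining points one uses the (generally non-split) Gysin triangle
\[
\textstyle\bigoplus_1^{n}\Lef[-1]\longrightarrow \widetilde M(X)\longrightarrow \widetilde M(\bar X-P_0)\longrightarrow \bigoplus_1^{n}\Lef,
\]
whose two outer terms are now both odd, so \ref{dmevenodd} gives that $\widetilde M(X)$ is odd. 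The key idea you are missing is to aim for a fixed parity (odd) rather than Kimura-finiteness, precisely because oddness is what \ref{dmevenodd} propagates through non-split triangles.
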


\begin{proof}
Let $X$ be a smooth curve. There exists a smooth projective curve $\bar X$
and an open embedding $X\rInto \bar X$, such that the complement is a collection
of smooth points $P_0,\ldots,P_n$. By base change \ref{basechange} we may assume that
all the points $P_i$ are rational and that $X$ contains a smooth rational point $Q$.

First consider $X'=\bar X-P_0$.
By \ref{splitl}, we have a triangle
\[ \Lef[-1]\rTo^0 M(X')\rTo M(\bar X)\rTo \Lef\]
from which we can split off the motive of the rational point $Q$ and,
writing $\widetilde M(X)$ for the reduced motive $M(X)/M(Q)$ of $X$, get
\[ \Lef[-1]\rTo^0 \widetilde M(X')\rTo \widetilde M(\bar X) \rTo \Lef.\]
Since $\bar X$ is a smooth projective curve, the reduced motive
decomposes as $\widetilde M(\bar X)=M_1(X)\oplus \Lef$, where $M_1(\bar
X)$ is odd.  By \ref{splitl}, we may split off the copy of $\Lef$, and
get that $M_1(\bar X)\isom \widetilde M(X')$. Since $M_1(\bar X)$ is
odd, so is $\widetilde M(X')$.

By \cite[p. 196]{tri}, we have a triangle
\[ \oplus_1^{n}\Lef[-1]\rTo M(X)\rTo M(X')\rTo \oplus_1^n \Lef\]
Splitting off the motive of the point $Q$, we get 
\[ \oplus_1^n\Lef[-1]\rTo \widetilde M(X)\rTo \widetilde M(X') \rTo \oplus_1^n\Lef.\]
But $\widetilde M(X')$ is odd by the first part of this proof 
and clearly $\oplus_1^n \Lef[-1]$ is odd, so by \ref{dmevenodd} $\widetilde M(X)$
is odd too. But $M(X)=\I\oplus \widetilde M(X)$, and therefore we have the statement.
\end{proof}

\begin{thm}\label{curvekf}
The motive of any curve is Kimura-finite.
\end{thm}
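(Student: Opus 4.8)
The goal is to remove the smoothness hypothesis from Proposition~\ref{smoothcurvekf}, so let $X$ be an arbitrary curve over $k$. The plan is to reduce to the smooth case by resolution of singularities (available for curves in all characteristics via normalization) together with the two out of three property \ref{2of3dm}. First I would reduce to $X$ reduced: the motive of a curve depends only on its reduced structure, so we may assume $X$ is reduced. Since $M(X_{red}) = M(X)$ in $\DM$ (nilpotent thickenings induce isomorphisms on motives), nothing is lost.

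Next, let $\nu: \widetilde{X} \to X$ be the normalization, which is a smooth curve, and let $Z \subset X$ be the finite set of singular points, with $\widetilde{Z} = \nu^{-1}(Z)$ its (finite, hence zero-dimensional) preimage. The key input is the abstract blow-up (cdh-descent) triangle in $\DM$:
\[ M(\widetilde{Z}) \to M(\widetilde{X}) \oplus M(Z) \to M(X) \to M(\widetilde{Z})[1]. \]
This holds because $\nu$ is proper, an isomorphism away from $Z$, and $\widetilde{X} \times_X Z = \widetilde{Z}$; such triangles are available in $\DM^{eff,-}_{Nis}(k,\Q)$ (e.g.\ from the Mayer--Vietoris/blow-up machinery of \cite{notes} and \cite{tri}). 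Now $M(\widetilde{X})$ is Kimura-finite by Proposition~\ref{smoothcurvekf}, and $M(Z)$ and $M(\widetilde{Z})$ are motives of zero-dimensional schemes, hence finite sums of motives of (spectra of) finite field extensions of $k$; each such is Kimura-finite (indeed even, being a summand of a sum of copies of $\I$ after base change, and using \ref{basechange}). Therefore $M(\widetilde{Z})$ is Kimura-finite, hence Schur-finite, and $M(\widetilde{X}) \oplus M(Z)$ is Schur-finite by \ref{kfsf}. Applying the two out of three property \ref{2of3dm} to the triangle above gives that $M(X)$ is Schur-finite.

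To upgrade Schur-finiteness to Kimura-finiteness, I would instead use the even/odd bookkeeping of \ref{dmevenodd}. Write $M(\widetilde{Z})$ and $M(Z)$ as direct sums of motives of points; these are all even. Decompose $M(\widetilde{X}) = \I \oplus M_1 \oplus \Lef^{\oplus ?}$ using the proof of \ref{smoothcurvekf}, where the summands other than the odd part $M_1$ are even. Then in the triangle, both the first and third terms, after regrouping, are sums of an even piece and an odd piece; splitting the triangle into its even and odd contributions (the point motives contribute only to the even side) and applying \ref{dmevenodd} to each piece shows $M(X)$ splits as (even) $\oplus$ (odd), i.e.\ is Kimura-finite. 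The main obstacle is making the abstract blow-up triangle precise in $\DM^{eff,-}_{Nis}(k,\Q)$ and ensuring the identification $\widetilde{X}\times_X Z = \widetilde{Z}$ as schemes (this is where reducedness and the choice of scheme structures matter); once that triangle is in hand, everything else is a direct application of \ref{2of3dm}, \ref{dmevenodd}, \ref{basechange}, and \ref{smoothcurvekf}.
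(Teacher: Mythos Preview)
Your overall strategy---the abstract blow-up triangle for the normalization, together with \ref{dmevenodd}---is exactly the one the paper uses, and your argument for Schur-finiteness of $M(X)$ is fine. The gap is in the step where you ``split the triangle into its even and odd contributions'' to upgrade to Kimura-finiteness. This does not work as stated: in $\DM$ there is no reason for maps from even objects to odd objects to vanish, so the triangle
\[
M(\widetilde Z)\;\longrightarrow\; M(\widetilde X)\oplus M(Z)\;\longrightarrow\; M(X)\;\longrightarrow\; M(\widetilde Z)[1]
\]
need not decompose as a direct sum of an ``even'' triangle and an ``odd'' triangle. Concretely, if $\widetilde X$ is a smooth projective curve with base point $e$ and Chow--K\"unneth decomposition $M(\widetilde X)=\I\oplus h_1(\widetilde X)\oplus\Lef$, then for a point $p\in\widetilde Z$ the component of $M(p)\to M(\widetilde X)$ landing in $h_1(\widetilde X)$ is the class of $[p]-[e]$ in $CH_0(\widetilde X)_{\deg 0}\otimes\Q$, which is typically nonzero. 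So the even first term genuinely maps into the odd summand of the middle term, and you cannot peel off an odd sub-triangle to which \ref{dmevenodd} would apply.

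The paper avoids this obstruction not by splitting the triangle but by \emph{reducing} it until the two outer terms are already odd. First it treats the affine case: after base change so that $Z$ and $Z'$ consist of rational points, the surjection $M(Z')\to M(Z)$ splits, giving $M(Z')\isom K\oplus M(Z)$ with $K$ even; cancelling $M(Z)$ leaves a triangle $K\to M(X')\to M(X)\to K[1]$. One then splits off the motive of a smooth rational point (the composite $K\to M(X')\to\I$ vanishes because $K$ consists of degree-zero combinations of points), obtaining $K\to\widetilde M(X')\to\widetilde M(X)\to K[1]$. Now $\widetilde M(X')$ is odd by the proof of \ref{smoothcurvekf} and $K[1]$ is odd because $K$ is even, so a single application of \ref{dmevenodd} (to the rotated triangle with $\widetilde M(X)$ in the middle) gives $\widetilde M(X)$ odd, hence $M(X)=\I\oplus\widetilde M(X)$ Kimura-finite. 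The projective singular case is then reduced to the affine one by removing a rational point via \ref{splitl}. If you rework your argument along these lines---cancel $M(Z)$, split off $\I$, then apply \ref{dmevenodd} once rather than trying to split the triangle---it will go through.
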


\begin{proof}
The smooth case was established in \ref{smoothcurvekf}. Suppose
that $X$ is a singular affine curve. Let $Z$ be the singular locus
of $X$, and let $X'$ be the normalization. Then we have the cartesian diagram
\begin{diagram}
Z'&\rTo&X'\\
\dTo&&\dTo\\
Z&\rTo& X.
\end{diagram}
By \cite[Prop. 4.1.3]{tri}, we have a triangle
\[ M(Z')\to M(Z)\oplus M(X')\to M(X)\to M(Z')[1].\]
By base change \ref{basechange}, we may assume that both $Z$ and $Z'$ consist of rational 
points. 
Let $K$ be the kernel of the map $M(Z')\to M(Z)$ and note that
$M(Z')\isom K\oplus M(Z)$. Then the triangle becomes
\[ K\to M(X')\to M(X)\to K[1].\]
By base change \ref{basechange}, we may assume that $X$ contains a smooth rational point
which we can split off, and get a triangle
\[ K\to \widetilde M(X')\to \widetilde M(X)\to K[1].\]
By the proof of \ref{smoothcurvekf}, $\widetilde M(X')$ is odd. But
$K[1]$ is also odd, because $M(Z')$ is even, and therefore 
$\widetilde M(X)$ is odd by \ref{dmevenodd}.

Now let $X$ be a projective singular curve. By base change \ref{basechange} we may
assume that $X$ has a rational point $P$. By \ref{splitl}, $M(X)=
M(X-P)\oplus \Lef$. But $X-P$ is an affine curve, and we have seen above
that it is Kimura-finite. Therefore $M(X)$ is Kimura-finite.
\end{proof}

\begin{rem}
V. Guletski\u{\i} has independently obtained this result (and also
\ref{dmevenodd}) in his recent preprint \cite{gul}.
\end{rem}

\medskip

Let $\DM_{gm}=\DM^{eff}_{gm}(k,\Q)$ be the category of effective geometrical
motives (see \cite[2.1.1]{tri}). Recall that there is a fully
faithful {\qtf} from $\DM_{gm}$ to $\DM$ and that
$\DM_{gm}$ contains the motives of all smooth schemes. 
Let $d_{\leq i}=d_{\leq i}\DM_{gm}^{eff}(k,\Q)$
be the thick subcategory of $\DM_{gm}$ generated by the
motives of all smooth schemes $X$ of dimension less or equal to $i$
(cf. \cite[p. 215]{tri}).

By \ref{2of3dm} and \ref{sfgen}, the category $d_{\leq i}$ is
Schur-finite if and only if every smooth motive $M(X)$ is Schur-finite ($\dim
X\leq i$). This observation, together with \ref{smoothcurvekf},
implies the following statement.

\begin{cor}\label{dleq1}
The category $d_{\leq 1}$ is Schur-finite.
\end{cor}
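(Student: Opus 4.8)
The plan is to reduce the statement to what has already been proved. The key observation is already given just before the corollary: by Corollary~\ref{sfthick} (or directly by Proposition~\ref{2of3dm} together with Proposition~\ref{sfgen}, which controls Schur-finiteness under $\oplus$ and $\tens$), the Schur-finite objects of $\DM$ form a thick subcategory closed under twists; intersecting with the full subcategory $\DM_{gm}$ and with $d_{\leq 1}$, the Schur-finite objects of $d_{\leq 1}$ form a thick subcategory of $d_{\leq 1}$. Since $d_{\leq 1}$ is by definition the thick subcategory of $\DM_{gm}$ generated by the motives $M(X)$ of smooth schemes $X$ of dimension $\leq 1$, it therefore suffices to prove that each such $M(X)$ is Schur-finite.

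Next I would dispose of the generators. A smooth scheme $X$ of dimension $\leq 1$ is a finite disjoint union of a smooth curve and finitely many points (the dimension-$0$ and dimension-$1$ components), so $M(X)$ is a finite direct sum of motives of smooth curves and of $\Spec$ of finite separable extensions of $k$. The motive of a point is $\I$ (or a direct summand of $\I_L$ after base change, which is Schur-finite by Lemma~\ref{basechange} and Lemma~\ref{fssf}), hence trivially Schur-finite. The motive of a smooth curve is Kimura-finite by Proposition~\ref{smoothcurvekf}, hence Schur-finite by Corollary~\ref{kfsf}. Closure of Schur-finiteness under finite direct sums (Corollary~\ref{kfsf}) then gives that $M(X)$ is Schur-finite for every smooth $X$ with $\dim X \leq 1$.

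Combining the two paragraphs finishes the argument: the Schur-finite objects of $d_{\leq 1}$ form a thick subcategory containing a set of generators, hence equal all of $d_{\leq 1}$. I do not anticipate a genuine obstacle here; the only point to be careful about is making the reduction "thick subcategory generated by generators, all of which are Schur-finite, forces everything Schur-finite" rigorous — this is exactly the content of the remark preceding the corollary, which leans on \ref{2of3dm} (two-out-of-three for distinguished triangles), \ref{sfgen} (direct summands and tensor products), and the fact that a thick subcategory is closed under retracts. Everything else is bookkeeping: translating "$\dim X \leq 1$, $X$ smooth" into "finitely many points plus a smooth curve," and invoking \ref{smoothcurvekf} and \ref{kfsf}.
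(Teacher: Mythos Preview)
Your proposal is correct and follows essentially the same route as the paper: the paper observes just before the corollary that, by \ref{2of3dm} and \ref{sfgen}, $d_{\leq i}$ is Schur-finite iff $M(X)$ is Schur-finite for every smooth $X$ of dimension $\leq i$, and then invokes \ref{smoothcurvekf}. Your argument is a slightly more explicit unpacking of the same reduction, with the extra (harmless) bookkeeping of separating points from curves.
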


\begin{rem}
F. Orgogozo proved in \cite{org} that $d_{\leq 1}$ 
is equivalent to $\bfD^b(1\text{-mot}_\Q)$, the
bounded derived category of $1$-motives modulo isogenies. P. O'Sullivan proved that
all objects of $\bfD^b(1\text{-mot}_\Q)$ are Kimura-finite
(in $\DM_{gm}$) using the weight filtration on $1$-motives. This implies
that $d_{\leq 1}$ is actually Kimura-finite.
\end{rem}

Recall from \cite[4.3.7]{tri} that every object $A$ in $\DM_{gm}$ has a
dual $A^* =\underline{Hom}_{\DM}(A,\Z)$, where $\underline{Hom}_{\DM}$ is the internal Hom-object
of $\DM$.
By \cite{tri}, every variety has also a motive with compact support
$M^c(X)$ associated to it. If $X$ is proper, then $M(X)\isom M^c(X)$.
If $X$ is smooth of dimension $d$, then $M^c(X)\isom M(X)^*(d)[2d]$.

\begin{lem}\label{schurdual}
The subcategory of $\DM_{gm}$ consisting of Schur-finite objects
is thick and closed under duals and twists.
\end{lem}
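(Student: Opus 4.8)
The statement asks that the Schur-finite objects of $\DM_{gm}$ form a thick subcategory closed under duals and twists. The plan is to assemble this from results already established for $\DM$ together with the standard properties of duality in $\DM_{gm}$. First, thickness: this is essentially \ref{sfthick}, which says the Schur-finite objects of $\DM$ form a thick subcategory closed under twists. Since $\DM_{gm}\to\DM$ is a fully faithful \qtf, Schur-finiteness of an object of $\DM_{gm}$ can be tested in $\DM$ (by \ref{fssf}), and a cone taken in $\DM_{gm}$ agrees with the cone taken in $\DM$; a direct summand in $\DM_{gm}$ is a direct summand in $\DM$. So the intersection of the thick subcategory of \ref{sfthick} with $\DM_{gm}$ is thick in $\DM_{gm}$, and closure under twists is inherited the same way (the Tate twist on $\DM_{gm}$ is the restriction of the one on $\DM$). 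That disposes of everything except closure under duals.

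For closure under duals, the key point is that for any object $A$ of a rigid (or at least suitably dualizable) tensor category, the functor $(-)^*$ sends $A^{\tens n}$ to $(A^*)^{\tens n}$, but with the $\Sim_n$-action twisted by the sign representation — equivalently, $S_\lambda(A)^* \isom S_{\lambda'}(A^*)$, where $\lambda'$ is the transpose partition. This is the same mechanism already used in the proof of \ref{apm} (where passing to $(0,V)$ transposes the partition). So I would argue: the dual is an additive contravariant \qtf-like operation that commutes with $\tens$ up to the natural isomorphism $(A\tens B)^*\isom A^*\tens B^*$, and it is compatible with the $\Sim_n$-symmetry up to the sign character on $\Sim_n$. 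Hence applying $c_\lambda$ to $A^{\tens n}$ and then dualizing gives $c_{\lambda'}$ applied to $(A^*)^{\tens n}$. Concretely: if $S_\lambda(A)=0$ then, dualizing, $S_{\lambda'}(A^*)=0$, so $A^*$ is Schur-finite. (One must note $A$ and $A^*$ are swapped by taking duals again, and that $\DM_{gm}$ is closed under duals by \cite[4.3.7]{tri}, so $A^*$ genuinely lies in $\DM_{gm}$.)

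The main obstacle — really the only nontrivial point — is justifying the identity $S_\lambda(A)^*\isom S_{\lambda'}(A^*)$ in the tensor-categorical generality at hand, i.e. checking that dualization intertwines the $\Sim_n$-action on $A^{\tens n}$ with the $\Sim_n$-action on $(A^*)^{\tens n}$ up to the sign. The symmetry isomorphism $c_{A,B}:A\tens B\to B\tens A$ dualizes to the symmetry on duals, and there is no sign; the sign actually does \emph{not} appear here — rather, the subtlety is that $(-)^*$ is contravariant so it reverses the order of a tensor word, and reconciling ``reverse the word'' with ``the symmetric group action'' is exactly where one checks that $(c_\lambda)^* = c_\lambda$ as an element of $\Q[\Sim_n]$ (the Young symmetrizer of $\lambda$ is fixed, up to conjugation by the longest element, which amounts to no change on isomorphism classes of summands). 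So in fact $S_\lambda(A)^*\isom S_\lambda(A^*)$ with no transpose at all; either way, a partition is sent to a partition of the same size, so the class of Schur-finite objects is preserved. I would phrase the proof to avoid committing to the precise partition and simply say: dualization sends $S_\lambda(A)$ to a direct summand of $(A^*)^{\tens n}$ cut out by an idempotent of $\Q[\Sim_n]$, hence to $\oplus S_\mu(A^*)$ for various $\mu$ with $|\mu|=n$; so if $A$ is Schur-finite then $A^*$ is annihilated by $S_\nu$ for suitable $\nu$ by \ref{sfgen}(2). Closure under twists and thickness then follow as above, completing the proof.

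\begin{proof}
By \cite[4.3.7]{tri}, $\DM_{gm}$ is closed under the duality $A\mapsto A^*$ and under Tate twists, and the embedding $\DM_{gm}\to\DM$ is a fully faithful \qtf. Since this embedding is faithful, an object of $\DM_{gm}$ is Schur-finite if and only if it is Schur-finite when viewed in $\DM$, by \ref{fssf}; moreover cones and direct summands in $\DM_{gm}$ agree with those computed in $\DM$. Therefore the class of Schur-finite objects of $\DM_{gm}$ is the intersection with $\DM_{gm}$ of the subcategory of \ref{sfthick}, which is thick and closed under twists; hence so is its intersection with $\DM_{gm}$.

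It remains to check closure under duals. The duality functor is additive and contravariant, and satisfies $(A\tens B)^*\isom A^*\tens B^*$ compatibly with the symmetry constraints. Consequently it carries the $\Sim_n$-object $A^{\tens n}$ to the $\Sim_n$-object $(A^*)^{\tens n}$, and the image of the summand $S_\lambda(A)=c_\lambda(A^{\tens n})$ is the summand of $(A^*)^{\tens n}$ cut out by the idempotent $(c_\lambda)^*\in\Q[\Sim_n]$. Since $(c_\lambda)^*$ is again an idempotent of $\Q[\Sim_n]$, this summand decomposes as $\oplus_\mu S_\mu(A^*)$ over certain partitions $\mu$ of $n$. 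Thus if $S_\lambda(A)=0$, then $\oplus_\mu S_\mu(A^*)=0$, so $S_\mu(A^*)=0$ for each such $\mu$, and in particular $A^*$ is Schur-finite. This proves the subcategory of Schur-finite objects of $\DM_{gm}$ is closed under duals.
\end{proof}
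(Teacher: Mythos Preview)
Your proof is correct and follows essentially the same approach as the paper: thickness and closure under twists via \ref{sfthick}, and closure under duals via the compatibility of $(-)^*$ with Schur functors. The paper is terser on the duals step, simply citing \cite[1.18]{delschur} for the precise identity $(S_\lambda M)^*\isom S_\lambda(M^*)$, whereas you argue more softly that $(S_\lambda A)^*$ is the summand of $(A^*)^{\tens n}$ cut out by an idempotent of $\Q[\Sim_n]$ and hence a sum of $S_\mu(A^*)$'s; either route suffices. One remark: the discussion in your plan about a possible transpose $\lambda\mapsto\lambda'$ is a red herring (the irreducible $\Sim_n$-representations are self-dual over $\Q$, so no transpose appears), but your final proof wisely sidesteps the issue by not committing to a specific $\mu$.
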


\begin{proof}
The subcategory is thick and closed under twists 
by \ref{sfthick}. If $M$ is an object of $\DM_{gm}$ 
then $(S_\lambda(M))^*\isom S_\lambda(M^*)$ by \cite[1.18]{delschur}.
In particular $M$ is Schur-finite if and only if $M^*$ is so.
\end{proof}

Before we state our results, let us investigate further 
the structure of the categories $d_{\leq i}$. 
We will write $D_{\leq n}$ for the thick subcategory
of $\DM_{gm}$ generated by the motives of all smooth projective
varieties of dimension at most $n$. If the ground field
admits resolution of singularities, then
we have the following facts.

\begin{lem}\label{lemma1}
Assume that the ground field $k$ admits resolution
of singularities in dimension $\leq n$. If $X$ is a projective
variety of dimension less or equal to $n$, then 
$M(X)^*(n)[2n]=M(X)^*\tens \Lef^n$ is 
in $D_{\leq n}$.
\end{lem}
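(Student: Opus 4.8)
The plan is to run an induction on the dimension of $X$, using resolution of singularities to reduce to the smooth case and then the blow-up/localization triangles to descend the dimension. First I would note that for $X$ smooth projective of dimension $d \leq n$, we have $M^c(X) \isom M(X)^*(d)[2d]$, and since $X$ is proper $M^c(X) \isom M(X)$, so $M(X)^*(d)[2d] \isom M(X) \in D_{\leq n}$; twisting by $\Lef^{n-d}$ and using that $D_{\leq n}$ is closed under twists (it is a thick subcategory of $\DM_{gm}$, hence closed under the invertible twists $\Lef^{\pm 1}$ — this needs the observation that $M(X) \tens \Lef$ and $M(X) \tens \Lef^{-1}$ stay in $D_{\leq n}$ for $X$ smooth projective of dimension $\leq n$, which follows from the projective bundle / blow-up formulas as in \ref{motpn} and \cite{manin}) gives $M(X)^*(n)[2n] \in D_{\leq n}$.

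For the singular case, I would argue by induction on $d = \dim X$, the case $d=0$ being trivial. Given a projective $X$ of dimension $d \leq n$, choose by resolution of singularities a smooth projective $X'$ with a proper birational morphism $X' \to X$, and a closed $Z \subseteq X$ with $\dim Z < d$ such that the preimage $Z'$ is a divisor in $X'$ and $X' \setminus Z' \isom X \setminus Z$. The abstract blow-up triangle of \cite[Prop. 4.1.3]{tri} gives $M(Z') \to M(Z) \oplus M(X') \to M(X) \to M(Z')[1]$. Dualizing (duals exist in $\DM_{gm}$ by \cite[4.3.7]{tri}, and $\underline{Hom}_{\DM}(-,\Z)$ sends triangles to triangles) and twisting by $\Lef^n$ yields a triangle relating $M(X)^* \tens \Lef^n$ to $M(X')^* \tens \Lef^n$, $M(Z)^* \tens \Lef^n$, and $M(Z')^* \tens \Lef^n$. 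The middle term $M(X')^* \tens \Lef^n$ lies in $D_{\leq n}$ by the smooth case already treated. The terms $M(Z)^* \tens \Lef^n$ and $M(Z')^* \tens \Lef^n$ lie in $D_{\leq n}$ by the inductive hypothesis, since $\dim Z, \dim Z' < d \leq n$. Since $D_{\leq n}$ is a thick subcategory, it is closed under the third object of any triangle two of whose vertices lie in it, so $M(X)^* \tens \Lef^n \in D_{\leq n}$ as well.

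The main obstacle I anticipate is handling the twists and the duals of the \emph{non-proper} pieces correctly: $Z'$ need not be smooth, so one cannot directly invoke $M^c(Z') \isom M(Z')^*(d')[2d']$, and one must instead feed $Z$ and $Z'$ (as projective varieties of strictly smaller dimension) back into the induction rather than trying to relate their duals to motives with compact support by hand. A secondary point requiring care is the closure of $D_{\leq n}$ under the twist $\Lef^{n}$ when applied to motives of smooth projective varieties of dimension $< n$: this again reduces, via the projective bundle formula, to the fact that $M(\Proj^m \times W)$ for $W$ smooth projective of dimension $\leq n-m$ is built out of twists $M(W), M(W) \tens \Lef, \dots, M(W) \tens \Lef^m$, and iterating places all the needed twists of lower-dimensional smooth projective motives inside $D_{\leq n}$. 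Once these bookkeeping points are settled, the induction closes immediately.
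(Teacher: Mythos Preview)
Your approach is essentially the same as the paper's: induction on $\dim X$, resolve singularities, apply the abstract blow-up triangle of \cite[Prop.~4.1.3]{tri}, dualize, twist by $\Lef^n$, and conclude by thickness of $D_{\leq n}$.

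One correction and one simplification. The claim that $D_{\leq n}$ is closed under $\Lef^{-1}$ is false as stated: here $\DM_{gm}=\DM^{eff}_{gm}(k,\Q)$ is the \emph{effective} category, so $\Lef^{-1}$ does not even exist, and $D_{\leq n}$ certainly cannot be closed under it. Fortunately you never use negative twists; only $\Lef^{n-d}$ with $n-d\geq 0$ is needed, and your ``secondary point'' paragraph already contains the correct justification: $M(W)\tens\Lef^{n-d}$ is a summand of $M(W\times\Proj^{n-d})$, which is smooth projective of dimension $\leq n$, hence in $D_{\leq n}$. The paper packages this more cleanly by first observing that $M\in D_{\leq d}$ implies $M\tens\Lef^{n-d}\in D_{\leq n}$, which lets one reduce outright to the case $d=n$; then for the smooth resolution $X'$ one has $M(X')^*\tens\Lef^n=M(X')$ on the nose, with no residual twist to chase. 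Once you drop the spurious $\Lef^{-1}$ remark, your argument and the paper's are the same.
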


\begin{proof}
We will proceed by induction on $d=\dim X$. If $M$ is in $D_{\leq d}$
then $M\tens \Lef^{n-d}$ is in $D_{\leq n}$, so we may assume $d=n$.
The case $n=0$ is clear.  Let us assume that the statement holds for all varieties of
dimension $n-1$ or less.  Let $Z$ be the singular locus of $X$. Using
resolution of singularities we have a smooth projective variety $X'$
and a triangle
\[ M(Z')\to M(X')\oplus M(Z)\to M(X)\to M(Z')[1].\]
Dualizing and tensoring with $\Lef^n$ we have the following triangle
\[ M(X)^*\tens \Lef^n \to (M(X')^*\tens \Lef^n) \oplus ( M(Z)^*\tens \Lef^n) 
\to M(Z')^*\tens \Lef^n\to M(X)^*\tens \Lef^n[1].\]
Both $Z$ and $Z'$ are of lower dimension, so $M(Z)^*\tens
\Lef^n$ and $M(Z')^*\tens \Lef^n$ are in $D_{\leq n}$ by induction. But
since $X$ is smooth and projective of dimension $n$,
$M(X)^*(n)[2n]=M(X)$, which is in $D_{\leq n}$ by definition.  By the
two out of three property, $M(X)^*\tens \Lef^n$ is in $D_{\leq n}$ as
well.
\end{proof}

\begin{prop}\label{dleqsingular}
If $k$ admits resolution of
singularities in dimension $\leq n$, then the category $\dleqn$
\begin{enumerate}
\item contains $M(X)$ and $M^c(X)$ 
for every variety $X$ with $\dim X\leq n$;
\item is equal to $D_{\leq n}$, i.e., it is generated by the motives
of smooth projective varieties of dimension $\leq n$.
\end{enumerate}
\end{prop}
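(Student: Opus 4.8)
The plan is to prove parts (1) and (2) at once, by induction on $d$, establishing the sharper statement that for every variety $W$ with $\dim W\le d$ one has $M(W)\in D_{\le d}$ and $M^c(W)\in D_{\le d}$. Specializing to $d=n$ gives part (1) immediately, since trivially $D_{\le n}\subseteq d_{\le n}$; and it also yields $d_{\le n}\subseteq D_{\le n}$, because the generators $M(X)$ of $d_{\le n}$ (with $X$ smooth and $\dim X\le n$) then lie in $D_{\le n}$, so part (2) follows as well. Two remarks will be used throughout. First, $\Lef^j$ is a direct summand of $M(\Proj^j)$, so $M(Y)\tens\Lef^j$ is a summand of $M(Y\times\Proj^j)$; a thick-subcategory argument then shows $D_{\le e}\tens\Lef^j\subseteq D_{\le e+j}$, and in particular $D_{\le e}\subseteq D_{\le d}$ for $e\le d$. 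Second --- and this is where Lemma \ref{lemma1} is fed in --- the subcategory $D_{\le d}$ is stable under the operation $M\mapsto M^*\tens\Lef^d=M^*(d)[2d]$: this is an additive contravariant triangulated endofunctor of $\DM_{gm}$, so $\{M:M^*\tens\Lef^d\in D_{\le d}\}$ is again thick, and it contains every generator $M(Y)$ of $D_{\le d}$ because Poincar\'e duality gives $M(Y)^*\isom M(Y)\tens\Lef^{-\dim Y}$ for $Y$ smooth projective, whence $M(Y)^*\tens\Lef^d\isom M(Y)\tens\Lef^{d-\dim Y}\in D_{\le d}$ by the first remark. (The one technical point here is that duals only live in $\DM_{gm}$ once $\Lef$ is inverted; one checks that the objects actually used stay effective, exactly as in Lemma \ref{lemma1}.)

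For the induction, the case $d=0$ is clear: a zero-dimensional variety is finite, hence proper, so $M=M^c$, and its motive is a finite sum of zero-dimensional Artin motives lying in $D_{\le 0}$. Assume the statement for $d-1$; I treat three cases. (i) \emph{$X$ projective, $\dim X\le d$.} If $\dim X<d$ this is the inductive hypothesis. If $\dim X=d$, Lemma \ref{lemma1} gives $M(X)^*\tens\Lef^d\in D_{\le d}$, and applying the stability from the previous paragraph to this object yields $(M(X)^*\tens\Lef^d)^*\tens\Lef^d=M(X)\in D_{\le d}$. (ii) \emph{$M^c(X)$ for arbitrary $X$ with $\dim X\le d$.} Embed $X$ as a dense open in a projective $\bar X$ of dimension $\le d$ (reducing to the quasi-projective case, or via Chow's lemma and resolution); then $Z=\bar X\setminus X$ has dimension $\le d-1$, and the localization triangle $M^c(Z)\to M^c(\bar X)\to M^c(X)\to M^c(Z)[1]$ for motives with compact support \cite{tri}, together with $M^c(\bar X)=M(\bar X)\in D_{\le d}$ by (i) and $M^c(Z)\in D_{\le d-1}$ by the inductive hypothesis, forces $M^c(X)\in D_{\le d}$. (iii) \emph{$M(X)$ for arbitrary $X$ with $\dim X\le d$.} If $X$ is smooth of dimension exactly $d$, Poincar\'e duality gives $M(X)=M^c(X)^*\tens\Lef^d$, so $M(X)\in D_{\le d}$ by (ii) and the stability of $D_{\le d}$; smooth $X$ of smaller dimension fall under the inductive hypothesis --- and since only dimension exactly $d$ is needed here, the twist is by $\Lef^0$, so no ``untwisting'' is required. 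For general, possibly singular, $X$ of dimension $d$, take a resolution $p\colon X'\to X$, let $Z\subseteq X$ be the locus where $p$ fails to be an isomorphism and $Z'=p^{-1}(Z)$; both have dimension $\le d-1$, and the blow-up triangle $M(Z')\to M(Z)\oplus M(X')\to M(X)\to M(Z')[1]$ of \cite[Prop. 4.1.3]{tri} reduces $M(X)$ to the smooth case $M(X')$ and to the inductive hypothesis. This closes the induction and proves the proposition.

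The substance of the argument --- and the reason Lemma \ref{lemma1} is invoked rather than a bare localization induction --- is case (i): one must know that the duality turning a motive with compact support back into an ordinary motive does not escape $D_{\le d}$. Packaging this as the stability of $D_{\le d}$ under $M\mapsto M^*\tens\Lef^d$ and verifying it on generators via Poincar\'e duality is the crux; the rest is a dimension induction assembled from the standard localization and blow-up triangles, with the single wrinkle --- sidestepped above by only ever treating the top dimension --- that $D_{\le d}$ need not be closed under untwisting by $\Lef$.
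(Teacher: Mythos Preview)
Your proof is correct, and in fact proves the slightly sharper statement that $M(W),M^c(W)\in D_{\le d}$ for every variety $W$ of dimension $\le d$, from which both parts follow at once. The ingredients are the same as the paper's --- Lemma~\ref{lemma1}, the blow-up triangle of \cite[4.1.3]{tri}, the localization triangle for $M^c$, and Poincar\'e duality --- but the organization is different. The paper first proves part (1) directly in $d_{\le n}$ (which is easy, because a resolution $X'$ of $X$ is smooth and hence $M(X')$ is a generator of $d_{\le n}$ by definition), and only then turns to part (2): for $X$ smooth one chooses a \emph{smooth} projective compactification $\bar X$ and uses the triangle $M(X)\to M(\bar X)\to M^c(Z)^*(n)[2n]$, invoking Lemma~\ref{lemma1} for the projective boundary $Z$. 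You instead work in $D_{\le d}$ from the start and isolate the single principle ``$D_{\le d}$ is stable under $M\mapsto M^*\otimes\Lef^d$,'' which you verify on generators and then apply twice --- once (together with Lemma~\ref{lemma1}) to get $M(X)\in D_{\le d}$ for projective $X$, and once to pass from $M^c(X)$ to $M(X)$ for smooth $X$. Your packaging is a bit more conceptual and makes the role of duality transparent; the paper's route is shorter for part (1) since it exploits the tautology that smooth motives generate $d_{\le n}$. Note, incidentally, that in your case (ii) you could take $\bar X$ smooth projective (via resolution) and bypass case (i) altogether, so that the only genuine use of Lemma~\ref{lemma1} is hidden in your stability claim --- which is morally the same content.
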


\begin{proof}
We will proceed by induction. The case $n=0$ is clear; let us assume that the statement
holds for $n-1$.  

Let us prove the first statement. 
Let $X$ be an $n$-dimensional variety and let $Z$ be a divisor containing
its singular locus.  Using resolution of singularities, we know that
there exist a smooth $X'$ and a proper map $p:X'\to X$ which is an
isomorphism outside $Z$. From
\cite[Prop. 4.1.3]{tri}, we have an exact triangle
\[ M(Z')\to M(Z)\oplus M(X')\to M(X)\to M(Z')[1].\]
Since both $Z$ and $Z'$ are of lower dimension,
then $M(Z)$ and $M(Z')$ are in $d_{\leq n}$ by induction. But
$X'$ is smooth and therefore $M(X')$ is in $d_{\leq n}$.
By thickness, we conclude that $M(X)$ is in $d_{\leq n}$ as well.

The proof for the motives with compact support is now elementary.
For every $X$, consider a projective closure $\bar X$ and 
the complement $Z=\bar X-X$. We have a triangle
\[ M^c(Z)\to M^c(\bar X) \to M^c(X)\to M^c(Z)[1].\]
Since $\bar X$ is projective, $M^c(\bar X)=M(\bar X)$
and $M^c(Z)= M(Z)$. But $M(\bar X)$ is in
$\dleqn$ by the first part of this proof, and
$ M(Z)$ is in $\dleqn$ by induction. By thickness,
$M^c(X)$ is in $\dleqn$ as well.

And now we prove the second statement.
Clearly, $D_{\leq n}\subseteq \dleqn$ and we need to
prove that $M(X)$ is in $D_{\leq n}$ for every smooth
$X$, $n=\dim X$. Using resolution of
singularities, we may embed $X$ into a smooth projective variety $\bar X$.
Let $Z$ be the complement $\bar X-X$, and consider the triangle
\[ M(X)\to M(\bar X)\to M^c(Z)^*(n)[2n]\to M(X)[1].\]
Since $Z$ is projective,  
$M^c(Z)^*(n)[2n]$ is in $D_{\leq n}$ 
by \ref{lemma1}. Since $M(\bar X)$ is in
$D_{\leq n}$, we conclude by thickness that $M(X)$ is in $D_{\leq n}$.
\end{proof}

The following corollaries are obtained from \ref{dleqsingular}
and \ref{2of3dm}.

\begin{cor}\label{cor1}
If the motive of every smooth projective surface is Schur-finite, 
then the motive of every surface is Schur-finite.
\end{cor}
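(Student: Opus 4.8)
The plan is to reduce the statement about arbitrary surfaces to the known case of smooth projective surfaces by repeatedly using the two out of three property \ref{2of3dm} along the triangles furnished by resolution of singularities, exactly as in the proof of \ref{dleqsingular}. Concretely, assume every smooth projective surface has Schur-finite motive; we want $M(X)$ Schur-finite for every surface $X$ (and, since $d_{\leq 2}$ is the thick subcategory generated by such $M(X)$, this gives that $d_{\leq 2}$ is Schur-finite by the observation preceding \ref{dleq1}).

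First I would dispose of the smooth projective case for \emph{all} dimensions $\le 2$: points and smooth projective curves are Kimura-finite hence Schur-finite by \ref{spcurvekf} and \ref{kfsf}, and smooth projective surfaces are Schur-finite by hypothesis. Next I would handle an arbitrary projective surface $X$: let $Z$ be a divisor containing its singular locus, take a resolution $p\colon X'\to X$ with $X'$ smooth projective and $Z'=p^{-1}(Z)$, and use the triangle
\[ M(Z')\to M(Z)\oplus M(X')\to M(X)\to M(Z')[1]\]
from \cite[Prop. 4.1.3]{tri}. Here $Z,Z'$ are (possibly singular) curves, so $M(Z)$ and $M(Z')$ are Schur-finite: a singular curve's motive is Schur-finite by \ref{curvekf} (it is even Kimura-finite), and finite disjoint unions are handled by \ref{kfsf}. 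Since $M(X')$ is Schur-finite by hypothesis, \ref{2of3dm} gives that $M(X)$ is Schur-finite. Finally, for an arbitrary (not necessarily projective) surface $X$, choose a projective closure $\bar X$ with complement $Z=\bar X-X$ a curve or lower, and run the localization triangle for motives with compact support
\[ M^c(Z)\to M^c(\bar X)\to M^c(X)\to M^c(Z)[1];\]
since $\bar X$ is projective, $M^c(\bar X)=M(\bar X)$ is Schur-finite by the previous step and $M^c(Z)$ is Schur-finite because $Z$ has dimension $\le 1$ (apply \ref{dleqsingular}(1) or directly the curve results), so $M^c(X)$ is Schur-finite by \ref{2of3dm}, and then $M(X)$ is Schur-finite by duality \ref{schurdual} together with $M^c(X)\isom M(X)^*(2)[4]$ and \ref{mtensl}.

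The only genuine input beyond bookkeeping is the existence of resolution of singularities for surfaces, which is unconditional in all characteristics, so no hypothesis on the ground field is needed here (unlike \ref{dleqsingular}, which is stated for fields admitting resolution in the relevant dimension). The main obstacle, such as it is, is simply making sure that at each stage the "boundary" pieces genuinely have dimension $\le 1$ so that the curve results apply — i.e. choosing $Z$ and $\bar X - X$ to be divisors/closed subschemes of dimension strictly less than $2$ — and then the two out of three property does all the work. Everything else is an immediate consequence of \ref{curvekf}, \ref{2of3dm}, \ref{schurdual}, and \ref{mtensl}.
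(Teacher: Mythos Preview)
Your approach is essentially that of the paper --- the corollary is stated there as an immediate consequence of \ref{dleqsingular} and \ref{2of3dm}, and you are unpacking the proof of \ref{dleqsingular} for $n=2$ --- but there is one genuine gap in your final step.

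You invoke the isomorphism $M^c(X)\isom M(X)^*(2)[4]$ for an \emph{arbitrary} surface $X$, whereas this duality (recalled in the paper just before \ref{schurdual}) holds only for \emph{smooth} $X$ of pure dimension $2$. Hence your argument, as written, does not cover the case of a singular non-projective surface. The fix is to reverse the order of your last two reductions: first treat smooth non-projective $X$ via a smooth projective compactification and the triangle $M(X)\to M(\bar X)\to M^c(Z)^*(2)[4]\to M(X)[1]$ (here the duality is legitimate on $X$, and $M^c(Z)^*(2)[4]$ is handled by \ref{lemma1} or directly by the curve results and \ref{schurdual}); only then treat an arbitrary singular $X$ by the resolution triangle $M(Z')\to M(Z)\oplus M(X')\to M(X)\to M(Z')[1]$, where $X'$ is now smooth (possibly non-projective) and hence already known to be Schur-finite. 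This is exactly how \ref{dleqsingular} is organised: part (2) deals with smooth $X$ via smooth compactification, and part (1) then reduces the singular case to the smooth one.
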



\begin{exam}
The proof shows that if $U$ is an open subset of a projective surface
$X$, and $M(X)$ is Schur-finite, then $M(U)$ is Schur-finite.
\end{exam}

\begin{cor}\label{induction2}
Assume that $k$ admits resolution of
singularities in dimension $\leq n$. 
If the motive of every
smooth projective variety of dimension less or equal to $n$ is Schur-finite,
then the motive of every variety of dimension less or equal to
$n$ is Schur-finite.
\end{cor}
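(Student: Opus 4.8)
The plan is to combine the structural identification of $\dleqn$ from Proposition~\ref{dleqsingular} with the fact that the Schur-finite objects constitute a thick subcategory. Concretely, recall from Proposition~\ref{dleqsingular}(2) that, under the resolution-of-singularities hypothesis, $\dleqn = D_{\leq n}$, the thick subcategory of $\DM_{gm}$ generated by the motives $M(Y)$ of \emph{smooth projective} varieties $Y$ with $\dim Y \leq n$. By our standing hypothesis, every such generator $M(Y)$ is Schur-finite.

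Next I would note that the full subcategory of $\DM_{gm}$ whose objects are Schur-finite is thick: it is stable under shifts and cones by the two out of three property of Proposition~\ref{2of3dm} (applied in $\DM$, hence in $\DM_{gm}$ via the fully faithful {\qtf} $\DM_{gm}\to\DM$), and it is stable under direct summands because $S_\lambda(M'\oplus M'')$ contains $S_\lambda(M')$ as a summand by \ref{sfgen}(3); this is precisely the content of \ref{sfthick} (and \ref{schurdual}). Consequently the thick subcategory $D_{\leq n}$, being generated by the Schur-finite objects $M(Y)$, is contained in the Schur-finite locus: every object of $\dleqn = D_{\leq n}$ is Schur-finite.

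Finally, by Proposition~\ref{dleqsingular}(1), for any variety $X$ with $\dim X\leq n$ one has $M(X)\in\dleqn$, and therefore $M(X)$ is Schur-finite; the same reasoning applies to $M^c(X)$. The only place where anything substantive happens is the assertion that ``Schur-finite'' cuts out a thick subcategory, i.e.\ the two out of three property of Proposition~\ref{2of3dm} together with summand-closure from \ref{sfgen}; the rest is bookkeeping with the generation statement of Proposition~\ref{dleqsingular}. I do not expect a genuine obstacle here — the one point requiring care is to stay inside $\DM_{gm}$ and use the fully faithful embedding into $\DM$ so that Proposition~\ref{2of3dm} is literally available.
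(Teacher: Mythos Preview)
Your proposal is correct and follows exactly the route the paper intends: the paper records this corollary as an immediate consequence of Proposition~\ref{dleqsingular} together with Proposition~\ref{2of3dm} (i.e.\ Corollary~\ref{sfthick}), and your argument spells out precisely that combination. No differences worth noting.
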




\subsection{An example of a motive which is not Kimura-finite}

This subsection is based on a private communication from O'Sullivan.
We will show that there is a smooth surface $U$ whose motive is Schur-finite
but not Kimura-finite.

\begin{thm}\label{nkfmot}
  (O'Sullivan) Let $X_0$ be a connected, smooth, and projective
  surface over an algebraically closed field $k_0$ such that $q=0$ and
  $p_g>0$.  Let $k=k_0(X_0)$ be the function field of $X_0$ and let
  $x_0$ be a $k_0$-point of $X_0$. Let $z$ be the zero-cycle which is
  the pullback of the cycle $\Delta(X_0)-(x_0\times X)$ along
  $X_0\times k\to X_0\times X_0$, and write $Z$ for the support of
  $z$.  Let $U$ be the complement of $Z$ in $X=X_0\times k$.  Then
  $M(U)$ is not Kimura-finite.
\end{thm}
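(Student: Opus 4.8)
The plan is to argue by contradiction, combining O'Sullivan's choice of $X_0$ (with $q=0$, $p_g>0$) with Kimura's theorem that nilpotent-or-cohomologically-detected behaviour forces every endomorphism of a Kimura-finite motive to be semisimple modulo nilpotents. First I would unwind the geometry: the cycle $z$ is, by construction, a $k$-rational zero-cycle on $X = X_0 \times_{k_0} k$ supported on the ``diagonal point'' minus the point $x_0$, so that removing $Z$ produces a smooth (non-proper) surface $U$ over $k$ whose motive sits in a Gysin triangle
\[ M(U) \rTo M(X) \rTo M^c(Z)^*(2)[4] \rTo M(U)[1] \]
relating $M(U)$ to the proper surface motive $M(X)$ and to the motive of the finite set $Z$ (a sum of twisted Artin motives). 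The point of passing to the generic point $k = k_0(X_0)$ is that the generic cycle $z$ realizes, on $CH_0(X)$, an ``interesting'' correspondence — essentially the generic point of $X_0$ itself — which is non-torsion precisely because $p_g > 0$, by Mumford's theorem on the infinite-dimensionality of $CH_0$ for surfaces with nonzero geometric genus.

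Next I would assume $M(U)$ \emph{is} Kimura-finite and derive a contradiction. The key is to produce an endomorphism $\phi$ of (a summand of) $M(U)$, built from the correspondence class of $z$, which is neither nilpotent nor detected by any Weil cohomology, contradicting \cite[]{kimura}'s structure theorem for Kimura-finite motives (every endomorphism is a sum of a semisimple-detected-by-cohomology part and a nilpotent part; equivalently the ideal of cohomologically trivial endomorphisms is nil). Concretely: the class of $z$, viewed through the Gysin triangle above and the transition maps $M(U)\to M(X)$, gives a map whose image in homology vanishes (because $z$ has degree $0$ and, $q=0$ forcing $h^1=0$, the transcendental part of $X$ carries no algebraic classes visible to the Weil cohomology it was built to kill) yet which acts non-trivially on $CH_0(U_{\bar k})$ because Mumford's argument shows the generic zero-cycle is not rationally equivalent to a constant. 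A cohomologically-trivial-but-not-nilpotent endomorphism cannot exist on a Kimura-finite object — this is exactly the obstruction Kimura identified — so $M(U)$ is not Kimura-finite. I would also invoke \ref{dmevenodd} / the two-out-of-three property \ref{2of3dm} to keep track of even/odd pieces and to isolate the relevant summand (the $\Lambda$- and $Sym$-obstructions live on the transcendental $2$-motive of $X$).

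The main obstacle — and the heart of O'Sullivan's argument — is the second step: producing the concrete cohomologically-trivial non-nilpotent endomorphism and verifying both of its properties rigorously in $\DM$. Checking cohomological triviality is the routine half (it follows from $\deg z = 0$, from $q = 0$ killing $H^1$, and from the Künneth formula), but checking non-nilpotence requires the full force of Mumford-style infinite-dimensionality: one must show that iterating the correspondence on zero-cycles of $X_{\overline{k}}$ (equivalently, working over successive function-field extensions, which is why the ground field was taken to be $k_0(X_0)$ rather than $k_0$) never stabilizes, which uses $p_g > 0$ in an essential way via the regular $2$-forms pulling back non-trivially under the generic correspondence. Once non-nilpotence is in hand, Schur-finiteness of $M(U)$ follows formally from \ref{dleqsingular}, \ref{2of3dm} and the fact that $M(X)$ and $M(Z)$ are Schur-finite (indeed the motive of any smooth projective surface is Schur-finite by the homological/numerical considerations in \ref{homschur} together with \ref{mtensl}), so $M(U)$ is the desired Schur-finite but non-Kimura-finite motive.
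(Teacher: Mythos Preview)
Your plan diverges from the paper's proof at the crucial step, and the divergence is a genuine gap rather than an alternative route.

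The paper does \emph{not} try to build a cohomologically trivial, non-nilpotent endomorphism of $M(U)$ and invoke Kimura's nilpotence theorem. Instead it applies the realization functor $R\Gamma$ into the derived category $\calD$ of $l$-adic sheaves over $\Spec k$, which carries a $t$-structure with Tannakian heart. In $\calD$ the image of the triangle
\[
\oplus_{n-1}\Lef^{\tens 2}[-1]\rTo M(U)\rTo \I\oplus h_2(X)\rTo^{\partial}\oplus_{n-1}\Lef^{\tens 2}
\]
has both outer terms even, and the key Lemma~\ref{os} (stated, not proved, in the paper) says: in such a triangle, if the cone $C$ is Kimura-finite and $H^*_\tau f=0$ then $f=0$. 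One checks $H^*_\tau R\Gamma(\partial)=0$ for formal reasons, and then shows $R\Gamma(\partial)\neq 0$ directly in $\calD$ by exhibiting, via the hypothesis $p_g>0$, a nonzero map $H^2(X,\Q_l)\to H^2(\Spec k,\Q_l)$ that factors through it. Contraposing Lemma~\ref{os} gives that $R\Gamma(M(U))$, hence $M(U)$, is not Kimura-finite.

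Your proposal instead looks for an \emph{endomorphism} of $M(U)$ manufactured from the class of $z$. But $z$ naturally gives the boundary map $\partial$ between two \emph{different} objects, not a self-map of $M(U)$; there is no evident way to turn it into an element of $\End(M(U))$ to which Kimura's nilpotence theorem applies. This is precisely why Lemma~\ref{os} is needed: it is a triangle-level substitute for the nilpotence theorem, and it requires passing to a category with a Tannakian $t$-structure --- a structure $\DM$ itself does not possess. Likewise, the non-vanishing step is not Mumford-style infinite-dimensionality of $CH_0$ acting on Chow groups; it is an $l$-adic computation using $p_g>0$ through the Leray/K\"unneth map $H^2(X_0)\otimes H^2(X_0)\to H^2(X_0)\otimes H^2(\Spec k)$. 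Your sketch of ``iterating the correspondence on zero-cycles'' has no clear target in the actual argument.

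Finally, two smaller points. The theorem as stated asserts only non-Kimura-finiteness; Schur-finiteness of $M(U)$ is the content of the subsequent Corollary~\ref{sfnotkfmot} and requires the additional hypothesis that $X_0$ be a Kummer surface (so that $M(X_0)$ is known to be Kimura-finite by \cite{gp2}). Your parenthetical claim that ``the motive of any smooth projective surface is Schur-finite by the homological/numerical considerations in \ref{homschur}'' is not correct: \ref{homschur} concerns $\calM_h$ and $\calM_n$, not $\calM_r$ or $\DM$, and Schur-finiteness of arbitrary smooth projective surfaces in $\DM$ is open.
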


Let $X$ be any connected, smooth, and projective surface. Let $Z$ be a
subset of $n$ $k$-rational points on $X$, and let $U=X-Z$. Then from
\cite[p.\ 196]{tri}, we have a distinguished triangle
\[ M(Z)(2)[3]\to M(U)\to M(X)\to M(Z)(2)[4]\]
The motive of $X$ decomposes as
$M(X)=\I\oplus h_1(X)\oplus h_2(X)\oplus h_3(X)\oplus \Lef^{\tens 2}$ and it
is known that the composite $\Lef^{\tens 2}\to M(X)\to M(Z)(2)[4]\isom
\oplus_n \Lef^{\tens 2}$ is the diagonal map, so we may split off one copy of
$\Lef^{\tens 2}$. This yields the following
triangle:
\[ \oplus_{n-1}\Lef^{\tens 2}[-1]\to M(U)\to 
\I\oplus h_1(X)\oplus h_2(X)\oplus h_3(X) \to \oplus_{n-1}\Lef^{\tens 2}.\]
If $q=0$ in $X$, the Picard and the Albanese varieties vanish, and then
$h_1(X)=h_3(X)=0$. So we have:
\begin{equation}\label{ostri}
\oplus_{n-1}\Lef^{\tens 2}[-1]\rTo M(U)\rTo 
\I\oplus  h_2(X)
\rTo^\partial \oplus_{n-1}\Lef^{\tens 2}.
\end{equation}

We need the following technical lemma to prove \ref{nkfmot}.

\begin{lem}\label{os}
(O'Sullivan) Let $\calD$ be a $\Q$-linear rigid tensor triangulated
category with $t$-structure and associated cohomological functor
$H^*_\tau$ with Tannakian heart and let
\[ A\rTo^f B\rTo C\rTo A[1]\]
be a distinguished triangle. Suppose that $A$ and $B$ are both even
(or both odd) and that $C$ is Kimura-finite. Then if $H^*_\tau f=0$
then $f=0$ in $\calD$.
\end{lem}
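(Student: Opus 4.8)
The plan is to exploit the fact that in a rigid tensor triangulated category with a $t$-structure whose heart is Tannakian, the hypothesis "$A$ and $B$ both even, $C$ Kimura-finite" forces very strong numerical constraints on the cohomology objects, and then to use that $H^*_\tau f = 0$ together with these constraints to deduce that $f$ factors through an object that must vanish.

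First I would unwind what "even" means cohomologically. Since $H^*_\tau$ is a cohomological functor to a Tannakian category and the heart is Tannakian (hence all its objects are finite-dimensional in the Tannakian sense), an object $M$ with $\Lambda^m M = 0$ has the property that each cohomology object $H^i_\tau(M)$ is a finite-dimensional object annihilated by a sufficiently large exterior power; in the Tannakian heart this means $\dim H^i_\tau(M)$ is bounded, and more importantly $M$ being even bounds the total dimension $\sum_i \dim H^i_\tau(M)$. The key point to extract is: an even (resp. odd) object has only finitely many nonzero $H^i_\tau$, and the "rank" in each degree is finite. Then from the long exact sequence
\[ \cdots \to H^i_\tau(A) \xrightarrow{H^i_\tau f} H^i_\tau(B) \to H^i_\tau(C) \to H^{i+1}_\tau(A) \to \cdots \]
and the hypothesis $H^*_\tau f = 0$, I get short exact sequences $0 \to H^i_\tau(B) \to H^i_\tau(C) \to H^{i+1}_\tau(A) \to 0$ in the heart for every $i$.

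Next I would use these short exact sequences together with the Kimura-finiteness of $C$. A Kimura-finite object whose cohomology objects are extensions (in the Tannakian heart) of cohomology of $A[1]$ by cohomology of $B$ — where $A$ and $B$ are both even, say — has cohomology objects that are themselves subquotients built from even pieces in adjacent degrees. The crucial numerical observation is that the parity constraint "even $\oplus$ even sitting inside Kimura-finite, assembled with a degree shift" is incompatible with $f$ being nonzero unless $f$ already vanishes: concretely, because $A$ and $B$ are both even (not of opposite parity), the cone $C$ of a nonzero $f$ would have to be "mixed" in a way that contradicts Kimura-finiteness — this is exactly the phenomenon behind Example \ref{dgalg} and Example \ref{sfnotkf}, where $\I \oplus \bar\I$ fails to be Kimura-finite as an indecomposable. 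So the strategy is: if $f \neq 0$, then since $H^*_\tau f = 0$, the map $f$ is "phantom" at the level of cohomology, and the cone $C$ receives a copy of $B$ and maps onto $A[1]$ with these being glued non-splitly; but a non-split such gluing of two objects of the same parity produces an indecomposable summand of $C$ that is neither even nor odd, contradicting Kimura-finiteness of $C$ (using that Kimura-finite objects have a decomposition $C_+ \oplus C_-$ and each indecomposable summand lands in one of the two).

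The main obstacle I anticipate is making the last step — "non-split gluing of two same-parity objects is not Kimura-finite" — precise at the level of the triangulated category rather than in an abelian category; in $\Ch^b$ of a field one has Example \ref{sfnotkf} as a model, but here one must argue inside $\calD$ using only the $t$-structure and rigidity. I expect the right tool is: if $f \neq 0$ but $H^*_\tau f = 0$, pick the lowest degree $i_0$ where $f$ is "detected" by the $t$-structure filtration, truncate $A$ and $B$ to reduce to the case where $A$ and $B$ are each concentrated in a single cohomological degree (objects of the heart up to shift), and then $C$ is an extension in $\calD$ of $A[1]$ by $B$ which, because $H^*_\tau f = 0$, is the trivial extension in cohomology but potentially a nontrivial extension in $\calD$; computing $\Lambda^m C$ and $Sym^m C$ via the splitting principle \ref{sfgen}(3) applied to the cohomology and using that $C$ is Kimura-finite then forces $i_0$ not to exist. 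I would organize the write-up so that the bulk of the work is this reduction-to-the-heart step, with the parity bookkeeping handled by \ref{sfgen} and \ref{dmevenodd}/\ref{2of3evenodd}-style arguments.
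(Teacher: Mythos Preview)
The paper does not actually prove this lemma: it is stated without proof, attributed to O'Sullivan, and the surrounding subsection is explicitly ``based on a private communication from O'Sullivan''. So there is no argument in the paper to compare yours against.

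As to your sketch on its own merits: the broad direction --- argue by contradiction that a nonzero $f$ with $H^*_\tau f=0$ forces the cone $C$ to be a non-split ``mixture'' as in Examples \ref{dgalg} and \ref{sfnotkf}, contradicting Kimura-finiteness --- is plausible, but there are real gaps. First, a slip: you describe the cone as a ``non-split gluing of two objects of the same parity'', but if $A$ and $B$ are both even then $A[1]$ is \emph{odd} (the Koszul sign rule gives $Sym^n(A[1])\cong(\Lambda^n A)[n]=0$), so $C$ sits between the even $B$ and the odd $A[1]$ --- opposite parities, which is in fact exactly the configuration of the examples you invoke. Second, and more seriously, the step you yourself flag as the obstacle --- showing in $\calD$ that such a non-split cone cannot be Kimura-finite --- is essentially the entire content of the lemma, and your proposed truncation-to-the-heart reduction does not obviously go through: $t$-truncations need not preserve evenness or oddness, and your plan to ``compute $\Lambda^m C$ and $Sym^m C$ via \ref{sfgen}(3)'' presupposes a direct-sum decomposition of $C$ that is precisely what you lack. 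Third, you never use the hypothesis that the heart is \emph{Tannakian} rather than merely super-Tannakian; since every object of a Tannakian category is even, this hypothesis is what ties parity in $\calD$ to the grading coming from $H^*_\tau$, and any complete argument must exploit it somewhere --- your outline does not say where.
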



\begin{proof}[Proof of \ref{nkfmot}.]
Let $\calD$ be the derived category of $l$-adic sheaves over $\Spec k$ (see
\cite{ekedahl}). There is a {\qtf} $R\Gamma$ from $\DM$ to $\calD$
associated to $l$-adic cohomology (see \cite{huber} and \cite{hubercorrigendum}).
If we prove that the image of a
motive in $\calD$ is not Kimura-finite, it will prove that the motive
itself is not Kimura-finite in $\DM$.

It is known that $\calD$ has a t-structure whose heart is the
(Tannakian) category of $l$-adic sheaves. Moreover, $l$-adic
cohomology is the composition of $R\Gamma$ with the cohomological
functor $H_\tau^*$ associated to this t-structure.  Consider the image
of triangle (\ref{ostri}) in $\calD$. It is known that $\partial$
induces the zero map on $l$-adic cohomology, i.e.,
$H_\tau^*R\Gamma(\partial)=0$. Therefore we only need to show
that $f=R\Gamma(\partial)\not=0$ and apply 
lemma \ref{os}, to prove that $M(U)$ is not
Kimura-finite in $\DM$.

To prove that $R\Gamma(\partial)\not=0$ we proceed as follows. 
Since $p_g>0$, the map $H^2(X,\Q_l)\to H^2(\Spec k,\Q_l)$
is not zero, and so is the composition
\[\varphi: \Q_l(-2)\to H^2(X,\Q_l)\tens H^2(X,\Q_l)\to H^2(X,\Q_l)\tens
H^2(\Spec k,\Q_l)\to H^4(X,\Q_l).\]
But $\mathrm{Hom}_{Vect_{\Q_l}}(\Q_l(-2), H^4(X,\Q_l))\isom
\mathrm{Hom}_\calD(R\Gamma(\Lef^{\tens 2}),R\Gamma(M(X)))$ and therefore $\varphi$
defines a non-zero map $\psi:R\Gamma(\Lef^{\tens 2})\to R\Gamma(M(X))$.
Since $\psi$ factors through $j:R\Gamma(\oplus_n \Lef^{\tens 2})\to R\Gamma(M(X))$,
we have that $j\not=0$. Thus $R\Gamma(\partial)\not=0$.
\end{proof}

\begin{cor}\label{sfnotkfmot}
  Let $X_0$ be a Kummer surface in \ref{nkfmot}. Then, using the
  notations of \ref{nkfmot}, $M(U)$ is Schur-finite but it is not
  Kimura-finite. Moreover, the two out of three property does not
  hold for Kimura-finiteness.
\end{cor}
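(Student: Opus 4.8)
The plan is to read everything off the distinguished triangle (\ref{ostri}) attached to $U=X-Z$, by checking that its two outer terms are Kimura-finite while the middle term $M(U)$ is not. A Kummer surface is a $K3$ surface, hence has $q=0$ and $p_g=1>0$, so it satisfies the hypotheses of \ref{nkfmot}; thus \ref{nkfmot} already gives that $M(U)$ is not Kimura-finite, and it only remains to prove that $M(U)$ is Schur-finite and that the outer terms of (\ref{ostri}) are Kimura-finite. The failure of the two out of three property will then be exhibited by (\ref{ostri}) itself.

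First I would establish that $M(X_0)$ is Kimura-finite; this is where the Kummer hypothesis is used. Indeed $X_0$ is the minimal resolution of $A/\{\pm 1\}$ for an abelian surface $A$ over $k_0$, obtained by blowing up the sixteen nodes, so $M(X_0)\isom N\oplus\Lef^{\oplus 16}$, where $N$ is the image of the projector $\tfrac{1}{2}(1+[-1]_*)$ on $M(A)$. By \cite{kimura}, $M(A)$ is Kimura-finite and the subcategory of Kimura-finite objects is closed under direct summands; hence $N$, and therefore $M(X_0)$, is Kimura-finite. Base change along $k_0\hookrightarrow k=k_0(X_0)$ is a $\Q$-linear tensor functor, so it carries the Kimura decomposition of $M(X_0)$ to one of $M(X)=M(X_0\times_{k_0}k)$ (tensor functors commute with $Sym^n$ and $\wedge^n$, as in \ref{fssf}); in particular its direct summand $\I\oplus h_2(X)$ (recall $h_1(X)=h_3(X)=0$ because $q=0$) is Kimura-finite.

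For the remaining outer term $\oplus_{n-1}\Lef^{\tens 2}[-1]$: the switch acts as the identity on $\Lef\tens\Lef$ (cf. \ref{motpn}), so $\Lef$, hence $\Lef^{\tens 2}$, hence $\oplus_{n-1}\Lef^{\tens 2}$, is even; and $\oplus_{n-1}\Lef^{\tens 2}[-1]=\big(\oplus_{n-1}\Lef^{\tens 2}\big)\tens\I[-1]$, being the tensor product of an even object with the odd object $\I[-1]$, is odd, in particular Kimura-finite (this follows from \ref{sfgen}(4); cf. also \cite{kimura}). Now both outer terms of (\ref{ostri}) are Kimura-finite, hence Schur-finite by \ref{kfsf}, so $M(U)$ is Schur-finite by \ref{2of3dm}. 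Since $M(U)$ is not Kimura-finite, the triangle (\ref{ostri}) simultaneously shows that the two out of three property fails for Kimura-finiteness. The only non-formal inputs are that Kummer (equivalently, abelian) surfaces have Kimura-finite motives and that Kimura-finiteness passes to direct summands; I expect correctly invoking these to be the crux, the rest being bookkeeping with (\ref{ostri}), \ref{2of3dm}, and the behaviour of even and odd objects under shifts and tensor functors.
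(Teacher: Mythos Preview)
Your proof is correct and follows the paper's strategy: check that a Kummer surface satisfies the hypotheses of \ref{nkfmot}, show that $M(X)$ is Kimura-finite so that both outer terms of (\ref{ostri}) are Kimura-finite, and conclude via \ref{2of3dm}. The only difference is that the paper obtains the Kimura-finiteness of $M(X_0)$ by citing \cite[Theorem~11~(ii)]{gp2}, whereas you supply a self-contained argument via the construction of $X_0$ from an abelian surface; both routes lead to the same conclusion, yours being more explicit at the cost of invoking the standard facts that motives of abelian varieties are Kimura-finite and that Kimura-finiteness passes to direct summands.
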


\begin{proof}
The scheme $X_0$ satisfies the condition of theorem \ref{nkfmot}, therefore
$M(U)$ is not Kimura-finite. However, from \cite[Theorem 11 (ii)]{gp2}, 
$M(X_0)$ is Kimura-finite, and so is $M(X)$.
We conclude by applying \ref{2of3dm} to (\ref{ostri}).
\end{proof}

\nocite{orange}

\bibliographystyle{amsalpha}
\bibliography{paper}




















\end{document}